\documentclass[12pt,reqno]{amsart}
\usepackage{amsmath,amssymb,amsthm,calc,verbatim,tikz,url,hyperref,cite,fullpage}
\usepackage{bbm}
\usepackage{setspace}
\usepackage{amssymb}
\usepackage{amsthm}
\usepackage{amsmath}
\usepackage{latexsym}
\usepackage{color}

\renewcommand\labelenumi{(\arabic{enumi})}
\renewcommand\theenumi\labelenumi

\newtheoremstyle{case}{}{}{\normalfont}{}{\itshape}{:}{ }{}
 

\newtheorem{thm}{Theorem}[section]
\newtheorem{lemma}[thm]{Lemma}

\newtheorem{obs}[thm]{Observation}

\theoremstyle{definition}

\newtheorem{alg}[thm]{Algorithm}

\newtheorem*{qu*}{Question}
\theoremstyle{remark}

\newtheoremstyle{case}{}{}{\normalfont}{}{\itshape}{\normalfont:}{ }{}

\theoremstyle{case}

\numberwithin{thm}{section}


\newcommand{\rank}{\textnormal{rk}}

\newcommand{\adj}{\textnormal{adj}}


\newcommand\N{\mathbb{N}}
\newcommand\R{\mathbb{R}}
\newcommand\Z{\mathbb{Z}}
\newcommand\F{\mathbb{F}}
\newcommand\Ex{\mathbb{E}}

\newcommand\cC{\mathcal{C}}

\newcommand\cE{\mathcal{E}}

\newcommand\cU{\mathcal{U}}
\newcommand\cV{\mathcal{V}}
\newcommand\cX{\mathcal{X}}

\renewcommand\Pr{\mathbb{P}}

\renewcommand\leq{\leqslant}
\renewcommand\geq{\geqslant}
\renewcommand\le{\leqslant}
\renewcommand\ge{\geqslant}
\renewcommand\to{\rightarrow}
\newcommand\ds{\displaystyle}

\pagestyle{plain}

	\def\R{\mathbb{R}}
	\def\Z{\mathbb{Z}}

	\def\N{\mathbb{N}}

	\def\0{\mathbf{0}}

	\def\<{\langle }
	\def\>{\rangle }

\begin{document}

\title{On the singularity of random symmetric matrices}
\author{Marcelo Campos \and Let\'icia Mattos \and Robert Morris \and Natasha Morrison}

\address{IMPA, Estrada Dona Castorina 110, Jardim Bot\^anico, Rio de Janeiro, 22460-320, Brazil}\email{marcelo.campos|leticiamat|rob@impa.br}

\address{Department of Mathematics and Statistics, University of Victoria, David Turpin Building, 3800 Finnerty Road, Victoria, B.C., Canada V8P 5C2} \email{nmorrison@uvic.ca}

\thanks{The first and fourth authors were supported by CNPq, the second author was supported by CAPES, and the third author was partially supported by CNPq (Proc.~303275/2013-8) and FAPERJ (Proc.~201.598/2014).}

\begin{abstract}
A well-known conjecture states that a random symmetric $n \times n$ matrix with entries in $\{-1,1\}$ is singular with probability $\Theta\big( n^2 2^{-n} \big)$. In this paper we prove that the probability of this event is at most $\exp\big( - \Omega( \sqrt{n} ) \big)$, improving the best known bound of $\exp\big( - \Omega( n^{1/4} \sqrt{\log n} ) \big)$, which was obtained recently by Ferber and Jain. The main new ingredient is an inverse Littlewood--Offord theorem in $\Z_p^n$ that applies under very mild conditions, whose statement is inspired by the method of hypergraph containers.
\end{abstract}

\maketitle

\section{Introduction}

{\setstretch{1.12}

Let $A_n$ denote a (uniformly-chosen) random $n \times n$ matrix with entries in the set $\{-1,1\}$. An old and notorious conjecture (see, for example, the discussion in~\cite{KKSz}) states that the probability that $\det(A_n) = 0$ is asymptotically equal to the probability that two of the rows or columns of $A_n$ are equal (up to a factor of $\pm 1$), and hence is equal to $\big(1 + o(1) \big) n^2 2^{-n+1}$. The~first progress on this conjecture was made in 1967, by Koml\'os~\cite{K67}, who used Erd\H{o}s' celebrated solution~\cite{E45} of the Littlewood--Offord problem (see below) to deduce that $A_n$~is singular with probability at most $O(n^{-1/2})$. However, the first exponential bound on the probability was only obtained in 1995, by Kahn, Koml\'os and Szemer\'edi~\cite{KKSz}. Following improvements in the late 2000s by Tao and Vu~\cite{TV07} and by Bourgain, Vu and Wood~\cite{BVW}, a major breakthrough was made recently by Tikhomirov~\cite{T18}, who proved that
$$\Pr\big( \det(A_n) = 0 \big) = \bigg( \frac{1}{2} + o(1) \bigg)^n.$$

In this paper we will consider the analogous problem for random \emph{symmetric} $\pm 1$ matrices, for which significantly less is known. As in the case of $A_n$, it is natural to conjecture that such a matrix is singular with probability $\Theta\big( n^2 2^{-n} \big)$; however, it turns out to be extremely difficult even to prove that $M_n$ is invertible with high probability as $n \to \infty$. The latter problem was apparently first posed by Weiss in the early 1990s (see~\cite{CTV}), but only resolved in 2005, by Costello, Tao and Vu~\cite{CTV}, who proved that 
\begin{equation}\label{eq:CTV:bound}
\Pr\big( \det(M_n) = 0 \big) \le n^{-1/8 + o(1)},
\end{equation}
where we write $M_n$ for a (uniformly-chosen) random $n \times n$ symmetric matrix with entries in the set $\{-1,1\}$. The first super-polynomial bound on the probability that $M_n$ is singular, and the first exponential-type bound (i.e., of the form $\exp( -n^c )$ for some $c > 0$), were obtained almost simultaneously, by Nguyen~\cite{N12} and Vershynin~\cite{V14}, respectively. We remark that the proof in~\cite{N12} was based on earlier work of Nguyen and Vu~\cite{NV11}, which relied on deep results from additive combinatorics, while the proof in~\cite{V14} built on the earlier breakthroughs of Rudelson and Vershynin~\cite{RV08,RV09}.

Recently, a new `combinatorial' approach to studying the invertibility of discrete random matrices was introduced by Ferber, Jain, Luh, and Samotij~\cite{FJLS}, and applied by Ferber and Jain~\cite{FeJ} to prove that 
$$\Pr\big( \det(M_n) = 0 \big) \le \exp\big( - c n^{1/4} \sqrt{\log n} \big)$$
for some $c > 0$. In this paper we use a different combinatorial approach (inspired by the method of~\cite{FJLS,FeJ}) to obtain the following bound.

\begin{thm}\label{thm:main}
There exists $c > 0$ such that 
\begin{equation}\label{eq:main:thm:bound}
\Pr\big( \det(M_n) = 0 \big) \le \exp\big( - c \sqrt{n} \big)
\end{equation}
for all sufficiently large $n \in \N$.
\end{thm}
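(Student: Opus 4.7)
The plan is to follow the standard structured-versus-unstructured dichotomy for kernel vectors, but with the key new ingredient being a hypergraph-container style inverse Littlewood--Offord theorem in $\F_p^n$. First, I would reduce the problem to bounding
\[
\Pr\bigl(\exists v \in \F_p^n \setminus \{0\} : M_n v \equiv 0 \pmod p\bigr)
\]
for a large prime $p$ (say $p$ of order $\exp(\sqrt{n})$). Singularity over $\R$ forces singularity over $\F_p$, so one only loses the negligible event that $p$ divides an otherwise nonzero integer determinant. For each nonzero $v \in \F_p^n$, classify it by its Littlewood--Offord concentration function
\[
\rho(v) \,=\, \max_{r \in \F_p} \, \Pr_{\xi}\Bigl(\sum_{i=1}^n \xi_i v_i = r\Bigr),
\]
where $\xi_i$ are i.i.d.\ uniform on $\{-1,1\}$. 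Compressible vectors (those close in Hamming distance to a sparse vector) are disposed of by a standard net argument, as in~\cite{V14,FeJ}; the remaining ``incompressible'' range is partitioned dyadically by the value of $\rho(v)$.

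For each dyadic level $\rho(v) \asymp \eps$, the heart of the argument is the inverse Littlewood--Offord theorem in container form: the set of $v \in \F_p^n$ with $\rho(v) \ge \eps$ can be covered by a small family $\cC_\eps$ of containers, each of cardinality much smaller than the trivial Hal\'asz-type bound $\eps^{-n}$, and with $|\cC_\eps|$ itself bounded by $\exp(o(\sqrt{n}\cdot n))$. One builds this cover in the hypergraph-container style: the ``bad'' hypergraph on vertex set $\F_p^n$ has edges recording configurations whose presence forces concentration (tuples $(i_1, \dots, i_k)$ such that $v_{i_1} \pm \cdots \pm v_{i_k}$ lands in a popular set), and the container lemma then produces a small list of fingerprint-indexed containers. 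Obtaining the parameters sharp enough to deliver the final $\sqrt{n}$ exponent is the step I expect to be the main obstacle: one must beat the losses coming both from the symmetry of $M_n$ and from the container-encoding step.

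Finally, having the containers in hand, I would deal with the symmetry by a two-stage exposure. Reveal first the principal submatrix $M'$ of order $n - k$, where $k = \lfloor c \sqrt{n} \rfloor$, and then the remaining $k$ rows/columns. Conditional on $M'$, any $v$ in the kernel of $M_n$ whose last $k$ coordinates are nontrivial must satisfy $k$ linear equations whose randomness comes from fresh, mutually independent symmetric entries; by the anticoncentration at level $\rho(v)$ this event has probability at most $\rho(v)^{k - o(k)}$. Summing
\[
\sum_{\eps \text{ dyadic}} |\cC_\eps| \cdot \max_{C \in \cC_\eps} |C| \cdot \eps^{\,k}
\]
and optimising $k \asymp \sqrt{n}$ balances the container count against the anticoncentration savings and yields the bound $\exp(-c \sqrt{n})$. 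The compressible and ``no last-$k$ support'' cases are handled by routine modifications: in the latter one re-randomises which $k$ coordinates play the role of the final block, which by an averaging argument only costs a $\binom{n}{k}$ factor that is comfortably absorbed.
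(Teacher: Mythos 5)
Your proposal shares the high-level skeleton with the paper (work over $\F_p$ with $p\asymp\exp(\sqrt{n})$, handle compressible vectors by a net argument, obtain containers for the structured vectors via an inverse Littlewood--Offord theorem, union-bound over containers), but there is a genuine gap in the final counting step, and your diagnosis of where the $\sqrt{n}$ exponent comes from is wrong.

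The problem is the two-stage exposure with $k\asymp\sqrt{n}$. With only $k$ fresh rows revealed in the second stage, the best anticoncentration saving you can hope for is $\rho(v)^{k}$ (in fact, as you note, one only gets $\rho(v_{[n-k]})^{k}\ge\rho(v)^{k}$, which is weaker). This cannot possibly dominate the union bound. Take the densest level $\eps\asymp n^{-1/2}$: every $v\in\{-1,1\}^{n}$ has $\rho(v)\asymp n^{-1/2}$, so any container family covering this level satisfies $|\cC_\eps|\cdot\max_{C}|C|\ge 2^{n}/\mathrm{poly}(n)$, while $\eps^{k}=n^{-k/2}$. You then need $2^{n}\cdot n^{-k/2}\ll 1$, i.e.\ $k\gtrsim n/\log n$, which is nowhere near $\sqrt{n}$. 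The situation only gets worse at other dyadic levels. So the optimisation you propose, with $k\asymp\sqrt{n}$, cannot close.

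What the paper actually does is expose $\Theta(n)$ rows, not $\sqrt{n}$. The containers are subsets of $\Z_{p}$ (not of $\Z_{p}^{n}$) that cover $3/4$ of the coordinates of $v$; one multiplies, per revealed coordinate, the container size $|B_{i}|$ against the anticoncentration probability $\rho(v_{Y_{i}})$, and the theorem is engineered so that this product is $\lesssim n^{-1/4}$ per coordinate. Because a single application of the inverse theorem only covers $3/4$ of the coordinates, the construction is iterated $O(\log n)$ times (this is the role of the sets $Y$ and the algorithm building the ``fibres''), yielding a per-fibre probability of order $n^{-\Theta(n)}$, which beats the fibre count $n^{O(n)}$. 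The final $\sqrt{n}$ in Theorem~\ref{thm:main} does \emph{not} come from exposing $\sqrt{n}$ rows; it comes from the reduction lemma (Lemma~\ref{lem:sufftoshow}), which produces terms $\beta^{1/8}+q_{m}(\beta)/\beta$ with $\beta\asymp 1/p$, forcing $p$ to be chosen around $\exp(\sqrt{n})$ so that both terms balance at $\exp(-\Theta(\sqrt{n}))$. Your account has the bottleneck in the wrong place: the exposure step must be ``full strength'' (all rows, iterated), and the $\sqrt{n}$ is entirely an artifact of the symmetric-matrix reduction and the choice of prime. The discussion in Section~\ref{barrier:sec} of the paper makes this explicit: even with all the randomness and a full union bound one is stuck near $\exp(-\sqrt{n\log n})$, so restricting to $\sqrt{n}$ fresh rows is hopeless.

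Two smaller points. First, ``singularity over $\R$ forces singularity over $\F_p$'' is true but does not by itself give the needed reduction; for symmetric matrices the reduction to $q_{n}(\beta)$ is nontrivial and goes through the rank-drop/decoupling machinery of Costello--Tao--Vu, Nguyen and Ferber--Jain (Lemma~\ref{lem:sufftoshow}). Second, the independence of your $k$ new linear equations is fine when restricted to the fresh entries $m_{ij}$ with $i>n-k$, $j\le n-k$, so that part of your argument is sound; it just doesn't supply enough savings.
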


} 
{\setstretch{1.13}

The main new ingredient in our approach is an inverse Littlewood--Offord theorem (see Theorem~\ref{thm:containers}, below) which applies to vectors $v \in \Z_p^n$ that exhibit a very mild amount of `structure'. 
In order to motivate this theorem, let us begin by recalling the problem of Littlewood and Offord~\cite{LO43}, introduced in 1943 during their study of random polynomials. For any abelian group $G$, integer $n \in \N$, and vector $v \in G^n$, define
$$\rho(v) 
:= \max_{a \in G} \Pr\bigg( \sum_{i =1}^n u_i v_i = a \bigg),$$
where $u$ is a uniformly-chosen random element of $\{-1,1\}^n$. Littlewood and Offord~\cite{LO43} proved that $\rho(v) = O\big( |v|^{-1/2} \log |v| \big)$ when $G = \Z$, where $|v| := \big| \big\{ i \in [n]: v_i \neq 0 \big\} \big|$ denotes the size of the support of~$v$. Erd\H{o}s~\cite{E45} improved this to $\rho(v) = O\big( |v|^{-1/2} \big)$, which is best possible, using Sperner's theorem. The problem of obtaining upper bounds on $\rho(v)$ (and other related functions) has become known as the `Littlewood--Offord problem', and has been extensively studied over the past several decades, for example by Rogozin~\cite{R61}, S\'ark\H{o}zy and Szemer\'edi~\cite{SS65}, Esseen~\cite{E66}, Hal\'asz~\cite{H77}, and Frankl and F\"uredi~\cite{FF88}. More recently, Costello, Tao and Vu~\cite{CTV} proved a `quadratic' Littlewood--Offord inequality, and used it to deduce their bound~\eqref{eq:CTV:bound} on the probability that $M_n$ is singular. 

Inverse Littlewood--Offord theory, the study of the structure of vectors $v \in G^n$ such that $\rho(v)$ is (relatively) large, was initiated by Tao and Vu~\cite{TV09a}, and has since played an important role in the study of random matrices, see for example the seminal work of Rudelson and Vershynin~\cite{RV08,RV09}, Tao and Vu~\cite{TV09b}, Nguyen and Vu~\cite{N12,NV11}, and the surveys~\cite{NV13,RV10,Vu14}. Our inverse Littlewood--Offord theorem differs from (most of) these earlier results in several important ways: it is designed for $\Z_p$, rather than $\Z$; it gives (weak) structural information about every vector $v \in \Z_p^n$ such that $\rho(v) \ge 4/p$; and it is designed to facilitate iteration. In particular, the method of Nguyen and Vu (see, e.g.,~\cite[Theorem~8.1]{NV13}) requires that $\rho(v) \ge n^{-C}$, and those of Ferber, Jain, Luh, and Samotij~\cite[Theorem~1.7]{FJLS} and of Rudelson and Vershynin (see, e.g.,~\cite[Theorem~1.5]{RV08}) seem difficult to iterate. Let us note, however, that the powerful results of Rudelson and Vershynin~\cite{RV08,RV09} are applicable for $\rho(v) \ge e^{-cn}$, and are moreover valid over the real numbers. We remark that the statement of Theorem~\ref{thm:containers} was inspired by the method of hypergraph containers, a technique that was introduced several years ago by Balogh, Morris and Samotij~\cite{BMS} and (independently) Saxton and Thomason~\cite{ST}, and which has turned out to have a large number of applications in extremal and probabilistic combinatorics. We refer the interested reader to the survey~\cite{BMS18} for more details. 

Given a vector $v \in \Z_p^n$ and a subset $Y \subset [n]$, let us write $v_Y$ for the restriction of $v$ to the coordinates of $Y$. Recall that $|v|$ denotes the size of the support of~$v$. 
Our inverse Littlewood--Offord theorem is as follows.

\begin{thm}\label{thm:containers}
Let $p$ be a prime. There exists a family\/ $\cC$ of subsets of\/ $\Z_p$, with
\begin{equation}\label{eq:number:of:containers}
|\cC| \le \exp\Big( 2^{12} \big( \log p \big)^2 \Big),
\end{equation}
such that for each $n \in \N$, and every $v \in \Z_p^n$ with $\rho(v) \ge 4/p$ and $|v| \ge 2^{18} \log p$, there exist sets $B(v) \in \cC$ and $Y = Y(v) \subset [n]$, with $n/4 \le  |Y| \le n/2$, such that
\begin{equation}\label{eq:containers:properties}
\big| \big\{ i \in [n] : v_i \notin B(v) \big\} \big| \le \frac{n}{4} \qquad \text{and} \qquad |B(v)| \le \frac{2^{16}}{ \rho(v_Y) \sqrt{|v|}}.
\end{equation}
\end{thm}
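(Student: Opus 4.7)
My plan is to design an algorithmic procedure, in the spirit of the hypergraph container method, that assigns to each admissible vector $v \in \Z_p^n$ a short ``fingerprint'' $S(v) \subset \Z_p$ which then determines the container $B(v)$. Since the fingerprint is drawn from a universe depending only on $p$, the cardinality of $\cC$ is controlled by the number of possible fingerprints, independent of $n$. The fingerprint itself would be extracted by Fourier analysis on $\Z_p$.

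Concretely, starting from the standard bound
$$\rho(v) \le \frac{1}{p} \sum_{t \in \Z_p} \prod_{i=1}^n \big| \cos(\pi t v_i / p) \big|$$
together with the hypothesis $\rho(v) \ge 4/p$, I would isolate a set $T(v) \subset \Z_p$ of ``spectral'' frequencies at which the product is not too small. Using $|\cos(\pi x)| \le \exp(-c\|x\|^2)$ (with $\|x\|$ denoting the distance to the nearest integer), each $t \in T(v)$ satisfies $\sum_i \|t v_i/p\|^2 = O(\log |v|)$, so by Markov's inequality most coordinates $v_i$ lie in the arithmetic progression $A_t := \{x \in \Z_p : \|tx/p\| \le \delta\}$ for any fixed radius $\delta$. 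Greedily selecting a subset $S(v) \subset T(v)$ of size $k \le 2^{12}\log p$, I would define $B(v) := \bigcap_{t \in S(v)} A_t$; declaring $\cC$ to consist of all such Bohr-type intersections over $S \subset \Z_p$ with $|S| \le k$ and dyadic radii $\delta$, one obtains $|\cC| \le p^k \cdot O(\log p) \le \exp(2^{12}(\log p)^2)$. The coverage $|\{i : v_i \notin B(v)\}| \le n/4$ would follow from a union bound over $t \in S(v)$, each contributing at most $O(\log|v|/\delta^2)$ exceptional indices (by Chebyshev applied to the spectral inequality), after suitably tuning $k$ and $\delta$. Finally, I would take $Y = Y(v)$ to be any subset of $\{i : v_i \in B(v)\}$ (which has size at least $3n/4$) of the prescribed cardinality $|Y| \in [n/4, n/2]$, and invoke a Hal\'asz-type anticoncentration inequality for $v_Y$ supported in the Bohr-type set $B(v)$, yielding $\rho(v_Y) \le O(1)/\big(|B(v)| \sqrt{|v_Y|}\big)$; combining with $|v_Y| = \Omega(|v|)$ from the coverage gives the second bound in \eqref{eq:containers:properties}.

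\textbf{Main obstacle.} The sharpest step is the Hal\'asz-style upper bound on $\rho(v_Y)$. A na\"ive version fails when $v_Y$ happens to take only one value in $B(v)$, since then $\rho(v_Y) \asymp |v_Y|^{-1/2}$ can violate the claimed inequality when $|B(v)|$ is large. Resolving this likely requires either (i) showing that an averaged or random choice of $Y \subset \{i : v_i \in B(v)\}$ forces sufficient equidistribution of $v_Y$ over $B(v)$, or (ii) refining the definition of $B(v)$ so that it reflects the true generalized arithmetic structure of $v$'s support rather than merely a Bohr superset. Managing this trade-off -- between container complexity (to satisfy \eqref{eq:number:of:containers}) and sharp anticoncentration (to satisfy \eqref{eq:containers:properties}) -- is the crux of the theorem.
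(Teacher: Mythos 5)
Your high-level plan is in the right spirit -- a Fourier-analytic fingerprint of size $O(\log p)$ drawn from a $p$-dependent universe, Bohr-type containers, coverage by Markov/Chebyshev, and a Hal\'asz-type bound to control $|B(v)|$ -- and you correctly identify where the naive version breaks. But the obstacle you flag is in fact the heart of the theorem, and your sketch does not resolve it; the resolution requires an idea that is genuinely absent from your plan.

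Concretely, the step $\rho(v_Y) \le O(1)/\big(|B(v)|\sqrt{|v_Y|}\big)$ is simply false (as you say), and choosing $Y$ as an arbitrary subset of $\{i : v_i \in B(v)\}$ cannot repair it: you give up all control on how $v_Y$ distributes within $B(v)$. The paper's mechanism is different. It never bounds $\rho(v_Y)$ in terms of $|B(v)|$ directly. Instead, Hal\'asz gives $\rho(v_Y) \le \dfrac{2^{13}|T_\ell(v_Y)|}{p\sqrt{|v|}}$ where $T_\ell$ is a level set, a Parseval-type energy argument gives $|B(v)| \le \dfrac{4p}{|F(v_U)|}$ where $F(v_U)$ is a frequency set, and the crucial bridge is the comparison $|T_\ell(v_Y)| \le |T_{8\ell}(v)| \le 2|F(v_U)|$. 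Multiplying the two inequalities and using the bridge, the $p$'s cancel and one gets exactly $\rho(v_Y)\,|B(v)| \le 2^{16}/\sqrt{|v|}$, with the dangerous case (near-constant $v_Y$, $\rho(v_Y) \asymp |v|^{-1/2}$) handled automatically because then $T_\ell(v_Y)$ and hence $F(v_U)$ are of order $p$, forcing $|B(v)| = O(1)$. Setting up the bridge requires two probabilistic choices you don't have: (a) $Y$ is a random density-$3/8$ subset of $[n]$ (not a subset of $\{i : v_i \in B(v)\}$), and the Chernoff bound shows $T_\ell(v_Y) \subset T_{8\ell}(v)$ w.h.p.; (b) the fingerprint is not a set of frequencies $S(v) \subset \Z_p$ but a random set of \emph{coordinates} $U \subset [n]$ with $|U| \approx 2^{12}\log p$, with $B(v) := C(F(v_U))$ -- this is what lets one compare $F(v_U)$ to $T_{8\ell}(v)$ by a second-moment argument and still keeps $|\cC| \le p^{|U|}$. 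Your greedy frequency fingerprint, by contrast, gives no handle on the comparison between the frequency set entering the energy bound and the level set entering Hal\'asz, so the two bounds never mesh. In short: the skeleton is recognisable, but the load-bearing part -- the random $Y$ and random coordinate fingerprint $U$ that make the level-set/frequency-set comparison go through -- is missing, and without it the size bound in \eqref{eq:containers:properties} cannot be established.
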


In order to motivate the statement of the theorem above, it is instructive to consider the example of a vector $v \in \Z_p^n$ whose entries are chosen uniformly (and independently) at random from a $d$-dimensional generalised arithmetic progression\footnote{This is a set of the form $\big\{ a + j_1 \ell_1 + \cdots + j_d \ell_d : 1 \le j_i \le k_i \big\}$ for some $a,\ell_1,\ldots,\ell_d \in \Z_p$ and $k_1,\ldots,k_d \in \N$.} $Q$. For such a vector, $\rho(v)$ is typically of order $|Q|^{-1} n^{-d/2}$ (since in each dimension the random walk spreads out by a factor of roughly $\sqrt{n}$), and the $p^{\Theta(d)}$ such progressions are natural `containers' for these vectors. This example suggests that one might be able to prove a stronger version of Theorem~\ref{thm:containers}, in which most `containers' (members of the family $\cC$) are significantly smaller than the maximum given in~\eqref{eq:containers:properties}. However, without substantial additional ideas such a strengthening would \emph{not} imply a significant improvement over the bound in Theorem~\ref{thm:main}, see the discussion in Section~\ref{barrier:sec} for more details.

We remark that the sets $Y(v)$, whose appearance in Theorem~\ref{thm:containers} might appear somewhat unnatural at first sight, will play a vital role in our application of the theorem to prove Theorem~\ref{thm:main}. More precisely, we will use the sets $Y(v)$ to maintain independence as we reveal various rows and columns of the matrix, see Section~\ref{sketch:app:sec} for more details. Let us also mention here that the family of containers $\cC$ will be defined explicitly (see~\eqref{def:C}, below), but we will only need the properties stated in the theorem. The proof of Theorem~\ref{thm:containers} uses the probabilistic method (for those readers familiar with the container method, we choose the `fingerprint' randomly), and a classical  `anticoncentration lemma' proved by Hal\'asz~\cite{H77} (Lemma~\ref{lem:halasz}, below), see Section~\ref{sketch:containers:sec} for more details.

The rest of the paper is organised as follows: in Section~\ref{sec:overview} we give an overview of the proof, in Section~\ref{sec:containers} we prove Theorem~\ref{thm:containers}, and in Section~\ref{sec:iterate} we deduce Theorem~\ref{thm:main}. 
}

\section{An overview of the proof}\label{sec:overview}

In this section we will outline the proof of our inverse Littlewood--Offord theorem, and the deduction of Theorem~\ref{thm:main}. The first step is to apply the method of~\cite{CTV,FeJ,N12} to reduce the problem to bounding the quantity
\begin{equation}\label{eq:qdef}
q_n(\beta) := \max_{w \in \Z_p^n} \Pr\Big( \exists \, v \in \Z_p^n \setminus \{0\}: M_n \cdot v = w \text{ and } \rho(v) \ge \beta \Big),
\end{equation}
for some suitable $\beta = \exp\big( - \Theta( \sqrt{n} ) \big)$ and a prime $p = \Theta\big( 1 / \beta \big)$. To be precise, we will use the following lemma, which was proved by Ferber and Jain~\cite{FeJ} using techniques developed by Costello, Tao and Vu~\cite{CTV} and Nguyen~\cite{N12}. Note that the dependence of $q_n(\beta)$ on the prime $p$ is suppressed in the notation. 

\begin{lemma}\label{lem:sufftoshow}
Let $n \in \N$, and let $p > 2$ be prime. For every $\beta > 0$, 
$$\Pr\big( \det(M_n) = 0 \big) \le \, 16n \sum_{m = n-1}^{2n-3} \bigg( \beta^{1/8} + \frac{q_m(\beta)}{\beta} \bigg).$$
\end{lemma}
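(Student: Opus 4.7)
The plan is to follow the reduction scheme developed by Costello--Tao--Vu~\cite{CTV} and Nguyen~\cite{N12}, in the precise form worked out by Ferber and Jain~\cite{FeJ}. The idea is to split the event $\{\det(M_n) = 0\}$ according to whether some canonically chosen kernel vector of $M_n$ modulo $p$ has small or large concentration function $\rho$, and handle the two regimes by completely different mechanisms. Since $p > 2$, any nonzero integer vector in the kernel of $M_n$ reduces to a nontrivial element of the kernel over $\Z_p$, so it suffices to bound the probability that $M_n$ viewed over $\Z_p$ has a nontrivial kernel. We partition this event using such a canonical kernel vector $v$, according to whether $\rho(v) < \beta$ (the \emph{unstructured} case) or $\rho(v) \ge \beta$ (the \emph{structured} case).

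For the unstructured regime, the key input is the quadratic Littlewood--Offord inequality of Costello--Tao--Vu: if $v \in \Z_p^n$ is fixed with $\rho(v) \le \beta$ and $H$ is a random symmetric $\pm 1$ matrix, then $\Pr(Hv = 0) \le C\beta^{1/8}$ for an absolute constant $C$, up to polynomial factors in $n$. This is proved by a two-step exposure combined with decoupling: one partitions the coordinates into two halves, exposes first the block of $H$ involving one half, and observes that conditional on this information the remaining rows become linear equations in i.i.d.\ random variables whose anticoncentration is controlled by the $\rho$-value of a derived vector. A Chebyshev- or Cauchy--Schwarz-type inequality applied at each of the two exposures produces the characteristic exponent $1/8$. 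Summing over the polynomially many choices of the exposure contributes the term $n \cdot \beta^{1/8}$ to the final bound.

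For the structured regime, the goal is to relate the equation $M_n v = 0$ to an inhomogeneous equation $M_m \cdot v' = w$ to which the definition of $q_m(\beta)$ directly applies. The mechanism is a conditional exposure of a portion of the symmetric matrix: one reveals a set of rows (which by symmetry reveals the corresponding columns), so that the unexposed block is an independent random symmetric sub-matrix $M_m$, with $m$ in the range $n - 1 \le m \le 2n - 3$---the precise range reflecting the possible sizes of the complementary block once one accounts for the handful of extra rows needed to pin down $w$ and $v'$. Conditional on the exposed data, the restriction of $M_n v = 0$ to the unexposed rows becomes $M_m v'' = w$ with $w$ determined by the exposure. Since only $v''$ enters the remaining equation, and since we only know of the \emph{existence} of a $v$ with $\rho(v) \ge \beta$ rather than a specific $v$, passing to an explicit measurable $v'$ costs a factor $1/\rho(v) \le 1/\beta$ in the union bound (essentially, one sums over possible representatives of the kernel modulo those determined by the exposed rows). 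This yields the summand $q_m(\beta)/\beta$ for each $m$.

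The main obstacle is the careful bookkeeping needed to maintain independence under the two-stage exposure of a symmetric matrix, where revealing an off-diagonal entry $M_{i,j}$ simultaneously reveals $M_{j,i}$. Choosing the exposure so that the residual block $M_m$ is genuinely independent of the pair $(v', w)$ emerging from the first stage---and fitting this into the definition of $q_m(\beta)$---is the delicate step. Once the conditioning is correctly set up, the remaining estimates, including the constant $16n$ and the precise range $n - 1 \le m \le 2n - 3$, follow from a direct (if somewhat intricate) computation combining the two regimes via a union bound.
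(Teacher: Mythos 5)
Your high-level plan (reduce to $\Z_p$, split into structured vs.\ unstructured via $\rho$ of a kernel vector, use decoupling for the unstructured regime and the definition of $q_m(\beta)$ for the structured one) is pointing in the right general direction, but the concrete mechanisms you propose are off in ways that would cause the argument to fail, and you are missing the first, essential, reduction entirely.

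First, you cannot ``fix'' a kernel vector $v$ of $M_n$ and then apply an anticoncentration bound to $\Pr(Hv=0)$: the kernel vector $v$ is a function of the matrix, so it is not independent of $H=M_n$. This is precisely the central difficulty the Costello--Tao--Vu/Nguyen/Ferber--Jain machinery is designed to handle, and what they do instead is condition on the $(n-1)\times(n-1)$ minor $M_{n-1}$, identify a vector (a column of $\adj(M_{n-1})$ when $\rank(M_{n-1})=n-2$, or $z=M_{n-1}^{-1}w_J^*$ when $\rank(M_{n-1})=n-1$) that is \emph{determined by} $M_{n-1}$, and then express the singularity of $M_n$ as a linear or quadratic event in the fresh last row. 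Decoupling the quadratic case costs a $1/4$ power, and optimizing a partition $[n-1]=I\cup J$ with $|J|=k\approx\log_4(1/\beta)$ turns $2^k\beta+2^{-k}$ into roughly $\sqrt{\beta}$, whence $\beta^{1/8}$. Your attribution of $\beta^{1/8}$ to a ``quadratic Littlewood--Offord inequality applied to a fixed $v$'' conflates these steps; in fact for a genuinely fixed $v$ one would get a far better bound (of the type $2^{-n}$, as in the small-support lemma), and the hardness of the problem is precisely that $v$ is not fixed.

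Second, you skip the reduction to corank one. Before any structured/unstructured analysis, one must first show that $\Pr(\det(M_n)=0)$ is controlled by $\sum_m\Pr(\rank(M_m)=m-1 \text{ and } \rank(M_{m-1})\in\{m-2,m-1\})$ over $n\le m\le 2n-2$; this is Nguyen's rank-incrementing argument using Odlyzko's observation, and it is the source of both the polynomial prefactor and the range of $m$. Your explanation of the range (``the possible sizes of the complementary block\ldots plus a handful of extra rows'') does not reflect this. Likewise, the $q_m(\beta)/\beta$ term does not arise from ``summing over representatives of the kernel modulo the exposed rows''; the $\beta^{-1}$ comes from a union bound over the $3^{|J|}\approx\beta^{-1}$ possible right-hand sides $w\in\{-2,0,2\}^J$ of the equation $M_{m}z=w_J^*$ arising after decoupling. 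Without these specific mechanisms, the claimed $16n$, the exponent $1/8$, and the range $n-1\le m\le 2n-3$ are not actually derived by your outline.
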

 
Since Lemma~\ref{lem:sufftoshow} was not stated explicitly in~\cite{FeJ}, for completeness we provide the proof in Appendix~\ref{app:red}. Using our inverse Littlewood--Offord theorem (Theorem~\ref{thm:containers}), we will prove the following bound on $q_n(\beta)$. 

\begin{lemma}\label{thm:qrho}
Let $n \in \N$ be sufficiently large, and let\/ $p \le \exp\big( 2^{-10} \sqrt{n} \big)$ be prime. Then 
$$q_n(\beta) \le 2^{-n/4}$$
for every\/ $\beta \ge 4/p$.
\end{lemma}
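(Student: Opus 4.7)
The plan is to use Theorem~\ref{thm:containers} to give a tractable structural description of every vector $v \in \Z_p^n$ with $\rho(v) \ge \beta$, and then to union-bound over the resulting structural data, bounding $\Pr(Mv=w)$ for each fixed $v$ via anticoncentration of the random rows of $M$. First I would handle the `small support' case $|v| < 2^{18}\log p$ separately: since $\rho(v) \ge \beta = 4/p$ forces $|v| = O(1/\beta^2) \le O(p^2)$ via Erd\H{o}s's Littlewood--Offord inequality, there are only moderately many such $v$, and for each one a sequential row-by-row reveal of $M$ (placing the coordinates outside $\supp(v)$ first in the reveal order) gives $\Pr(Mv=w) \le C/\sqrt{|v|!}$, which is tiny; a direct union bound takes care of this case.

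For the main case $|v| \ge 2^{18}\log p$, I apply Theorem~\ref{thm:containers} to extract $B = B(v) \in \cC$ and $Y = Y(v) \subset [n]$ satisfying (\ref{eq:containers:properties}). To bound $\Pr(Mv=w)$ for any fixed $v$, partition $[n] = Y \cup Y^c$ and condition on the two diagonal blocks $M_{YY}$ and $M_{Y^c Y^c}$. The remaining randomness lies in the off-diagonal block $M_{Y, Y^c}$, whose $|Y|\cdot|Y^c|$ entries are independent; in particular, the $|Y^c|$ rows indexed by $Y^c$ are independent. Each equation $(Mv)_i = w_i$ for $i \in Y^c$ becomes $\sum_{j \in Y} M_{ij} v_j = c_i$ for a known $c_i$, and therefore
\[
\Pr(Mv = w) \le \rho(v_Y)^{|Y^c|}.
\]

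Union-bounding over $B \in \cC$ (at most $\exp(2^{12}(\log p)^2) \le \exp(n/256)$ choices, using $\log p \le 2^{-10}\sqrt{n}$), over $Y \subset [n]$, and over the exception set $L = \{i : v_i \notin B\}$ with $|L| \le n/4$, one obtains
\[
q_n(\beta) \le \sum_{B, Y, L} \binom{n}{|L|} |B|^{n - |L|} p^{|L|} \cdot \rho(v_Y)^{|Y^c|}.
\]
Using the container bound $|B| \rho(v_Y) \le 2^{16}/\sqrt{|v|}$, the summand for each fixed $(B, Y, L)$ can be rewritten as a product of factors $(|B|\rho(v_Y))^{\min(n-|L|, |Y^c|)}$ and residual terms involving $|B|$, $\rho(v_Y) \ge \beta$, and $p$.

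The main obstacle I foresee is the factor $p^{|L|}$ in the count: with $|L|$ as large as $n/4$ this alone contributes $\exp(\Theta(n \log p)) = \exp(\Theta(n^{3/2}))$, which a single application of Theorem~\ref{thm:containers} together with the anticoncentration bound above is too weak to beat. To overcome this I would iterate Theorem~\ref{thm:containers} on the nested restrictions $v_{Y_0} = v,\, v_{Y_1},\, v_{Y_2}, \ldots$, obtaining containers $B_1, B_2, \ldots, B_K \in \cC$ and a nested sequence $[n] = Y_0 \supset Y_1 \supset \cdots \supset Y_K$ with $K = O(\log n)$, terminating when $|Y_K|$ falls below $2^{18}\log p$. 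At each level $k$, every index $i \in Y_{k-1} \setminus Y_k$ outside a small exception set $L_k \subset Y_{k-1}$ of size $\le |Y_{k-1}|/4$ is then constrained to lie in the intersection $B_1 \cap \cdots \cap B_k$, which is typically much smaller than any single $B_k$. The crux of the calculation will be showing that the counting gain from these iterated intersections telescopes against the anticoncentration bound $\rho(v_{Y_K})^{|Y_K^c|}$ (applied at the finest level) to absorb the $p^{|L_k|}$ factors, yielding $q_n(\beta) \le 2^{-n/4}$.
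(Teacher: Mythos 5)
Your high-level strategy is the right one (iterate Theorem~\ref{thm:containers}, partition the vectors into fibres according to the structural data the theorem produces, and bound by a first moment within each fibre), and you correctly identify that a single application of the theorem is far too weak because of the $p^{|L|}$ factor from the exception set. But the specific iteration you propose to fix it has two concrete gaps.

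First, exception indices are never revisited. In your nested chain $Y_0 \supset Y_1 \supset \cdots \supset Y_K$ with $Y_k = Y(v_{Y_{k-1}})$, an index $j \in Y_{k-1}\setminus Y_k$ falls out of the chain permanently. Theorem~\ref{thm:containers} only guarantees that the exception set $L_k = \{i \in Y_{k-1} : v_i \notin B_k\}$ has size at most $|Y_{k-1}|/4$, and nothing prevents $L_k$ from lying entirely inside $Y_{k-1}\setminus Y_k$. So across all levels, up to $\sum_k |Y_{k-1}|/4 \approx n/2$ coordinates of $v$ remain completely unconstrained and contribute $p$ choices each, giving $p^{\Theta(n)} = \exp\big(\Theta(n^{3/2})\big)$. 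This is the very obstacle you flagged in the single-level case; the nested iteration does not remove it. Second, the telescoping you describe does not work even apart from this. You apply anticoncentration only once, as $\rho(v_{Y_K})^{|Y_K^c|}$, so you would need $|B_k| \cdot \rho(v_{Y_K})$ to be small. But Theorem~\ref{thm:containers} pairs $|B_k|$ with $\rho(v_{Y_k})$, giving only $|B_k|\rho(v_{Y_k}) \le 2^{16}/\sqrt{|v_{Y_{k-1}}|}$. Since $Y_K \subset Y_k$, Observation~\ref{obs:rho:monotone} gives $\rho(v_{Y_K}) \ge \rho(v_{Y_k})$, so $|B_k| \cdot \rho(v_{Y_K})$ has no useful upper bound and can exceed $1$.

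The paper's Algorithm~\ref{alg:containers} is designed precisely to evade both problems. It iterates on the nested sets $Z_1 \supset Z_2 \supset \cdots$ of \emph{not-yet-revealed rows}, with $Y_k := Y(v_{Z_k}) \subset Z_k$ (the $Y_k$'s are not nested), $B_k := B(v_{Z_k})$, $X_k := \{i \in Z_k \setminus Y_k : v_i \in B_k\}$, and $Z_{k+1} := Z_k \setminus X_k$. Each step reveals only the rows $X_k$ and pairs them with the columns $Y_k$; the disjointness relations $X_i \cap Y_i = \emptyset$ and $X_i \cap X_j = Y_i \cap X_j = \emptyset$ for $j<i$ guarantee the entries $M_{X_k \times Y_k}$ are fresh at step $k$, so one gets $\rho(v_{Y_k})^{|X_k|}$ \emph{at every level}, which telescopes cleanly with the count $|B_k|^{|X_k|}$ via the bound $|B_k|\rho(v_{Y_k}) \le 2^{16}/\sqrt{|v_{Z_k}|}$. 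Crucially, both $Y_k$ and the exception indices $\{i \in Z_k : v_i \notin B_k\}$ go into $Z_{k+1}$ and get further chances to be containerized; the process only stops when $|v_{Z_{k^*+1}}| < 2^8\sqrt{n}$, so the leftover $p$-choices are at most $p^{2^8\sqrt{n}} \le e^{n/4}$, which is affordable. A smaller point: your bound $\Pr(M_n v = w) \le C/\sqrt{|v|!}$ for the small-support case is not obviously valid for a symmetric matrix, whose rows are not independent; the simple bound $\Pr(M_n v = w) \le 2^{-n}$, obtained by revealing the $k$th row and column last for some $k \in \supp(v)$, is what the paper uses and is sufficient.
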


Theorem~\ref{thm:main} is easily deduced from Lemmas~\ref{lem:sufftoshow} and~\ref{thm:qrho}.

\begin{proof}[Proof of Theorem~\ref{thm:main}, assuming Lemmas~\ref{lem:sufftoshow} and~\ref{thm:qrho}]
Let $n \in \N$ be sufficiently large, let\/ $\exp\big( 2^{-11} \sqrt{n} \big) \le p \le 2 \cdot \exp\big( 2^{-11} \sqrt{n} \big)$ be prime, and set $\beta := 4/p$. By Lemmas~\ref{lem:sufftoshow} and~\ref{thm:qrho}, it follows that
$$\Pr\big( \det(M_n) = 0 \big) \le \, 16n \sum_{m = n-1}^{2n-3} \bigg( \big( 4/p \big)^{1/8} + \frac{p}{2^{m/4 + 2}} \bigg) \le \exp\big( - c \sqrt{n} \big)$$
for some $c > 2^{-15}$, as required.
\end{proof}

We will prove Theorem~\ref{thm:containers} in Section~\ref{sec:containers}, and deduce Lemma~\ref{thm:qrho} in Section~\ref{sec:iterate}. Although the proofs are not especially technical, some of the definitions may initially seem somewhat surprising. In order to motivate these definitions, we will now provide a brief outline of the argument, beginning with the deduction of Lemma~\ref{thm:qrho} from Theorem~\ref{thm:containers}. 


\subsection{An outline of the proof of Lemma~\ref{thm:qrho}}\label{sketch:app:sec}

We will bound $q_n(\beta)$ using the first moment method: for each $w \in \Z_p^n$, we will bound the expected number of vectors $v \in \Z_p^n \setminus \{0\}$ with $\rho(v) \ge \beta$ such that $M_n \cdot v = w$. In order to do so, we will use Theorem~\ref{thm:containers} to partition the collection of vectors $v \in \Z_p^n$ with $\rho(v) \ge \beta$ and $|v| \ge \lambda\sqrt{n}$ into a collection $\cU$ of at most $n^{cn}$ `fibres' (for some $\lambda > 0$ and $c > 0$); we will then apply the union bound inside each fibre. 
The bound we obtain on the probability that $M_n \cdot v = w$ will depend on the fibre containing $v$, and the partition is chosen so that (for each $S \in \cU$ and $w \in \Z_p^n$) the expected number of vectors $v \in S$ with $M_n \cdot v = w$ is at most $n^{-c'n}$ (for some $c' > c$). The claimed bound then follows by summing over fibres, and then dealing with the vectors with small support separately via a simple counting argument (see Lemma~\ref{lem:small:support}). 

To construct the partition, we need to map each vector $v \in \Z_p^n \setminus \{0\}$ to a fibre containing $v$. To do so, we repeatedly apply Theorem~\ref{thm:containers} to vectors of the form $v_X$, for some set $X \subset [n]$ given by the earlier steps of the process. The theorem provides us with a container $B(v_X)$ for $v_X$, and this allows us to bound the number of choices for $v_X$. Revealing the rows of $M_n$ corresponding to $X$, we will be able to use the probability that $M_{X \times [n]} \cdot v = w_X$, and the bound on $|B(v)|$ given by~\eqref{eq:containers:properties}, to `beat' this number of choices. We continue this iteration until we have chosen all but $O(\sqrt{n})$ of the non-zero entries of $v$. 

To describe a single step of this iteration, assume that we have already revealed a subset of the rows of $M_n$, and let $Z \subset [n]$ denote the set of rows that have not yet been revealed. By Theorem~\ref{thm:containers}, we may associate, to each vector $v \in \Z_p^n \setminus \{0\}$ with $\rho(v_Z) \ge \rho(v) \ge \beta \ge 4/p$ (see Observation~\ref{obs:rho:monotone}, below) and $|v_Z| \ge 2^{18} \log p$, sets
$$Y(v_Z) \subset Z, \quad B(v_Z) \subset \Z_p \quad \text{and} \quad X(v_Z) := \big\{ i \in Z \setminus Y(v_Z) : v_i \in B(v_Z) \big\}.$$
In this step we will `reveal' the rows of $M_n$ corresponding to $X = X(v_Z)$, and sum over the choices for $v_i \in B(v_Z)$ for each $i \in X$. We claim that 
\begin{equation}\label{eq:MXY:sketch}
\Pr\big( M_{X \times [n]} \cdot v = w_X \big) \le \rho(v_Y)^{|X|}.
\end{equation}
Indeed, since $X$ and $Y = Y(v_Z)$ are disjoint subsets of $Z$, the entries of $M_{X \times Y}$ are all independent (of each other, and of the previously revealed entries of $M_n$), so the claimed bound holds by the definition of $\rho$ (see the proof of Lemma~\ref{lem:key:calc}, below, for the details). 

\enlargethispage*{\baselineskip}

Each fibre will be the set of vectors that have the same sequence of sets $X$, $Y$ and $B(v_Z)$, and therefore in order to bound the number of fibres, we just need to count the number of choices for these sets. 
We have at most $2^{|Z|}$ choices each for $X$ and $Y$, and at most  
$$\exp\Big( 2^{12} \big( \log p \big)^2 \Big) \le \exp\big( 2^{-8} n \big)$$
choices for the set $B(v_Z)$, by~\eqref{eq:number:of:containers} and our choice of $p$. Now, it follows from~\eqref{eq:containers:properties} and our bounds on $|Y|$ that $|X| \ge |Z|/4$, and hence the total number of choices for these sets (over all steps of the process) is at most $\exp\big( 2^{-6} n \log n \big)$, see Lemma~\ref{lem:Usize}, below. 

Finally, we have at most $|B(v_Z)|^{|X|}$ choices for the vector $v_X$. Multiplying this by the probability bound~\eqref{eq:MXY:sketch}, and using the bound on $|B(v_Z)|$ given by~\eqref{eq:containers:properties}, we obtain  
$$|B(v_Z)|^{|X|} \rho(v_Y)^{|X|} \le \bigg( \frac{2^{16}}{\sqrt{|v_Z|}} \bigg)^{|X|} \le n^{-|X|/4},$$
since $|v_Z| \ge \lambda\sqrt{n}$. Since $|X| \ge n/4$ in the first step, this will be sufficient to prove the claimed bound on the expected number of vectors $v \in C$ with $M_n \cdot v = w$. 

Before continuing, let us briefly discuss why we are unable to use Theorem~\ref{thm:containers} to prove a stronger bound than that in Theorem~\ref{thm:main}. Recall that our probability bound for a given fibre is of the form $n^{-cn}$, and the number of fibres is roughly $\exp\big( (\log p)^2 \log n \big)$, since we need to iterate the process described above $\log n$ times. (We remark that for this reason we also cannot deduce a stronger bound for the probability that $\det(A_n) = 0$ from Theorem~\ref{thm:containers}.) In order to improve our bound further, one would therefore need to either find a smaller set of containers, or find a set of containers that covers much more of the vector than those in Theorem~\ref{thm:containers}. It is possible that an improvement of this type could be used to prove a bound of the form $\exp\big( - \sqrt{n\log n} \big)$; however, as we will show next, in order to obtain any further improvement significant new ideas would be needed.

\subsection{A natural barrier at $\exp\big( - \sqrt{n\log n} \big)$}\label{barrier:sec}

In this section we explain why a simple union bound (like that described in Section~\ref{sketch:app:sec}) cannot be used to prove a significantly stronger bound than that in Theorem~\ref{thm:main}, without `reusing' some of the randomness in $M_n$. Let $m \le n$, and consider the family of vectors $v \in \Z^n$ whose entries are chosen from the set $\{-N,\ldots,N\}$, where $N = c n^{-1/2} 2^m$ (for some small $c > 0$). For any such $v$, we have
$$\rho(v_{[k]}) \ge \rho(v) \ge 2^{-m}$$
for every $k \ge m$, since a random walk with step sizes at most $N$ ends in the interval $[-2^{m-2},2^{m-2}]$ with probability at least $1/2$. Note also that $\rho(v_{[k]}) \ge 2^{-k}$ for every $k < m$.

Now, it follows that the natural bound\footnote{Note that we are losing something here, since the already-revealed part of the matrix may map $v_{[k+1,n]}$ to a non-maximiser of $\rho$. Controlling this (and using an anticoncentration inequality that is sensitive to the choice of the maximiser) may be a way to overcome the barrier described in this section.} 
$$\Pr\big( M_n \cdot v = 0 \big) \le \, \prod_{k=1}^n \rho(v_{[k]}),$$
which uses all of the randomness in $M_n$, cannot give a stronger bound than
$$\Pr\big( M_n \cdot v = 0 \big) \le 2^{-m(n - m)} \prod_{k=1}^m 2^{-k} = 2^{-mn + m^2/2 + O(n)}.$$
Since there are $(2N+1)^n \ge c^n 2^{mn} n^{-n/2}$ choices for the vector $v$, a union bound (over these vectors) gives (at best) a bound of $n^{-n/2} 2^{m^2/2 + O(n)}$, which is small only if $m \le \sqrt{n \log n}$.

It follows that our proof method only has a chance of working if $p \le \exp\big( \sqrt{n \log n} \big)$. However, if we are working over $\Z_p$ then we cannot hope to prove a much stronger bound on the singularity probability than $1/p$. Indeed, let $M_{n-1}$ be the matrix obtained by removing the first row and column of $M_n$, and let $u \in \{-1,1\}^{n-1}$ be obtained from the first row of $M_n$ by deleting the entry $m_{11}$. Now, if $\det(M_{n-1}) \ne 0$ and $\< u, \, M_{n-1}^{-1} \cdot u \> = m_{11}$, then there exists a vector $w := (1,-M_{n-1}^{-1} \cdot u) \in \Z_p^n \setminus \{0\}$ with $M \cdot w = 0$, and hence $\det(M_n) = 0$. It follows that the event that $\det(M_n) = 0$ is (up to a constant factor) at least as likely as the event that $\< u, \, M_{n-1}^{-1} \cdot u \> \in \{-1,1\}$, and it seems reasonable to expect that this latter event occurs with probability at least $\Omega(1/p)$. 



\subsection{Hal\'asz's inequality, and the inverse Littlewood--Offord theorem}\label{sketch:containers:sec}

In this section we will state the main tool we will use in the proof of Theorem~\ref{thm:containers}, a classical Littlewood--Offord theorem due to Hal\'asz~\cite{H77}. We will also prepare the reader for the proof in the next section by providing some motivation for the way we define our family of containers. 

In order to state Hal\'asz's inequality, we need a little preparation. First, let us define multiplication on $\Z_p$ as follows: if $x,y \in \Z_p$, then the product $x \cdot y \in \Z$ is obtained by projecting $x$ and $y$ onto elements of $\{0,1,\ldots,p-1\}$ in the usual way, and then multiplying in $\Z$. 
Let $\| \cdot \|$ denote the distance to the nearest integer, and for each $n \in \N$, prime $p$ and vector $v \in \Z_p^n$, define the \emph{level sets} of $v$ to be 
\begin{equation}\label{def:level:sets}
T_t(v) := \bigg\{ k \in \Z_p \,:\, \sum \limits_{i = 1}^n \left \| \dfrac{k \cdot v_i}{p} \right \|^2 \le t \bigg\},
\end{equation}
for each $t \ge 0$. 

We can now state the lemma of Hal\'asz~\cite{H77}; since we use a slightly different form than is usually stated, for completeness we provide a proof in Appendix~\ref{app:Halasz}. 

\begin{lemma}[Hal\'{a}sz's Anticoncentration Lemma]\label{lem:halasz}
Let $n \in \N$ and $p$ be prime, and let $v \in \Z_p^n \setminus \{0\}$. Then 
$$\rho(v) \le \dfrac{3}{p} + \frac{4 |T_\ell(v)|}{p \sqrt{\ell}} + e^{-\ell}$$
for every $1 \le \ell \le 2^{-6} |v|$.
\end{lemma}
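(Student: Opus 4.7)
My plan is to give a Fourier-analytic proof in two stages: first reduce $\rho(v)$ to a sum over characters of $\Z_p$ in which the summand is controlled by $\phi(k) := \sum_i \|kv_i/p\|^2$, and then bound that sum by decomposing along the level sets $T_t(v)$. For the Fourier step, discrete inversion on $\Z_p$ gives, for each $a$,
$$\Pr\Big(\sum_i u_i v_i \equiv a \pmod p\Big) = \frac{1}{p}\sum_{k \in \Z_p} e^{-2\pi i k a/p}\prod_{i=1}^n \cos(2\pi k v_i/p),$$
so $\rho(v) \le \frac{1}{p}\sum_{k} \prod_i |\cos(2\pi k v_i/p)|$. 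The elementary bound $|\cos(\pi y)| \le e^{-2\|y\|^2}$ applied with $y = 2kv_i/p$, followed by the change of variables $k \mapsto 2^{-1}k$ (valid since $p$ is odd), converts this into
$$\rho(v) \le \frac{1}{p}\sum_{k \in \Z_p} e^{-2\phi(k)}.$$

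Next I would split the sum into three ranges. The term $k=0$ contributes exactly $1$, giving the $1/p$ piece. The range $k \notin T_\ell(v)$ contributes at most $pe^{-2\ell} \le pe^{-\ell}$, producing the $e^{-\ell}$ error term. The main work is the middle range $0 \neq k \in T_\ell(v)$, which I would handle via a layer-cake identity:
$$\sum_{0 \neq k \in T_\ell(v)} e^{-2\phi(k)} = \int_0^\ell 2e^{-2t}\bigl(|T_t(v)|-1\bigr)\,dt + \bigl(|T_\ell(v)|-1\bigr)e^{-2\ell}.$$
The task therefore reduces to a growth estimate on level sets.

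The key structural inequality is $|T_t(v)| - 1 \le 2|T_\ell(v)|\sqrt{t/\ell}$ for $0 \le t \le \ell$, which I would prove by combining a sumset containment with Cauchy--Davenport. By the triangle inequality on $\R/\Z$ and Cauchy--Schwarz, $\phi(k_1+\cdots+k_m) \le m\sum_j \phi(k_j)$, so $m \cdot T_t(v) \subset T_{m^2 t}(v)$; taking $m = \lfloor\sqrt{\ell/t}\rfloor$ gives $m \cdot T_t(v) \subset T_\ell(v)$. Since $p$ is prime, Cauchy--Davenport yields $|m \cdot T_t(v)| \ge \min\bigl(p,\, m(|T_t(v)|-1)+1\bigr)$. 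The hypothesis $\ell \le 2^{-6}|v|$ is used exactly here: a direct computation gives $\mathbb{E}_k[\phi(k)] = |v|(p^2-1)/(12p^2) > \ell$, which forces some $k$ to satisfy $\phi(k) > \ell$, hence $T_\ell(v) \ne \Z_p$, and so the Cauchy--Davenport bound becomes non-trivial. Solving for $|T_t(v)|$ then yields the advertised structural estimate. Substituting into the integral, the Gaussian moment $\int_0^\infty 2e^{-2t}\sqrt{t}\,dt < 1$ controls the bulk, while $\sqrt{\ell}\,e^{-2\ell} \le 1$ absorbs the boundary contribution. Tracking constants delivers a bound of the form $2 + 4|T_\ell(v)|/\sqrt{\ell}$ for the middle range, and combining the three pieces gives the stated inequality.

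The main (and essentially only) obstacle is the structural estimate $|T_t(v)| \lesssim |T_\ell(v)|\sqrt{t/\ell}$. The rest is a routine Fourier-plus-Gaussian-integral calculation. The sumset inclusion $m \cdot T_t \subset T_{m^2 t}$ is immediate, and Cauchy--Davenport is the natural cardinality tool on $\Z_p$; the delicate point is ensuring $|T_\ell(v)| < p$ so that Cauchy--Davenport is non-vacuous, and this is the single place where the explicit constant $2^{-6}$ in the hypothesis $\ell \le 2^{-6}|v|$ is needed.
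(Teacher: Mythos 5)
Your proposal is correct and follows essentially the same route as the paper's proof in Appendix B: Fourier inversion combined with a pointwise cosine bound reduces $\rho(v)$ to a sum of $\exp(-c\,\phi(k))$ over $k\in\Z_p$; the level-set growth estimate $|T_t(v)|\lesssim |T_\ell(v)|\sqrt{t/\ell}$ is obtained exactly as in the paper via the sumset inclusion $m\cdot T_t(v)\subset T_{m^2t}(v)$ together with Cauchy--Davenport, with $|T_\ell(v)|<p$ coming from the $\ell\le 2^{-6}|v|$ hypothesis and an expectation calculation; and the final bound follows by a layer-cake/Abel-summation argument. Your minor variations (the slightly sharper cosine inequality with exponent $2$, the continuous integral in place of the paper's discrete summation by parts, and the exact expectation $|v|(p^2-1)/(12p^2)$ instead of the paper's cruder $>|v|/2^6$) are cosmetic and do not change the structure of the argument.
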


Let us now motivate the way we choose our family of containers, see~\eqref{def:C}, below. The basic intuition, first suggested by Tao and Vu~\cite{TV07,TV09a}, is that if $\rho(v)$ is large, then $v$ should have some arithmetic structure. We think of the elements of the level sets $T_t(v)$ as `frequencies' that correlate with the entries of $v$, and thus encode this arithmetic structure. Following the strategy of Tao and Vu~\cite{TV07} and Nguyen and Vu~\cite{NV11}, we would therefore like to define the container of each `structured' vector using its level sets.

The problem is that we would like a relatively small family of containers, whereas the number of level sets could potentially be very large. The solution is very simple: we consider a random subset $U$ of the coordinates of $v$. We will show that if $|U| \ge 2^{12} \log p$, then $v_U$ still correlates with the frequencies of the level sets of $v$, and we will choose the container of $v$ to be (roughly speaking) the elements of $\Z_p$ that correlate with these frequencies. We then choose $U$ as small as possible (subject to the above argument working), which implies that there are few choices for the vector $v_U$, and hence few containers.

\section{Proof of the inverse Littlewood--Offord theorem}\label{sec:containers}

In this section we will prove Theorem~\ref{thm:containers}. Let $n \in \N$ and a prime $p$ be fixed
throughout the section, and assume that $p > 3$ and $n \ge 2^{18} \log p$ (since otherwise there are no vectors $v \in \Z_p^n$ with $\rho(v) \ge 4/p$ and $|v| \ge 2^{18} \log p$, so the statement holds vacuously with $\cC = \emptyset$). 



For each $m \in \N$ and $w \in \Z_p^m$, define (cf.~\cite[Section~7]{TV07} and~\cite[Section~5]{NV11}) the set of `frequencies' of $w$ to be
$$F(w) := \left\{ k \in \Z_p \,:\, \sum_{i = 1}^m \left\|\frac{k \cdot w_i}{p}\right\|^2 \le \log p \right\},$$
and note (recalling~\eqref{def:level:sets}) that $F(w) = T_{\log p}(w)$. Now, for each $S \subset \Z_p$, define
\begin{equation}\label{def:C:of:S}
C(S):= \bigg\{ a \in \Z_p \,:\, \sum_{k \in S} \bigg\| \frac{a \cdot k}{p} \bigg\|^2 \le \, \frac{|S|}{2^5} \bigg\}.
\end{equation}
Now set $m := \lfloor 2^{12} \log p \rfloor$, and define
\begin{equation}\label{def:C}
\cC := \big\{ C\big( F(w) \big) : w \in \Z_p^m \big\},
\end{equation}
and observe that $|\cC| \le p^m$, as required. We will show that $\cC$ has the desired properties.

The following simple lemma motivates our choice of containers (cf.~\cite[Section~5]{NV11}). 

\begin{lemma}\label{lem:containers:contain:most:of:v}
Let $v \in \Z_p^n$, and let $t \le 2^{-7} n$. If $S \subset T_t(v)$, then 
$$\big| \big\{ i \in [n] : v_i \not\in C(S) \big\} \big| \le \frac{n}{4}.$$
\end{lemma}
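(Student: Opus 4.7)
The plan is a straightforward double-counting (Fubini) argument, exploiting the symmetry between the definitions of $T_t(v)$ and $C(S)$: the expression $\|k \cdot v_i / p\|^2$ plays a dual role, bounded on average over $i$ when $k$ lies in a level set, and bounded on average over $k$ when $v_i$ lies in a container.

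First I would consider the double sum
\[
\Sigma := \sum_{i=1}^n \sum_{k \in S} \left\| \frac{k \cdot v_i}{p} \right\|^2.
\]
Swapping the order of summation and using the hypothesis $S \subset T_t(v)$, for every fixed $k \in S$ the inner sum $\sum_{i=1}^n \|k \cdot v_i / p\|^2$ is at most $t$, by the definition~\eqref{def:level:sets} of the level sets. Hence
\[
\Sigma \le |S| \cdot t.
\]

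Next, I would view the same sum grouped by $i$. Since multiplication in $\Z_p$ is commutative (so $k \cdot v_i = v_i \cdot k$ in the relevant sense), for each index $i$ such that $v_i \notin C(S)$, the definition~\eqref{def:C:of:S} of $C(S)$ gives
\[
\sum_{k \in S} \left\| \frac{v_i \cdot k}{p} \right\|^2 > \frac{|S|}{2^5}.
\]
Let $N := |\{i \in [n] : v_i \notin C(S)\}|$. Dropping the contributions of the $i$'s for which $v_i \in C(S)$, we get $\Sigma > N \cdot |S|/2^5$.

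Combining the two bounds yields $N \cdot |S|/2^5 < |S| \cdot t$, and cancelling $|S|$ (we may assume $S \neq \emptyset$, else the conclusion is trivial) gives $N < 2^5 t \le 2^5 \cdot 2^{-7} n = n/4$, as required. There is no real obstacle here: the lemma is essentially the observation that the definitions of $T_t$ and $C(S)$ are Fubini-dual, and the constants $2^{-7}$ and $2^{-5}$ are chosen precisely so the resulting bound on $N$ lands at $n/4$.
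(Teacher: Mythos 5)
Your proof is correct and is essentially identical to the paper's: both proofs consider the double sum $\sum_{i}\sum_{k\in S}\|k\cdot v_i/p\|^2$, bound it above by $t|S|$ via $S\subset T_t(v)$, bound it below by $N|S|/2^5$ via the definition of $C(S)$, and cancel $|S|$. Your explicit handling of $S=\emptyset$ is a harmless extra step (the paper uses non-strict inequalities, so it does not need to divide).
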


\begin{proof}
Let $R = \big\{ i \in [n] : v_i \notin C(S) \big\}$, and observe that, by~\eqref{def:level:sets} and~\eqref{def:C:of:S},
$$\frac{|R| |S|}{2^5} \le \sum_{i \in R} \sum_{k \in S} \bigg\| \frac{k \cdot v_i}{p} \bigg\|^2 \le \sum_{k \in S} \sum_{i = 1}^n \bigg\| \frac{k \cdot v_i}{p} \bigg\|^2 \le t  |S| \le \frac{n |S|}{2^7},$$
so $|R| \le n/4$, as required.
\end{proof}

Later in the proof, we will define $B(v) := C\big( F(v_U) \big)$ for some set $U \subset [n]$ with $|U| \le m$ such that $F(v_U) \subset T_t(v)$ for $t = 2^{-7} n$ (see Lemma~\ref{lemma:def:U}, below). We next turn to bounding the size of our containers; the following lemma (cf.~\cite[Section~5]{NV11}) provides a first step.

\begin{lemma}\label{lem:containers:small:first:bound}
For any set $S \subset \Z_p$, we have
\begin{equation}\label{eq:containers:small:first:bound}
|C(S)| \le \frac{4p}{|S|}.
\end{equation}
\end{lemma}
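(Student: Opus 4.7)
\textbf{Proof plan for Lemma~\ref{lem:containers:small:first:bound}.} The plan is to use a Fourier/Parseval argument. Introduce the exponential sum
$$T(a) := \sum_{k \in S} e^{2\pi i a k / p}, \qquad a \in \Z_p,$$
and observe that $T(0) = |S|$, while Parseval's identity on $\Z_p$ gives
$$\sum_{a \in \Z_p} |T(a)|^2 \;=\; p \cdot |S|.$$
The strategy is to show that every $a \in C(S)$ contributes $|T(a)|^2 \geq c_0 |S|^2$ for some absolute constant $c_0 > 0$; summing this over $C(S)$ and dividing into the Parseval identity will yield $|C(S)| \leq p/(c_0 |S|)$.

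\textbf{The key trigonometric step.} First I would unpack $|S| - \mathrm{Re}(T(a))$ via the identity $1 - \cos(2\pi x) = 2\sin^2(\pi x)$ and the standard comparison $|\sin(\pi x)| \leq \pi \|x\|$. For $a \in C(S)$, this gives
$$|S| - \mathrm{Re}(T(a)) \;=\; 2\sum_{k \in S} \sin^2\!\left(\pi \frac{a \cdot k}{p}\right) \;\leq\; 2\pi^2 \sum_{k \in S}\left\|\frac{a \cdot k}{p}\right\|^2 \;\leq\; \frac{2\pi^2 |S|}{2^5},$$
so $\mathrm{Re}(T(a)) \geq (1 - \pi^2/16)|S|$, and hence $|T(a)|^2 \geq \mathrm{Re}(T(a))^2 \geq c_0 |S|^2$ for a positive absolute constant $c_0$.

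\textbf{Closing via Parseval.} Summing the bound $|T(a)|^2 \geq c_0 |S|^2$ over $a \in C(S)$ and comparing with Parseval gives
$$c_0 |C(S)| \cdot |S|^2 \;\leq\; \sum_{a \in \Z_p} |T(a)|^2 \;=\; p |S|,$$
yielding $|C(S)| \leq p/(c_0 |S|)$, the desired type of bound $O(p/|S|)$.

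\textbf{Main obstacle.} The hard part is squeezing the constant all the way down to $4$. The naive estimate $|\sin(\pi x)| \le \pi \|x\|$ gives a constant of roughly $1/(1-\pi^2/16)^2 \approx 6.8$, which is worse than $4$. To reach the stated constant, I would tighten the argument by one of the following refinements: (i) apply Cauchy--Schwarz to $\sum_{a \in C(S)} \mathrm{Re}(T(a))$ rather than bounding each $\mathrm{Re}(T(a))$ individually; (ii) use a sharper bound on $2\sin^2(\pi x)$ that interpolates between $2\pi^2 \|x\|^2$ (accurate near $0$) and $2$ (the trivial uniform bound), splitting the sum according to whether $\|ak/p\|$ is above or below a threshold chosen to balance the two estimates; or (iii) exploit the symmetry $a \mapsto -a$ of $C(S)$ (which forces $T(-a) = \overline{T(a)}$) together with the inclusion $0 \in C(S)$, whose contribution $|T(0)|^2 = |S|^2$ can be separated out. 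Any of these should recover the constant $4$, but the bookkeeping of constants is where the real work lies; the conceptual content is just Parseval plus the trigonometric comparison above.
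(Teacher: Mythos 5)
Your approach matches the paper's: the paper works with $\sum_{k\in S}\cos(2\pi ak/p)=\mathrm{Re}(T(a))$, proves the second-moment bound $\sum_{a\in\Z_p}\big(\mathrm{Re}(T(a))\big)^2\le p|S|$ by expanding $2\cos\theta=e^{i\theta}+e^{-i\theta}$ and noting that the off-diagonal roots-of-unity sums vanish (this is just your Parseval identity, restricted to the real part), and then applies Markov's inequality to the squared cosine sum. The only place the paper diverges from your plan is the trigonometric comparison: in place of $|\sin(\pi x)|\le\pi\|x\|$, it asserts the pointwise bound $\cos(2\pi x)\ge 1-2^4\|x\|^2$, which with the $|S|/2^5$ threshold yields $\mathrm{Re}(T(a))\ge|S|/2$ for every $a\in C(S)$ and hence $|C(S)|\le 4p/|S|$ outright -- none of your refinements (i)--(iii) is used. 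Your worry about the constant is, however, well-founded: the inequality $1-2^4\|x\|^2\le\cos(2\pi x)$ is actually false (at $x=1/8$ it asserts $3/4\le 1/\sqrt{2}$; the sharp coefficient is $2\pi^2\approx 19.7 > 2^4$, precisely what $|\sin(\pi x)|\le\pi\|x\|$ gives), so the argument as written delivers roughly $6.8\,p/|S|$, exactly as you computed, not $4p/|S|$. This slip is inconsequential for the paper: a constant $7$ in place of $4$ here propagates through Lemma~\ref{lem:key:calc} and the rest of Section~\ref{sec:iterate} with ample slack, or alternatively one can shrink the $2^5$ threshold in~\eqref{def:C:of:S} and tighten the constants in Lemmas~\ref{lem:containers:contain:most:of:v} and~\ref{lemma:def:U} accordingly. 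So the obstacle you identified is not a gap in your argument but a genuine (harmless) error in the paper's, and the fix is simply to accept a slightly larger absolute constant rather than to introduce any of the structural refinements you sketched.
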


\begin{proof}
We will instead bound the size of the larger set 
$$C'(S):= \bigg\{ a \in \Z_p \,:\, \sum_{k \in S}\cos\bigg( \frac{2\pi a k}{p} \bigg) \ge \frac{|S|}{2} \bigg\}.$$
Indeed, observe that $C(S) \subset C'(S)$, since we have $1 - 2^4 \|x\|^2 \le \cos (2\pi x)$ for every $x \in \R$. 

Now, let $a$ be a uniformly-chosen random element of $\Z_p$, and observe that, by Markov's inequality,
\begin{align*}
\Pr\big( a \in C'(S) \big) & \, = \, \Pr\bigg( \bigg( \sum_{k \in S} \cos\bigg( \frac{2\pi a k}{p} \bigg) \bigg)^2 \ge \frac{|S|^2}{4} \bigg) \\
& \, \le \, \frac{4}{|S|^2} \cdot \frac{1}{p} \sum_{a \in \Z_p} \bigg( \sum_{k \in S} \cos\bigg( \frac{2\pi a k}{p} \bigg) \bigg)^2,
\end{align*}
Now, since $2\cos(x) = e^{ix} + e^{-ix}$, we have
$$4 \sum_{a \in \Z_p} \bigg( \sum_{k \in S} \cos\bigg( \frac{2\pi a k}{p} \bigg) \bigg)^2 = \sum_{k_1 \in \pm S} \sum_{k_2 \in \pm S} \sum_{a \in \Z_p} \exp\bigg( \frac{2\pi i a (k_1 + k_2)}{p} \bigg) \le 4p |S|,$$
where $\pm S$ is the multi-set obtained by taking the union of $S$ and $-S$, counting elements in both twice. For the second step, simply note that the roots of unity sum to zero, so the only terms that contribute are those with $k_1 + k_2 = 0$. It follows that 
$$\frac{4}{|S|^2} \cdot \frac{1}{p} \sum_{a \in \Z_p} \bigg( \sum_{k \in S} \cos\bigg( \frac{2\pi a k}{p} \bigg) \bigg)^2 \le \frac{4}{|S|},$$
and hence $|C(S)| \le |C'(S)| \le 4p / |S|$, as claimed.
\end{proof}

We will use Hal\'{a}sz's Anticoncentration Lemma (Lemma~\ref{lem:halasz}) to bound the right-hand side of~\eqref{eq:containers:small:first:bound} in terms of $\rho(v_Y)$ (for some set $Y$ that will be chosen in Lemma~\ref{lem:Ydef}, below). The following lemma is a straightforward application of Lemma~\ref{lem:halasz}.

\begin{lemma}\label{lem:Halasz:application}
Let $v \in \Z_p^n$ with $\rho(v) \ge 4/p$ and $|v| \ge 2^{18} \log p$, and let $Y \subset [n]$ be such that $|v_Y| \ge |v| / 4$. Then
$$\rho(v_Y) \le \frac{2^{13} |T_\ell(v_Y)|}{p\sqrt{|v|}},$$
where $\ell := 2^{-16} |v|$. 
\end{lemma}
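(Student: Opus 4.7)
The plan is to apply Hal\'asz's Anticoncentration Lemma (Lemma~\ref{lem:halasz}) directly to the restricted vector $v_Y$ with the specific choice $\ell = 2^{-16}|v|$, and then to absorb the error terms $3/p$ and $e^{-\ell}$ from Hal\'asz's bound using the hypothesis $\rho(v) \ge 4/p$.

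The first step is to verify that this $\ell$ is a legal choice for Hal\'asz's lemma applied to $v_Y$. The hypothesis $|v| \ge 2^{18}\log p$ yields $\ell \ge 4\log p \ge 1$ (using $p > 3$), while $|v_Y| \ge |v|/4$ gives $\ell = 2^{-16}|v| \le 2^{-6}|v_Y|$, which is the upper constraint in Lemma~\ref{lem:halasz}. Note also that $v_Y$ is nonzero since its support has size at least $|v|/4 > 0$. Applying Hal\'asz's lemma and substituting $\sqrt{\ell} = 2^{-8}\sqrt{|v|}$ into the middle term transforms the inequality into
\[
\rho(v_Y) \,\le\, \frac{3}{p} \,+\, \frac{2^{10}|T_\ell(v_Y)|}{p\sqrt{|v|}} \,+\, e^{-\ell},
\]
so the target bound differs from the middle term by a factor of just $2^{3}$, meaning it suffices to show that $3/p + e^{-\ell} \le (7/8)\rho(v_Y)$.

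This is where the hypothesis $\rho(v) \ge 4/p$ pays off. By monotonicity of $\rho$ under restriction (Observation~\ref{obs:rho:monotone}), $\rho(v_Y) \ge \rho(v) \ge 4/p$, so $(7/8)\rho(v_Y) \ge 7/(2p)$. On the other hand, the support lower bound gives $\ell \ge 4\log p$, hence $e^{-\ell} \le p^{-4}$, which for $p>3$ is much smaller than $1/(2p)$. Therefore $3/p + e^{-\ell} < 7/(2p) \le (7/8)\rho(v_Y)$, and rearranging Hal\'asz's inequality yields $\rho(v_Y)/8 \le 2^{10}|T_\ell(v_Y)|/(p\sqrt{|v|})$, which is exactly the desired bound.

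There is not really a serious obstacle; the whole argument is a bookkeeping exercise once one notices that the constant $4$ in the hypothesis $\rho(v) \ge 4/p$ is exactly calibrated to dominate the constant $3$ in the $3/p$ term of Hal\'asz's lemma, and that the lower bound $|v| \ge 2^{18}\log p$ is exactly what is needed to make $e^{-\ell}$ negligible next to $1/p$. If anything, care is only needed in tracking the constants to ensure that the final bound has $2^{13}$ rather than something larger.
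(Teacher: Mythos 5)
Your proof is correct and follows essentially the same route as the paper: apply Hal\'asz's lemma to $v_Y$ with $\ell = 2^{-16}|v|$, use Observation~\ref{obs:rho:monotone} to get $\rho(v_Y) \ge 4/p$, and absorb the $3/p$ and $e^{-\ell}$ terms into a fraction of $\rho(v_Y)$. The paper states the absorption more tersely (jumping directly from the factor $4$ to $2^5$ in front of $|T_\ell(v_Y)|/(p\sqrt{\ell})$), but the underlying bookkeeping is exactly the $7/8$ argument you spelled out.
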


In the proof of Lemma~\ref{lem:Halasz:application}, and also later in the section, we will need the following simple observation (see Lemma~A.10 or~\cite[Lemma~2.8]{FeJ}).

\begin{obs}[Lemma~2.8 of \cite{FeJ}]\label{obs:rho:monotone}
$\rho(v_Y) \ge \rho(v)$ for every\/ $v \in \Z_p^n$ and every\/ $Y \subset [n]$.
\end{obs}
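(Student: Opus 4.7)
The plan is to prove this by conditioning on the coordinates outside $Y$. Write $Z := [n] \setminus Y$, and observe that the random sum $\sum_{i=1}^n u_i v_i$ splits as the independent sum of $S_Y := \sum_{i \in Y} u_i v_i$ and $S_Z := \sum_{i \in Z} u_i v_i$, since the $u_i$ are independent. First I would fix an arbitrary $a \in \Z_p$ and use the law of total probability to write
\[
\Pr\big( S_Y + S_Z = a \big) \,=\, \sum_{b \in \Z_p} \Pr(S_Z = b) \cdot \Pr(S_Y = a - b),
\]
where the factorisation uses independence of $S_Y$ and $S_Z$.

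Next, bound each factor $\Pr(S_Y = a - b)$ by $\rho(v_Y)$, which is the definition of $\rho$ applied to the vector $v_Y$. This yields
\[
\Pr\big( S_Y + S_Z = a \big) \,\le\, \rho(v_Y) \sum_{b \in \Z_p} \Pr(S_Z = b) \,=\, \rho(v_Y),
\]
since the probabilities $\Pr(S_Z = b)$ sum to one. Finally, take the maximum over $a \in \Z_p$ on the left-hand side to conclude that $\rho(v) \le \rho(v_Y)$.

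There is no real obstacle here — the statement is essentially the standard monotonicity of concentration functions under restriction, and the only thing to be careful about is to explicitly invoke independence of $S_Y$ and $S_Z$ when factoring the joint probability. If desired, one could instead phrase the argument as: the conditional distribution of $S_Y + S_Z$ given the values of $(u_i)_{i \in Z}$ is a translate of the distribution of $S_Y$, so its point masses are bounded by $\rho(v_Y)$, and averaging over the conditioning preserves this bound.
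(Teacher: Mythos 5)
Your proof is correct and essentially matches the paper's argument (which appears as the first inequality in the proof of Lemma~\ref{lem:rhobound} in Appendix~A): the paper conditions on the full vector $(u_i)_{i\in Z}$ rather than on the partial sum $S_Z$, but this is precisely the alternative phrasing you note at the end, and the two decompositions yield the same bound by independence.
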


\begin{proof}[Proof of Lemma~\ref{lem:Halasz:application}]
Applying Lemma~\ref{lem:halasz} to $v_Y$, with $\ell = 2^{-16} |v| \le 2^{-14} |v_Y|$, gives 
$$\rho(v_Y)\leq \frac{3}{p} + \frac{4 |T_\ell(v_Y)|}{p\sqrt{\ell}} + e^{-\ell}.$$ 
Now, by Observation~\ref{obs:rho:monotone} and our assumption on $\rho(v)$, we have $\rho(v_Y) \ge \rho(v) \ge 4/p$. Since $\ell \ge 4 \log p$, it follows that
$$\rho(v_Y) \,\le\, \frac{2^5 |T_\ell(v_Y)|}{p\sqrt{\ell}} \, = \, \frac{2^{13} |T_\ell(v_Y)|}{p \sqrt{|v|}},$$ 
as claimed.
\end{proof}

To complete the proof, it will now suffice to choose sets $Y \subset [n]$, with $n/4 \le |Y| \le n/2$, and $U \subset [n]$, with $|U| \le m$, such that 
\begin{equation}\label{eq:YU:conditions}
F(v_U) \subset T_t(v), \qquad |v_Y| \ge \frac{|v|}{4} \qquad \text{and} \qquad |T_\ell(v_Y)| \le 2 \cdot |F(v_U)|,
\end{equation}
where $\ell = 2^{-16} |v|$ and $t = 2^{-7} n$. Indeed, for any such sets we have, by Lemmas~\ref{lem:containers:small:first:bound} and~\ref{lem:Halasz:application},
$$\big| C\big( F(v_U) \big) \big| \le \frac{4p}{|F(v_U)|} \le \frac{2^{15}}{\rho(v_Y) \sqrt{|v|}} \cdot \frac{|T_\ell(v_Y)|}{|F(v_U)|} \le \frac{2^{16}}{\rho(v_Y) \sqrt{|v|}},$$ 
and, by Lemma~\ref{lem:containers:contain:most:of:v}, we have 
$$\big| \big\{ i \in [n] : v_i \not\in C\big( F(v_U) \big) \big\} \big| \le \frac{n}{4}.$$ 
Thus, setting $B(v) := C\big( F(v_U) \big)$, we obtain a set in $\cC$ for which the properties~\eqref{eq:containers:properties} hold.  

We will choose the sets $Y$ and $U$ in the next two lemmas. In each case we simply choose a random set of the correct density. We will say that $R$ is a \emph{$q$-random} subset of a set $S$ if each element of $S$ is included in $R$ independently at random with probability $q$. 

\begin{lemma}\label{lem:Ydef}
Let $v\in \Z_p^n$ with $|v| \ge 2^{18} \log p$. There exists $Y \subset [n]$, with $n/4 \le |Y| \le n/2$, such that
$$|v_Y|\geq \frac{|v|}{4} \qquad \text{and} \qquad T_\ell(v_Y) \subset T_{8\ell}(v),$$
where $\ell = 2^{-16} |v|$.
\end{lemma}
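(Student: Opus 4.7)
\medskip

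\noindent\textbf{Proof plan.} My plan is to take $Y$ to be a $q$-random subset of $[n]$ for $q = 3/8$ (so the expected size of $Y$ is $3n/8$, comfortably in the middle of the target interval $[n/4,n/2]$), and to verify that all three required properties hold simultaneously with positive probability via three separate concentration estimates plus the union bound. In what follows, set $f_k := \sum_{i=1}^n \| k \cdot v_i / p \|^2$, so that $T_t(v) = \{ k \in \Z_p : f_k \le t \}$, and for each $k \in \Z_p$ define
$$X_k \, := \, \sum_{i \in Y} \big\| k \cdot v_i / p \big\|^2.$$
Each $X_k$ is a sum of independent $[0,1/4]$-valued random variables with $\Ex[X_k] = q f_k = (3/8) f_k$.

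\smallskip

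First I would dispatch the size conditions. Since $\Ex[\,|Y|\,] = 3n/8$ and $\Ex[\,|v_Y|\,] = 3|v|/8$, and since $n \ge |v| \ge 2^{18} \log p$, a standard Chernoff bound gives
$$\Pr\big( |Y| \notin [n/4, n/2] \big) + \Pr\big( |v_Y| < |v|/4 \big) \,\le\, 4 \exp\big( - c |v| \big) \, \le \, p^{-\omega(1)}$$
for an absolute constant $c>0$; in particular this sum is much less than $1/2$.

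\smallskip

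The main step is the containment $T_\ell(v_Y) \subset T_{8\ell}(v)$, which (taking contrapositives) is the assertion that $X_k > \ell$ holds simultaneously for every $k \in \Z_p$ with $f_k > 8\ell$. For each such $k$ we have $\Ex[X_k] > 3\ell$, so by the multiplicative Chernoff inequality applied to the sum $X_k$ of independent $[0,1/4]$-variables (with deviation parameter $\delta = 2/3$),
$$\Pr\big( X_k \le \ell \big) \,\le\, \Pr\bigg( X_k \le \frac{1}{3} \Ex[X_k] \bigg) \,\le\, \exp\bigg( - \frac{(2/3)^2 \Ex[X_k]}{2 \cdot (1/4)} \bigg) \,\le\, \exp\bigg( - \frac{8\ell}{3} \bigg).$$
Since $\ell = 2^{-16} |v| \ge 4 \log p$ by assumption, the right-hand side is at most $p^{-10}$, and since there are at most $p$ values of $k$ to consider, the union bound yields
$$\Pr\big( T_\ell(v_Y) \not\subset T_{8\ell}(v) \big) \,\le\, p \cdot p^{-10} \,=\, p^{-9} \, < \, \tfrac{1}{2}.$$

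\smallskip

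Combining the three estimates above, with positive probability the random set $Y$ satisfies $n/4 \le |Y| \le n/2$, $|v_Y| \ge |v|/4$, and $T_\ell(v_Y) \subset T_{8\ell}(v)$ simultaneously, which produces the desired $Y$. The only genuine obstacle is the third step, where one needs the hypothesis $|v| \ge 2^{18} \log p$ to guarantee that $\ell$ is large enough for Chernoff to beat the $p$-term union bound; the size conditions (a) and (b) are then essentially automatic from the same hypothesis.
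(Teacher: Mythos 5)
Your proposal is correct and follows essentially the same route as the paper's proof: a $(3/8)$-random subset $Y$, Chernoff for the two size conditions, and a Chernoff-plus-union-bound argument (using $\Ex[W(k)]\ge 3\ell$ and $\ell \ge 4\log p$) for the containment $T_\ell(v_Y)\subset T_{8\ell}(v)$. The only cosmetic difference is that you invoke the standard multiplicative lower-tail Chernoff bound for $[0,1/4]$-valued summands, whereas the paper uses a variant stated in a footnote; both give a per-$k$ failure probability $\le p^{-2}$, which is all that is needed.
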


\begin{proof}
Let $Y$ be a $(3/8)$-random subset of $[n]$; we will prove that with positive probability $Y$ has all of the required properties. Since $n \ge |v| \ge 2^{18} \log p \ge 2^{18}$, the properties 
$$\frac{n}{4} \le |Y| \le \frac{n}{2} \qquad \text{and} \qquad |v_Y| \ge \frac{|v|}{4}$$ 
each hold with probability at least $3/4$, by Chernoff's inequality. To bound the probability that $T_\ell(v_Y) \setminus T_{8\ell}(v)$ is non-empty, define a random variable 
$$W(k) := \sum_{i \in Y} \left \| \dfrac{k \cdot v_i}{p} \right \|^2$$
for each $k \in \Z_p$, and observe that, by~\eqref{def:level:sets},
$$k \in T_\ell(v_Y) \;\Leftrightarrow\; W(k) \le \ell \qquad \text{and} \qquad k \not\in T_{8\ell}(v) \;\Rightarrow\; \Ex[W(k)] \ge 3\ell.$$ 
Moreover, by Chernoff's inequality,\footnote{Here we use the following variant of the standard Chernoff inequality: if $X_1,\ldots,X_N$ are iid Bernoulli random variables, and $t_1,\ldots,t_N \in [0,1]$, then $\Pr\big( \sum_{i = 1}^N t_i X_i \le s \big) \le \exp\big( - \Ex[X]/2 + s \big)$.}
$$\Pr\big( k \in T_{\ell}(v_Y) \big) = \Pr\big( W(k) \le \ell \big) \le e^{-\ell/2} \le \frac{1}{p^2}$$
for every $k \not\in T_{8\ell}(v)$, since $\ell \ge 4 \log p$. It follows that
$$\Ex\big[ |T_{\ell}(v_Y) \setminus T_{8\ell}(v)| \big] \le \frac{1}{p},$$
and hence $T_{\ell}(v_Y) \subset T_{8\ell}(v)$ with probability at least $3/4$, as required. 
\end{proof}

Finally, we need to show that a suitable set $U$ exists. 

\begin{lemma}\label{lemma:def:U}
Let $v \in \Z_p^n$. There exists $U \subset [n]$, with $|U| \le m$, such that 
$$|T_{8\ell}(v)| \le 2 \cdot |F(v_U)| \qquad \text{and} \qquad F(v_U) \subset T_t(v),$$
where $\ell = 2^{-16} |v|$ and $t = 2^{-7} n$.
\end{lemma}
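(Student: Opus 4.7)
The plan is to mimic Lemma~\ref{lem:Ydef}: take $U$ to be a $q$-random subset of $[n]$ for a carefully tuned density $q$, and show that each of the three required properties ($|U| \le m$, the inclusion $F(v_U) \subset T_t(v)$, and the size bound $|T_{8\ell}(v)| \le 2|F(v_U)|$) fails with probability strictly less than $1/3$. I would choose $q := 2^{10}(\log p)/n$, which under the standing assumption $n \ge 2^{18}\log p$ satisfies $q \le 2^{-8}$; since $\Ex[|U|] = 2^{10}\log p = m/4$, a standard Chernoff bound makes $|U| > m$ negligibly unlikely. Writing $X_k := \sum_{i \in U} \|k v_i/p\|^2$, so that $k \in F(v_U)$ iff $X_k \le \log p$, this choice of $q$ is calibrated so that $\Ex[X_k]$ sits far below $\log p$ when $k \in T_{8\ell}(v)$ and far above $\log p$ when $k \notin T_t(v)$: explicitly (using $|v| \le n$), $\Ex[X_k] \le 8q\ell \le 2^{-3}\log p$ in the first case and $\Ex[X_k] > qt = 8\log p$ in the second. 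The only reason such a $q$ exists is the large gap $8\ell \ll t$, i.e.\ $2^{-13}|v| \ll 2^{-7}n$.

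Translating these expectations into tail bounds handles the two nontrivial events. For the containment $F(v_U) \subset T_t(v)$, the rescaled weights $4\|k v_i/p\|^2$ lie in $[0,1]$, so the Chernoff bound from the footnote gives, for each $k \notin T_t(v)$,
$$\Pr\big( X_k \le \log p \big) = \Pr\big( 4X_k \le 4\log p \big) \le \exp\big( -\Ex[4X_k]/2 + 4\log p \big) \le e^{-12\log p} = p^{-12},$$
and a union bound over the at most $p$ such $k$ yields $\Pr\big( F(v_U) \not\subset T_t(v) \big) \le p^{-11}$. For the size bound, I would apply Markov twice: first, for each $k \in T_{8\ell}(v)$, $\Pr(k \notin F(v_U)) \le \Ex[X_k]/\log p \le 1/8$, hence $\Ex\big[|T_{8\ell}(v) \setminus F(v_U)|\big] \le |T_{8\ell}(v)|/8$; second, Markov on this random variable bounds $\Pr\big( |T_{8\ell}(v) \setminus F(v_U)| > |T_{8\ell}(v)|/2 \big)$ by $1/4$, and when this event fails one has $|F(v_U)| \ge |F(v_U) \cap T_{8\ell}(v)| \ge |T_{8\ell}(v)|/2$, as required.

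Taking the union bound of the three failure events gives a positive probability of success, so a $U$ with all the required properties exists. I do not foresee any real obstacle: the asymmetry in technique (Chernoff for containment, Markov for the size bound) is forced by the fact that containment must survive a union bound over $\Z_p$ while the size bound is a single aggregate inequality, and the entire argument hinges only on the large ratio $t/(8\ell) \ge 2^6$ that makes a simultaneous choice of $q$ feasible.
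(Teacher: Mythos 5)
Your proof is correct and follows essentially the same route as the paper's: take $U$ to be a sparse random subset of $[n]$ (you use density $2^{10}(\log p)/n$, the paper uses $m/(2n) \approx 2^{11}(\log p)/n$), control $|U|$ by Chernoff, control the size bound via Markov applied twice (on $\Pr(k \notin F(v_U))$ and then on $|T_{8\ell}(v)\setminus F(v_U)|$), and control the containment $F(v_U)\subset T_t(v)$ by a per-$k$ Chernoff tail estimate followed by a union bound over $\Z_p$. The only cosmetic difference is that you rescale the weights $\|kv_i/p\|^2$ by a factor of $4$ before invoking the footnote's Chernoff variant, which is harmless but not needed since the weights already lie in $[0,1/4]\subset[0,1]$.
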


\begin{proof}
Let $U$ be a $( m / 2n )$-random subset of $[n]$. We will prove that the claimed properties hold simultaneously with positive probability. Note first that $|U| \le m$ with probability at least $3/4$, by Chernoff's inequality, since $m = \lfloor 2^{12} \log p \rfloor \ge 2^{12}$. 

Next, we show that $|T_{8\ell}(v) \setminus F(v_U)| \le |T_{8\ell}(v)|/2$ with probability at least $1/2$. Observe first that, for every $k \in \Z_p$, 
$$\Pr\big( k \not\in F(v_U) \big) = \Pr\left( \sum_{i \in U} \left\|\frac{k \cdot v_i}{p}\right\|^2 > \log p \right) \le \frac{1}{\log p} \cdot \Ex\left[ \sum_{i \in U} \left\|\frac{k \cdot v_i}{p}\right\|^2 \right],$$
by Markov's inequality. Now, if $k \in T_{8\ell}(v)$, then 
$$\frac{1}{\log p} \cdot \Ex\left[ \sum_{i \in U} \left\|\frac{k \cdot v_i}{p}\right\|^2 \right] = \frac{m}{2n \log p} \sum_{i = 1}^n \left\|\frac{k \cdot v_i}{p}\right\|^2 \le \frac{8m\ell}{2n\log p} \le \frac{1}{4},$$ 
since $m \le 2^{12} \log p$ and $\ell = 2^{-16} |v| \le 2^{-16} n$. It follows that
$$\Pr\bigg( |T_{8\ell}(v) \setminus F(v_U)| \ge \frac{|T_{8\ell}(v)|}{2} \bigg) \le \frac{2}{|T_{8\ell}(v)|} \cdot \Ex\big[ |T_{8\ell}(v) \setminus F(v_U)| \big] \le \frac{1}{2},$$
by Markov's inequality, as claimed.

Finally, to bound the probability that $F(v_U) \setminus T_t(v)$ is non-empty, we repeat the argument used in the proof of Lemma~\ref{lem:Ydef}. To be precise, we define a random variable 
$$W(k) := \sum_{i \in U} \left \| \dfrac{k \cdot v_i}{p} \right \|^2$$
for each $k \in \Z_p$, and observe that, by~\eqref{def:level:sets},
$$k \in F(v_U) \;\Leftrightarrow\; W(k) \le \log p \qquad \text{and} \qquad k \not\in T_t(v) \;\Rightarrow\; \Ex[W(k)] \ge 2^{-8} m.$$ 
Recalling that $m = \lfloor 2^{12} \log p \rfloor$, it follows by Chernoff's inequality that
$$\Pr\big( k \in F(v_U) \big) = \Pr\big( W(k) \le \log p \big) \le \frac{1}{p^2}$$
for every $k \not\in T_t(v)$, and hence
$$\Pr\big( F(v_U) \not\subset T_t(v) \big) \le \Ex\big[ |F(v_U) \setminus T_t(v)| \big] \le \frac{1}{p}.$$
It follows that, with positive probability, the random set $U$ satisfies 
$$|U| \le m, \qquad |T_{8\ell}(v)| \le 2 \cdot |F(v_U)| \qquad \text{and} \qquad F(v_U)  \subset T_t(v),$$ 
as required. 
\end{proof}

As observed above, it is now straightforward to complete the proof of Theorem~\ref{thm:containers}.

\begin{proof}[Proof of Theorem~\ref{thm:containers}]
Let $\cC$ be as defined in~\eqref{def:C}, and note that 
$$|\cC| \le p^m \le \exp\big( 2^{12} (\log p)^2 \big).$$ 
For each $v \in \Z_p^n$ with $\rho(v) \ge 4/p$ and $|v| \ge 2^{18} \log p$, let $Y$ and $U$ be the sets given by Lemmas~\ref{lem:Ydef} and~\ref{lemma:def:U} respectively, and define $B(v) := C\big( F(v_U) \big)$. 

Now, we have $n/4 \le |Y| \le n/2$, by Lemma~\ref{lem:Ydef}, and 
$$\big| \big\{ i \in [n] : v_i \not\in B(v) \big\} \big| \le \frac{n}{4},$$
by Lemma~\ref{lem:containers:contain:most:of:v}, since $F(v_U) \subset T_t(v)$, where $t = 2^{-7} n$, by Lemma~\ref{lemma:def:U}. Finally, we have 
$$|B(v)| \le \frac{4p}{|F(v_U)|} \le \frac{2^{15}}{\rho(v_Y) \sqrt{|v|}} \cdot \frac{|T_\ell(v_Y)|}{|F(v_U)|} \le \frac{2^{16}}{\rho(v_Y) \sqrt{|v|}},$$
by Lemmas~\ref{lem:containers:small:first:bound}--\ref{lemma:def:U}, since $|T_\ell(v_Y)| \le |T_{8\ell}(v)| \le 2 \cdot |F(v_U)|$. This completes the proof of the inverse Littlewood--Offord theorem. 
\end{proof}

\section{Applying the inverse Littlewood--Offord theorem}\label{sec:iterate}

In this section we will use our inverse Littlewood--Offord theorem to prove Lemma~\ref{thm:qrho}. Let us fix a sufficiently large integer $n \in \N$ and a prime $2 < p \le \exp\big( 2^{-10} \sqrt{n} \big)$ throughout the section. Recall that $\beta \ge 4/p$, that
$$q_n(\beta) = \max_{w \in \Z_p^n}\Pr\big( \exists \, v \in \Z_p^n \setminus \{0\} \,:\, M_n \cdot v = w \text{ and } \rho(v) \ge \beta \big),$$
and that our aim is to prove that $q_n(\beta) \le 2^{-n/4}$. We shall do so by using Theorem~\ref{thm:containers} to partition the vectors $v \in \Z_p^n$ with $\rho(v) \ge \beta$ and $|v| \ge 2^8 \sqrt{n}$ into a collection of `fibres', and then applying a simple first moment argument inside each fibre. Vectors with small support will require a separate (and much simpler) argument, so let us begin by dealing with those. For each $w \in \Z_p^n$, define 
$$Q(w) := \big| \big\{ v \in \Z_p^n \setminus \{0\} \,:\, M_n \cdot v = w \text{ and } |v| < 2^8 \sqrt{n} \big\} \big|.$$
Our first lemma bounds the expected size of $Q(w)$. 

\begin{lemma}\label{lem:small:support}
For every $w \in \Z_p^n$, 
$$\Ex\big[ Q(w) \big] \le 2^{-n/2}.$$
\end{lemma}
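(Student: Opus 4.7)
The plan is to use the first moment method: write $\Ex[Q(w)] = \sum_v \Pr(M_n \cdot v = w)$, where $v$ ranges over nonzero vectors in $\Z_p^n$ with $|v| < 2^8 \sqrt n$, bound each summand using an independence argument, and then sum by grouping the $v$'s according to the size of their support.

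The key per-vector bound will come from the following independence observation. Fix $v$ with support $S := \{j \in [n] : v_j \ne 0\}$ of size $k$. The $(n-k)k$ entries $\{M_{ij} : i \in [n] \setminus S,\, j \in S\}$ are mutually independent $\pm 1$ random variables: none is a diagonal entry, and for distinct rows $i, i' \in [n] \setminus S$ the unordered index pairs $\{i,j\}$ (with $j \in S$) and $\{i', j'\}$ (with $j' \in S$) are necessarily distinct, since $i, i' \notin S$ while $j, j' \in S$. Consequently the $n-k$ events $\big\{ \sum_{j \in S} M_{ij} v_j = w_i \big\}$ for $i \in [n] \setminus S$ are mutually independent, each of probability at most $\rho(v)$; and since $v$ has a coordinate $v_{j_0} \ne 0$ with $p > 2$, flipping $u_{j_0}$ shifts $\sum_i u_i v_i$ by $\pm 2 v_{j_0} \ne 0$ in $\Z_p$, which yields $\rho(v) \le 1/2$. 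Combining these gives
$$\Pr(M_n \cdot v = w) \le 2^{-(n-k)}.$$

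To finish I would sum by support size, using the Stirling-type estimate $\binom{n}{k}(p-1)^k \le (enp/k)^k$, to obtain
$$\Ex[Q(w)] \le \sum_{k = 1}^{2^8 \sqrt n - 1} \bigg(\frac{enp}{k}\bigg)^k 2^{-(n-k)}.$$
Under the assumption $\log p \le 2^{-10}\sqrt n$, the summand is easily seen to be increasing in $k$ on the relevant range (its derivative in the continuous relaxation is $\log(2np/k) > 0$ for $k \le 2^8 \sqrt n$), so its maximum is attained near $k = 2^8 \sqrt n$. A routine estimate at that endpoint, using $k \log p \le n/4$ and $k \log(n/k) = o(n)$, shows each term is bounded by $\exp\big(n/4 - n \log 2 + o(n)\big)$, which is comfortably smaller than $2^{-n/2}$; multiplying by the $O(\sqrt n)$ summands preserves the bound.

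The main (and essentially only) subtle point is the use of the sharp binomial estimate $\binom{n}{k} \le (en/k)^k$ in place of the crude $\binom{n}{k} \le n^k$: the extra $k^{-k}$ factor is what allows the $2^{-(n-k)}$ term to dominate the growth $(p-1)^k = \exp(\Theta(k\sqrt n))$. Without it the first moment would fail by a constant factor in the exponent, so no appeal to Hal\'asz's inequality is needed for this lemma; the bound $\rho(v) \le 1/2$ already suffices.
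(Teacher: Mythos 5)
Your proof is correct and follows the same first-moment strategy as the paper; the one genuine difference lies in the per-vector probability bound. The paper picks a single coordinate $k$ with $v_k\ne 0$, conditions on every entry of $M_n$ outside the $k$th row and column, and uses the fresh randomness in all $n$ entries of that row (including $m_{kk}$) to obtain $\Pr(M_n\cdot v=w)\le 2^{-n}$ uniformly over nonzero $v$. You instead reveal the off-diagonal block $M_{([n]\setminus S)\times S}$ with $S=\supp(v)$, giving $n-|S|$ independent coordinate equations and the weaker bound $2^{-(n-|S|)}$; the loss of a factor $2^{|S|}=\exp(O(\sqrt{n}))$ is harmless in the final estimate. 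Your independence check (distinct unordered index pairs because $i\notin S$, $j\in S$) and the elementary $\rho(v)\le 1/2$ for $p>2$ are both sound, and the monotonicity-in-$k$ argument for the summand is fine. One inaccuracy in your closing remark: the sharp estimate $\binom{n}{k}\le(en/k)^k$ is not essential. Since $k=O(\sqrt{n})$, replacing $\log(en/k)$ by $\log n$ changes the exponent by only $O(\sqrt{n}\log n)=o(n)$; the decisive comparison is between $k\log p\le n/4$ and the $n\log 2$ from the probability bound, and even the crude $\binom{n}{k}\le n^k$ gives the same conclusion (the paper's count $\binom{n}{2^8\sqrt n}p^{2^8\sqrt n}$ is of the same order). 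This misattribution does not affect the validity of your argument.
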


\begin{proof}
Fix $w \in \Z_p^n$; the lemma is an easy consequence of the following claim.   

\medskip
\noindent \textbf{Claim:} If $v \in \Z_p^n \setminus \{0\}$, then $\Pr\big( M_n \cdot v = w \big) \le 2^{-n}$.

\begin{proof}[Proof of Claim]
Choose $k \in [n]$ such that $v_k \neq 0$, and reveal the entire matrix $M_n$ except for the $k$th row and the $k$th column. Observe that if $M_n \cdot v = w$, then 
\begin{equation}\label{eq:roughprob}
m_{ik} v_k = w_i - \sum_{j \ne k} m_{ij} v_j
\end{equation}
for each $i \in [n]$, where $m_{ij}$ are the entries of $M_n$. Now, for any choice of the entries $m_{ij}$ with $j \ne k$, the event~\eqref{eq:roughprob} has probability at most $1/2$, and these events are independent for different values of $i \ne k$. Finally, having revealed the entire matrix except for $m_{kk}$, the event~\eqref{eq:roughprob} for $i = k$ has probability at most $1/2$, so $\Pr\big( M_n \cdot v = w \big) \le 2^{-n}$, as claimed.
\end{proof}

Now, since there are at most ${n \choose k} p^k$ vectors $v \in \Z_p^n \setminus \{0\}$ with $|v| < k$, and recalling that $p \le \exp\big( 2^{-10} \sqrt{n} \big)$ and $n$ is sufficiently large, the claim implies that 
$$\Ex\big[ Q(w) \big] \le {n \choose 2^8 \sqrt{n}} p^{2^8 \sqrt{n}} \cdot 2^{-n} \le 2^{-n/2}$$
as required. 
\end{proof}

From now on, we will therefore restrict our attention to the vectors with large support:
$$\cV := \big\{ v \in \Z_p^n :\, \rho(v) \ge \beta, \, |v| \ge 2^8\sqrt{n} \big\}.$$
To deal with these vectors, we will define a function 
$$f \colon \cV \rightarrow \cX := \Big\{ \big( X_i,Y_i,B_i \big)_{i=1}^\infty : X_i,Y_i \subset [n] \text{ and } B_i \subset \Z_p \text{ for each } i \in \N \Big\},$$ 
using Theorem~\ref{thm:containers}; the `fibres' forming our partition of $\cV$ will be exactly the fibres $f^{-1}(S)$ of the function $f$. We will define $f$ using the following algorithm, which takes as its input a vector $v \in \cV$, and outputs an element of $\cX$. 

\begin{alg}\label{alg:containers}
Let $v \in \mathcal{V}$. At the $k$th step, if the process has not yet ended, we will have constructed a sequence $(X_i,Y_i,B_i)_{i=1}^{k-1}$ with $X_i,Y_i \subset [n]$ and $B_i \subset \Z_p$ for each $i \in [k-1]$. In this case, set
$$Z_k := [n] \setminus \bigcup_{i=1}^{k-1} X_i,$$
and do the following:
\begin{itemize}
\item[1.] If $|v_{Z_k}| \ge 2^8\sqrt{n}$ then we apply Theorem~\ref{thm:containers}, and set $Y_k := Y(v_{Z_k})$, $B_k := B(v_{Z_k})$, and 
\begin{equation}\label{def:X:sets}
X_k := \big\{ i \in Z_k \setminus Y_k \,:\, v_i \in B_k \big\}.
\end{equation}
Set $k \to k + 1$ and repeat the process.
\item[2.] If $|v_{Z_k}| < 2^8\sqrt{n}$, then we set $k^* = k^*(v) := k - 1$ and 
$$X_j = Y_j = B_j = \emptyset$$
for every $j \ge k$. The process terminates, and we set $f(v) := \big( X_i,Y_i,B_i \big)_{i=1}^\infty$.
\end{itemize}
\end{alg}

Define $\mathcal{U} := \big\{ f(v) : v \in \mathcal{V} \big\}$. Theorem~\ref{thm:containers} implies the following upper bound on $|\cU|$.


\begin{lemma}\label{lem:Usize}
$$|\cU| \le n^{n/64}.$$
\end{lemma}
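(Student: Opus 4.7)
The plan is to enumerate the outputs of Algorithm~\ref{alg:containers} directly, using the fact that at each step the triple $(X_k,Y_k,B_k)$ comes from a bounded pool and that the sizes $|Z_k|$ decrease geometrically. So there are essentially two ingredients: a bound $k^* = O(\log n)$ on the number of iterations, and a step-by-step count of the choices.

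First I would establish the key inequality $|X_k| \ge |Z_k|/4$. To apply Theorem~\ref{thm:containers} to $v_{Z_k}$, observe that $|v_{Z_k}| \ge 2^8 \sqrt{n} \ge 2^{18} \log p$, using the hypothesis $p \le \exp(2^{-10}\sqrt{n})$. The theorem then yields $|Y_k| \le |Z_k|/2$ and $|\{i \in Z_k : v_i \notin B_k\}| \le |Z_k|/4$, so from $X_k = \{i \in Z_k \setminus Y_k : v_i \in B_k\}$ we deduce
$$|X_k| \ge |Z_k \setminus Y_k| - |\{i \in Z_k : v_i \notin B_k\}| \ge \frac{|Z_k|}{2} - \frac{|Z_k|}{4} = \frac{|Z_k|}{4},$$
and hence $|Z_{k+1}| \le (3/4)|Z_k|$. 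Since the algorithm only continues while $|v_{Z_k}| \ge 2^8\sqrt{n}$ and $|v_{Z_k}| \le |Z_k|$, this forces the stopping time to satisfy $k^* \le K$ for some $K = O(\log n)$.

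Next I would count the choices. Given a prefix $(X_i,Y_i,B_i)_{i<k}$, the set $Z_k$ is determined, so there are at most $2^{|Z_k|}$ choices each for $X_k \subset Z_k$ and $Y_k \subset Z_k$, and at most $|\cC|$ choices for $B_k \in \cC$. Summing over $k^* \in \{0,1,\ldots,K\}$ and using $|Z_k| \le (3/4)^{k-1} n$,
$$|\cU| \le (K+1) \prod_{k=1}^{K} 2^{2|Z_k|} \cdot |\cC|^K \le (K+1) \cdot 2^{8n} \cdot |\cC|^K,$$
since $\sum_{k \ge 1} |Z_k| \le \sum_{j \ge 0} (3/4)^j n = 4n$. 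From~\eqref{eq:number:of:containers} and the hypothesis on $p$, $\log |\cC| \le 2^{12}(\log p)^2 \le n/256$, so $|\cC|^K \le \exp(Kn/256) \le n^{n/128}$; combined with $2^{8n} = n^{O(n/\log n)} = n^{o(n)}$, this gives $|\cU| \le n^{n/64}$ for $n$ sufficiently large.

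I do not foresee any serious obstacle: once the geometric decay $|Z_{k+1}| \le (3/4)|Z_k|$ is in hand, everything else is bookkeeping. The only delicate point is to verify that the two main contributions---the $2^{8n}$ coming from subset choices for $X_k$ and $Y_k$, and the $n^{n/128}$ coming from $|\cC|^K$---both leave enough slack to absorb the polylogarithmic factors into the target $n^{n/64}$.
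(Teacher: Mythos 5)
Your proof is correct and follows essentially the same route as the paper: establish $|X_k| \ge |Z_k|/4$ via Theorem~\ref{thm:containers}, deduce the geometric decay $|Z_{k+1}| \le \tfrac34 |Z_k|$ and hence $k^* = O(\log n)$, then count $2^{|Z_k|}$ choices for each of $X_k,Y_k$ and $|\cC|\le \exp(2^{-8}n)$ choices for $B_k$ per step. The bookkeeping at the end is packaged slightly differently (you isolate the three factors $(K+1)$, $2^{8n}$, $|\cC|^K$, while the paper folds them into a single $\exp(\cdot)$), but the bounds are identical.
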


\begin{proof}
We claim first that, for each $k \in \N$, either $|v_{Z_k}| < 2^8\sqrt{n}$, or
\begin{equation}\label{eq:Zk:bound}
|Z_k| \le \left( \frac{3}{4} \right)^{k-1} n.
\end{equation}
Indeed, by Observation~\ref{obs:rho:monotone} we have $\rho(v_{Z_k}) \ge \rho(v) \ge \beta \ge 4/p$ for every $v \in \cV$, and therefore, if $|v_{Z_k}| \ge 2^8\sqrt{n} \ge 2^{18} \log p$, it follows from Theorem~\ref{thm:containers} that $|Y_k| \le |Z_k| / 2$ and 
\begin{equation}\label{eq:Xk:bound}
\big| Z_k \setminus \big( X_k \cup Y_k \big) \big| \le \big| \big\{ i \in Z_k : v_i \notin B_k \big\} \big| \le \frac{|Z_k|}{4}.
\end{equation}
Hence $|X_k| \ge |Z_k|/4$, and~\eqref{eq:Zk:bound} follows. In particular, this implies that $k^*(v) \le 2 \log n$.

Now, given $(X_i,Y_i,B_i)_{i=1}^{k-1}$, there are at most $2^{|Z_k|}$ choices for each of the sets $X_k$ and $Y_k$ (since they are subsets of $Z_k$), and by~\eqref{eq:number:of:containers} there are at most 
$$\exp\Big( 2^{12} \big( \log p \big)^2 \Big) \le \exp\big( 2^{-8} n \big)$$
choices for $B_k$. It follows that the total number of choices for $f(v)$ is at most
$$\exp\bigg( 2^{-7} n \log n + 2 \sum_{k = 1}^\infty \left( \frac{3}{4} \right)^{k-1} n \bigg) \le \exp\big( 2^{-6} n \log n \big) = n^{n/64},$$
as required, since $n$ is sufficiently large.
\end{proof}

We will bound, for each sequence $S \in \cU$, the probability that some vector $v \in \cV$ with $f(v) = S$ satisfies $M_n \cdot v = w$, and then sum over $S \in \cU$. To do so, for each $S \in \cU$ and $w \in \Z_p^n$, let us define a random variable
$$Q(S,w) := \big| \big\{ v \in \cV \,:\, f(v) = S \text{ and } M_n \cdot v = w \big\} \big|.$$
The next lemma bounds the expected size of $Q(S,w)$.

\begin{lemma}\label{lem:key:calc}
If $S = \big( X_i,Y_i,B_i \big)_{i=1}^\infty \in \cU$ and $w \in \Z_p^n$, then
\begin{equation}\label{eq:key:calc}
\Ex\big[ Q(S,w) \big] \, \le \, \bigg( \frac{2^{56}}{n} \bigg)^{n/16}.
\end{equation}
\end{lemma}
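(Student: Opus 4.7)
My plan is to write
$$\Ex[Q(S,w)] = \sum_{v \in \cV,\, f(v) = S} \Pr\big( M_n \cdot v = w \big),$$
control each probability via a row-revelation argument, and then carry out the sum over admissible $v$ by iteratively applying Theorem~\ref{thm:containers}. The central inequality I aim to prove is
$$\Pr\big( M_n \cdot v = w \big) \le \prod_{k=1}^{k^*} \rho(v_{Y_k})^{|X_k|} \qquad \text{for every } v \in \cV \text{ with } f(v) = S.$$

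To prove this probability bound I reveal the rows of $M_n$ in the batches $X_1,X_2,\ldots,X_{k^*}$ dictated by $S$. The crucial point is that at step $k$ the submatrix $M_{X_k \times Y_k}$ consists of $|X_k|\cdot|Y_k|$ genuinely fresh iid $\pm 1$ entries: because $X_k, Y_k \subseteq Z_k$ are disjoint and disjoint from the previously processed rows $\bigcup_{l<k} X_l$, neither these entries nor their symmetric counterparts $m_{ji}$ have been revealed. Conditioning on every entry of $M_n$ outside $M_{X_k \times Y_k}$, each equation $(M_n \cdot v)_i = w_i$ with $i\in X_k$ reduces to $\sum_{j \in Y_k} m_{ij} v_j = c_i$ for a fixed constant $c_i$; by definition of $\rho$ this holds with probability at most $\rho(v_{Y_k})$, and the events are conditionally independent across $i\in X_k$ because they involve disjoint entries. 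Chaining with the chain rule yields the product.

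Next I carry out the sum over $v$. Partition $[n] = \bigsqcup_k X_k \sqcup Z_{k^*+1}$ and sum in the order $v_{X_1}, v_{X_2}, \ldots, v_{X_{k^*}}, v_{Z_{k^*+1}}$. The constraint $f(v) = S$ forces $v_i \in B_k$ for each $i \in X_k$ and $|v_{Z_{k^*+1}}| < 2^8\sqrt{n}$ (the algorithm's stopping condition). The key observation is that for each $k$, the factors $\rho(v_{Y_l})^{|X_l|}$ with $l \ge k$ do not depend on $v_{X_k}$, since $Y_l \subseteq Z_l \setminus X_l \subseteq Z_{k+1}$ whenever $l \ge k$, so $Y_l \cap X_k = \emptyset$. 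Hence summing over $v_{X_k}$ contributes precisely $|B_k|^{|X_k|}$, and Theorem~\ref{thm:containers}, combined with $|v_{Z_k}| \ge 2^8\sqrt{n}$, gives
$$|B_k|^{|X_k|}\, \rho(v_{Y_k})^{|X_k|} \le \bigg( \frac{2^{16}}{\sqrt{|v_{Z_k}|}} \bigg)^{|X_k|} \le (2^{12} n^{-1/4})^{|X_k|}.$$

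Iterating over $k = 1,\ldots,k^*$ produces the accumulated factor $(2^{12} n^{-1/4})^{\sum_k |X_k|}$. Since $|X_1| \ge n/4$ (from~\eqref{eq:Xk:bound}) and $2^{12} n^{-1/4} \le 1$ for $n$ large, this is at most $(2^{12} n^{-1/4})^{n/4} = (2^{48}/n)^{n/16}$. Finally, the sum over $v_{Z_{k^*+1}}$ with support less than $2^8 \sqrt{n}$ is at most $n^{2^8\sqrt{n}}\, p^{2^8\sqrt{n}} \le 2^{n/2}$, by standard binomial estimates and $p \le \exp(2^{-10}\sqrt{n})$. Multiplying yields $2^{n/2} \cdot (2^{48}/n)^{n/16} = (2^{56}/n)^{n/16}$, as required. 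The main obstacle will be carefully justifying the probability bound in the face of the symmetry of $M_n$ (specifically, that $M_{X_k\times Y_k}$ is truly fresh at step $k$); once that is in hand, the iterated summation is essentially bookkeeping, driven by the fact that the ordering $X_1, X_2, \ldots$ cooperates with the constraint $Y_l \cap X_k = \emptyset$ for $l \ge k$.
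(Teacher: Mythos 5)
Your proposal is correct and follows essentially the same strategy as the paper: establish the probability bound $\Pr(M_n\cdot v=w)\le\prod_k\rho(v_{Y_k})^{|X_k|}$ via a chain-rule / row-revelation argument exploiting the disjointness of $X_k$ and $Y_k$ inside $Z_k$, then combine with the count of admissible $v$ and the container bound $|B_k|\rho(v_{Y_k})\le 2^{16}/\sqrt{|v_{Z_k}|}\le 2^{12}n^{-1/4}$. The only presentational difference is that you carry out a nested summation over $v_{X_1},v_{X_2},\ldots$ in order, which requires checking $Y_l\cap X_k=\emptyset$ for all $l\ge k$; the paper instead bounds the number of vectors with $f(v)=S$ once and for all by $\binom{n}{2^8\sqrt n}p^{2^8\sqrt n}\prod_i|B_i|^{|X_i|}$, multiplies by the uniform probability bound, and takes a max over $v$, which sidesteps that disjointness check. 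Both versions are valid, and the final arithmetic ($(2^{12}n^{-1/4})^{n/4}\cdot 2^{n/2}=(2^{56}/n)^{n/16}$) matches.
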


\begin{proof}
If $f(v) = S$, then we have $v_j \in B_i$ for every $j \in X_i$, and $|v_{Z_{k^*+1}}| < 2^8\sqrt{n}$. There are therefore at most
$${n \choose 2^8\sqrt{n}} \cdot p^{2^8\sqrt{n}} \cdot \prod_{i = 1}^{k^*} |B_i|^{|X_i|}$$
vectors $v \in \cV$ with $f(v) = S$. We claim that, for each such vector $v$, 
\begin{equation}\label{eq:prob:in:a:container}
\Pr\big( M_n \cdot v = w \big) \le \, \prod_{i=1}^{k^*} \max_{u(i) \in \Z_p^{|X_i|}} \Pr\big( M_{X_i \times Y_i}\cdot v_{Y_i} = u(i) \big) = \prod_{i=1}^{k^*} \rho(v_{Y_i})^{|X_i|}.
\end{equation}
To prove~\eqref{eq:prob:in:a:container}, recall from~\eqref{def:X:sets} that 
$$X_i \cap Y_i = \emptyset \qquad \text{and} \qquad X_i \cap X_j = Y_i \cap X_j = \emptyset$$ 
for every $i \in [k^*]$ and every $1 \le j < i$, since $X_i, Y_i \subset Z_i$. It follows that
$$\Pr\bigg( M_{X_i \times [n]} \cdot v = w_{X_i} \; \Big | \; \bigcap_{j = 1}^{i-1} M_{X_j \times [n]} \cdot v = w_{X_j} \bigg) \le \max_{u(i) \in \Z_p^{|X_i|}} \Pr\big( M_{X_i \times Y_i} \cdot v_{Y_i} = u(i) \big)$$ 
for every $i \in [k^*]$, and moreover the entries of $M_{X_i \times Y_i}$ are all independent. This proves~\eqref{eq:prob:in:a:container}, and summing over $v \in \cV$ with $f(v) = S$ gives
$$\Ex\big[ Q(S,w) \big] \, \le \, \max_{v \in \cV \,:\, f(v) = S} 
{n \choose 2^8\sqrt{n}} \cdot p^{2^8\sqrt{n}} \cdot \prod_{i = 1}^{k^*} \Big( |B_i| \cdot \rho(v_{Y_i}) \Big)^{|X_i|}.$$
To deduce~\eqref{eq:key:calc}, recall from Theorem~\ref{thm:containers} and Algorithm~\ref{alg:containers} that, for every $v \in \cV$ such that $f(v) = S$,
$$|B_i| \le \frac{2^{16}}{ \rho(v_{Y_i}) \sqrt{|v_{Z_i}|}} \le \frac{2^{12}}{ \rho(v_{Y_i}) n^{1/4}}$$
for each $i \in [k^*]$, since $|v_{Z_i}| \ge 2^8 \sqrt{n}$. Since $n \ge 2^{81}$ and $p \le \exp\big( 2^{-10} \sqrt{n} \big)$, and recalling from~\eqref{eq:Xk:bound} that we have $|X_1| \ge n/4$ (since $|v| \ge 2^8 \sqrt{n}$ for every $v \in \cV$), it follows that
$$\Ex\big[ Q(S,w) \big] \, \le \, {n \choose 2^8\sqrt{n}} \cdot p^{2^8\sqrt{n}} \cdot \bigg( \frac{2^{12}}{n^{1/4}} \bigg)^{\sum_i |X_i|} \le \bigg( \frac{2^{14}}{n^{1/4}} \bigg)^{n/4} = \bigg( \frac{2^{56}}{n} \bigg)^{n/16},$$
as required.
\end{proof}


Completing the proof of Lemma~\ref{thm:qrho}, and hence of Theorem~\ref{thm:main}, is now straightforward.

\begin{proof}[Proof of Lemma~\ref{thm:qrho}]
By Lemma \ref{lem:small:support}, for each $w\in \Z_p^n$ the probability that there exists $v \in \Z_p^n \setminus \{0\}$ such that $|v| < 2^8\sqrt{n}$ and $M_n \cdot v = w$ is at most $2^{-n/2}$, and hence
$$q_n(\beta) \le 2^{-n/2} + \sum_{S \in \cU} \max_{w\in \Z_p^n} \Pr\Big(\exists \, v \in \cV \,:\,  f(v) = S \text{ and } M_n \cdot v = w \Big).$$ 
Now, by Lemma~\ref{lem:key:calc}, we have 
$$\Pr\Big(\exists \, v \in \cV \,:\,  f(v) = S \text{ and } M_n \cdot v = w \Big) \, \le \, \bigg( \frac{2^{56}}{n} \bigg)^{n/16}$$
for every $S \in \cU$ and $w\in \Z_p^n$, and hence, by Lemma~\ref{lem:Usize},  
$$q_n(\beta) \le 2^{-n/2} + n^{n/64} \bigg( \frac{2^{56}}{n} \bigg)^{n/16} \le \, 2^{-n/4}$$
since $n$ is sufficiently large. This completes the proof of the lemma. 
\end{proof}

As observed in Section~\ref{sec:overview}, Lemmas~\ref{lem:sufftoshow} and~\ref{thm:qrho} together imply Theorem~\ref{thm:main}. 

{\setstretch{1.19}

\section*{Acknowledgement}

We would like to thank Asaf Ferber for a helpful conversation about the proof in~\cite{FeJ}, and the anonymous referees for their careful reading and many helpful suggestions.

}

\appendix

{\setstretch{1.2}

\section{Proof of Lemma~\ref{lem:sufftoshow}}\label{app:red}

In this appendix we will provide a proof of Lemma~\ref{lem:sufftoshow}, which allowed us to reduce the problem of bounding the probability that $\det(M_n) = 0$ to the problem of bounding $q_n(\beta)$. The proof given below is essentially contained in the paper of Ferber and Jain~\cite{FeJ}, and several of the key lemmas appeared in the papers of Costello, Tao and Vu~\cite{CTV} and Nguyen~\cite{N12}. However, for the benefit of the reader who is unfamiliar with the area, and since Lemma~\ref{lem:sufftoshow} was not stated explicitly in~\cite{FeJ}, we will provide the details in full. 
We begin by giving an overview of the proof.

\subsection{Overview of the proof of Lemma~\ref{lem:sufftoshow}}\label{app:overview:sec}

It will be convenient in this section to work over $\F_p$; in particular, we will consider the entries of $M_n$ as elements of $\F_p$, noting that doing so can only increase the probability that $M_n$ is singular. Observe also that 
\begin{equation}\label{app:qdef}
q_n(\beta) = \max_{w \in \F_p^n} \Pr\Big( \exists \, v \in \F_p^n \setminus \{0\}: M_n \cdot v = w \text{ and } \rho(v) \ge \beta \Big),
\end{equation}
where now $M_n$ is a matrix over $\F_p$. 

Let us write $\rank(M)$ for the rank of a matrix $M$ over $\F_p$, and $M_{n-1}$ for the random symmetric matrix obtained by removing the first row and column from $M_n$. The following lemma, which was proved by Nguyen (see~\cite[Section~2]{N12}), allows us to restrict our attention to matrices $M_n$ such that $\rank(M_{n}) = n-1$ and $\rank(M_{n-1}) \in \{n-2,n-1\}$. 

\begin{lemma}\label{lem:red1}
For every $n \in \N$ and prime $p > 2$,
$$\Pr\big( \det(M_n) = 0 \big) \le 4n \sum_{m = n}^{2n-2} \Pr\Big( \big\{ \rank(M_m) = m - 1 \big\} \cap \big\{ \rank(M_{m-1}) \in \{m-2,m-1\} \big\} \Big).$$
\end{lemma}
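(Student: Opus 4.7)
Set $d_m := m - \rank(M_m)$, so that $\{\det(M_n) = 0\} = \{d_n \ge 1\}$ and the target event in the lemma is $B_m := \{d_m = 1\} \cap \{d_{m-1} \in \{0,1\}\}$; here the matrices $M_m$ form a nested family in which $M_{m-1}$ is a principal submatrix of $M_m$, and the additional symmetric row and column are filled with independent uniform $\pm 1$ entries. The first ingredient is a deterministic rank inequality: since adding one row and column to a symmetric matrix changes the rank by $0$, $1$, or $2$, and the submatrix inequality gives $\rank(M_{m-1}) \le \rank(M_m)$, we have $d_{m-1} - 1 \le d_m \le d_{m-1} + 1$. The upper bound yields the probabilistic forward inequality
\[ \Pr(d_m \ge 2) \le \Pr(d_{m+1} \ge 1), \]
since $d_m \ge 2$ forces $d_{m+1} \ge 1$.

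The second ingredient is a telescoping argument. Combining the forward inequality with the identity $\Pr(d_m \ge 1) = \Pr(d_m = 1) + \Pr(d_m \ge 2)$ gives $\Pr(d_m \ge 1) \le \Pr(d_m = 1) + \Pr(d_{m+1} \ge 1)$, and iterating from $m = n$ up to $m = 2n-2$ yields
\[ \Pr(d_n \ge 1) \le \sum_{m=n}^{2n-2} \Pr(d_m = 1) + \Pr(d_{2n-1} \ge 1). \]

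Each $\Pr(d_m = 1)$ in this bound must then be strengthened to $\Pr(B_m)$, which requires controlling the bad case $\{d_m = 1, d_{m-1} = 2\}$. The deterministic inequality (applied backwards) gives $d_{m-1} = 2 \Rightarrow d_{m-2} \ge 1$, so $\Pr(d_m = 1, d_{m-1} = 2) \le \Pr(d_{m-2} \ge 1)$, which feeds back into a singularity event at a smaller index and can be re-expanded by the same telescoping step. The tail term $\Pr(d_{2n-1} \ge 1)$ is handled by a few more applications of the same forward inequality together with a trivial bound. A careful bootstrap over the $O(n)$ indices in the range accumulates the polynomial factor $4n$ in the statement.

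The main obstacle, as I see it, is the bookkeeping in this final step: the bad contributions at index $m$ are bounded by singularity events at index $m-2$, which themselves must be controlled without falling into circularity, and all of this must terminate within the range $[n,2n-2]$ without blowing up the prefactor. Carrying this out rigorously follows the careful recursive arguments of Nguyen~\cite{N12} and Ferber--Jain~\cite{FeJ}, whose key structural input is the rank-stability of symmetric matrices under principal-submatrix restriction encoded in $|d_m - d_{m-1}| \le 1$.
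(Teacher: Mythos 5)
Your proposal is built on the deterministic rank bound $|d_m - d_{m-1}| \le 1$, and while that inequality is correct, it is far too weak to carry the argument: it only says the rank deficiency changes by at most one when a symmetric row and column are removed or added, with no control over which direction it moves. The forward inequality $\Pr(d_m \ge 2) \le \Pr(d_{m+1} \ge 1)$ therefore only reshuffles singularity events between indices, and the telescoping you derive,
\[
\Pr(d_n \ge 1) \le \sum_{m=n}^{2n-2} \Pr(d_m = 1) + \Pr(d_{2n-1} \ge 1),
\]
has a tail term $\Pr(d_{2n-1} \ge 1)$ for which there is no ``trivial bound'': it is precisely the singularity probability of a $(2n-1) \times (2n-1)$ symmetric matrix, which a priori is of the same order as the quantity you are trying to bound. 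Likewise, your treatment of the bad case is circular: you bound $\Pr(d_m = 1, d_{m-1} = 2)$ by $\Pr(d_{m-2} \ge 1)$, which is again the singularity probability at a nearby index and cannot be re-expanded without either re-introducing the same tail or blowing up the prefactor. Nothing in the argument as written makes the recursion converge.

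The key ingredient you are missing is genuinely probabilistic, not deterministic: Nguyen's Lemma~2.1 (Lemma~\ref{lem:Ngu:10} in the paper), which relies on Odlyzko's observation that a $k$-dimensional subspace of $\F_p^n$ meets $\{-1,1\}^n$ in at most $2^k$ points. This gives $\Pr\big(\rank(M_{n+1}) = k+2 \mid \rank(M_n) = k\big) \ge 1 - 2^{k-n}$, i.e.\ the rank \emph{typically} increases by $2$ when a row and column are appended, and iterating this yields $\Pr(\rank(M_n) = k) \le 2\Pr(\rank(M_{2n-k-1}) = 2n-k-2)$. This is what lets one pass from an arbitrary rank deficiency to a deficiency of exactly one, with only a constant-factor loss, and it is what makes the sum terminate inside the range $[n,2n-2]$. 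The second ingredient is Nguyen's Lemma~2.3 (Lemma~\ref{lem:Ngu:remove}), a clean deterministic statement: if $\rank(M_m) = m-1$, then \emph{some} principal $(m-1)$-minor has rank at least $m-2$, which by symmetry converts $\Pr(\rank(M_m) = m-1)$ into $m$ times the probability of the target event $B_m$. Your attempted bad-case analysis is trying to reproduce this, but the correct route is via the maximum over deleted indices rather than a recursive bound. Without the Odlyzko-based probabilistic gain, the bookkeeping you flag as ``the main obstacle'' is not merely delicate — it cannot close.
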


The proof of Lemma~\ref{lem:red1} is given in Section~\ref{subsec:reductions}. The next two lemmas deal with the cases $\rank(M_{n-1}) = n - 2$ and $\rank(M_{n-1}) = n - 1$ respectively; the first is more straightforward. 

\begin{lemma}\label{lem:rank2}
For every $n \in \N$, prime $p > 2$, and $\beta > 0$, 
$$\Pr\Big( \big\{ \rank(M_{n}) = n-1 \big\} \cap \big\{ \rank(M_{n-1}) = n-2 \big\} \Big) \le \beta + q_{n-1}(\beta).$$
\end{lemma}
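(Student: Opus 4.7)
Write $M_n$ in block form as
$$M_n = \begin{pmatrix} m_{11} & X^T \\ X & M_{n-1} \end{pmatrix},$$
where $X \in \{-1,1\}^{n-1}$ is the first column of $M_n$ with its first entry removed. Since $M_n$ is symmetric with independent uniform $\pm 1$ entries on and above the diagonal, the scalar $m_{11}$, the vector $X$, and the matrix $M_{n-1}$ are mutually independent. My plan is to expose $M_{n-1}$ and $X$ while keeping $m_{11}$ hidden, and to show that the event in question forces a strong linear-algebraic constraint between $X$ and the kernel of $M_{n-1}$.

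The main observation is the following. Suppose $\rank(M_{n-1}) = n-2$, so that $\ker(M_{n-1})$ is one-dimensional; fix any nonzero $v \in \ker(M_{n-1})$. If additionally $\rank(M_n) \le n-1$, then there exists a nonzero $u = (u_0, u') \in \F_p \times \F_p^{n-1}$ with $M_n u = 0$, i.e.\ $m_{11}u_0 + X^Tu' = 0$ and $Xu_0 + M_{n-1}u' = 0$. If $u_0 = 0$, then $u' \in \ker(M_{n-1}) = \langle v \rangle$ is nonzero and $X^T u' = 0$, forcing $X^T v = 0$. If $u_0 \ne 0$, then $X \in \operatorname{Im}(M_{n-1})$, and since $M_{n-1}$ is symmetric, $\operatorname{Im}(M_{n-1}) = \ker(M_{n-1})^{\perp} = v^{\perp}$, again forcing $X^T v = 0$. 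Thus in both cases the event under consideration implies the event
$$\cE := \big\{ \rank(M_{n-1}) = n-2 \big\} \cap \big\{ X^T v = 0 \big\},$$
where $v = v(M_{n-1})$ is any fixed nonzero kernel vector (well-defined up to a scalar, which does not affect either $\rho(v)$ or the condition $X^T v = 0$).

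I will then split $\cE$ according to whether $\rho(v) < \beta$ or $\rho(v) \ge \beta$. In the first regime, condition on $M_{n-1}$: since $X$ is independent of $M_{n-1}$ and $v$ is nonzero, the definition of $\rho$ gives
$$\Pr\big( X^T v = 0 \,\big|\, M_{n-1} \big) \le \rho(v) < \beta,$$
so this contribution is at most $\beta$. In the second regime, the event requires the existence of a nonzero $v \in \F_p^{n-1}$ with $M_{n-1} v = 0$ and $\rho(v) \ge \beta$; by the definition~\eqref{app:qdef} applied with $w = 0 \in \F_p^{n-1}$, this has probability at most $q_{n-1}(\beta)$. Adding the two contributions gives the claimed bound $\beta + q_{n-1}(\beta)$.

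The only delicate point is the case analysis establishing $X^T v = 0$; once that is in place, the probability split is routine. No deeper input (and in particular no appeal to Theorem~\ref{thm:containers}) is needed here, since the rank-$(n-2)$ hypothesis pins the kernel down to a single line.
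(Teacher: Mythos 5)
Your proof is correct and follows the same overall strategy as the paper: reduce the event to the bilinear constraint $X^T v = 0$ between the first column (minus its top entry) and a kernel vector of $M_{n-1}$, then split according to whether $\rho(v)$ is below or above $\beta$, pricing the two regimes by $\beta$ and $q_{n-1}(\beta)$ respectively. Where you diverge is in how the constraint is established. The paper invokes Nguyen's lemma (Lemma~\ref{lem:ngured}), which uses the rank-one, symmetric adjugate $\adj(M_{n-1})$ and the cofactor expansion of $\det(M_n)$ to produce the factorization $0 = \big(\sum_{i \ge 2} a_i x_i\big)^2$. Your derivation is a direct kernel/image argument: case $u_0 = 0$ immediately puts $u'$ on the line $\ker(M_{n-1}) = \langle v\rangle$, and case $u_0 \ne 0$ forces $X \in \operatorname{Im}(M_{n-1}) = \ker(M_{n-1})^\perp$, using that $M_{n-1}$ is symmetric over $\F_p$ so that the image is the annihilator of the kernel (equality by dimension count, since the standard bilinear form on $\F_p^{n-1}$ is non-degenerate). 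Both routes lean crucially on symmetry and yield the identical constraint; yours is arguably more transparent and avoids introducing the adjugate, while the paper's presentation is the standard one inherited from Nguyen and Costello--Tao--Vu and sets up the determinant-expansion machinery that becomes essential in the harder rank-$(n-1)$ case of Lemma~\ref{lem:rank1}.
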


The proof of Lemma~\ref{lem:rank2}, which follows that given in~\cite[Section 2.2]{FeJ}, is described in Section~\ref{subsec:n2}. Finally, the following lemma deals with the case $\rank(M_{n-1}) = n-1$.

\begin{lemma}\label{lem:rank1}
For every $n \in \N$, prime $p > 2$, $\beta  > 0$, and integer $1 \le k \le n - 2$, we have
$$\Pr\Big( \rank(M_{n}) = \rank(M_{n-1}) = n-1 \Big) \le \, 2 \cdot \big( 2^k \beta + 2^{-k} \big)^{1/4} + 3^{k+1} q_{n-1}(\beta).$$
\end{lemma}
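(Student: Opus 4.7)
The plan is to reduce the event to a concentration question for the quadratic form $u^T M_{n-1}^{-1} u$ via the Schur complement, peel off a first $q_{n-1}(\beta)$ contribution using the independence of $u$ from $M_{n-1}$, and then iterate a Costello--Tao--Vu-style decoupling $k$ times.

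\textbf{Schur complement and first dichotomy.} Exploiting the symmetry of $M_n$, I would decompose it as a scalar $m_{11}$, a vector $u \in \{-1,1\}^{n-1}$ (the first row of $M_n$ to the right of $m_{11}$, which by symmetry coincides with the first column below $m_{11}$), and the symmetric $(n-1)\times(n-1)$ block $M_{n-1}$; these three pieces are mutually independent. On the event $E := \{\rank(M_n) = \rank(M_{n-1}) = n-1\}$, the block $M_{n-1}$ is invertible over $\F_p$, and the Schur identity $\det(M_n) = \det(M_{n-1})\bigl(m_{11} - u^T M_{n-1}^{-1} u\bigr)$ forces $m_{11} = u^T W$ with $W := M_{n-1}^{-1} u \in \F_p^{n-1}$; in particular $\ker M_n$ is spanned by $(1,-W)^T$ over $\F_p$. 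I then split $E$ according to whether $\rho(W) \ge \beta$. In the first case, writing the event as $\{\exists\, v \in \F_p^{n-1}\setminus\{0\}: M_{n-1} v = u,\ \rho(v) \ge \beta\}$ and conditioning on $u$ (independent of $M_{n-1}$), the definition of $q_{n-1}(\beta)$ yields $\Pr\bigl(E \cap \{\rho(W) \ge \beta\}\bigr) \le q_{n-1}(\beta)$.

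\textbf{Iterative decoupling.} For the complementary event $\{\rho(W) < \beta\} \cap E$, the remaining task is to bound $\Pr(u^T M_{n-1}^{-1} u = m_{11})$ under the a priori hypothesis $\rho(W) < \beta$. I would iterate the following decoupling step $k$ times: introduce an independent copy of a random subset of the entries of $u$ (or of an auxiliary $\pm 1$ vector), apply Cauchy--Schwarz so that the square of the current probability is dominated by the concentration of a bilinear form, then randomly partition the active coordinates and condition on one half, reducing to a linear Littlewood--Offord statement about $M_{n-1}^{-1}$ acting on the conditioned half. At each level the analysis branches into a bounded number of sub-cases (nominally three): two "structured" sub-cases in which a freshly exposed random vector satisfies $\rho \ge \beta$ and is absorbed into a $q_{n-1}(\beta)$ contribution (using the independence of the newly introduced randomness), and one "unstructured" sub-case passed on to the next level. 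After $k+1$ iterations the accumulated $q$-contribution is bounded by $3^{k+1} q_{n-1}(\beta)$, while the base-case probability (the quadratic Littlewood--Offord-type estimate applied at the bottom of the recursion) is bounded by $2(2^k\beta + 2^{-k})^{1/4}$: the $2^k\beta$ tracks the amplification of the hypothesis $\rho(W) < \beta$ through the successive Cauchy--Schwarz steps, the $2^{-k}$ tracks the halving of relevant supports through the random partitions, and the fourth-root exponent arises from the two successive squarings needed to pass from a quadratic to a linear concentration statement.

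\textbf{Main obstacle.} The principal difficulty is the decoupling step, since $M_{n-1}^{-1}$ is a highly dependent random object whose individual rows cannot easily be controlled. The resolution is to replace parts of $u$ (or of the matrix) by independent copies, produce bilinear forms via Cauchy--Schwarz, and argue case-by-case that either a freshly exposed vector has $\rho \ge \beta$ (and can be absorbed into $q_{n-1}(\beta)$) or the problem reduces to a strictly smaller instance. Carefully choosing the conditioning so that the relevant independence is preserved at every level of the recursion, and balancing the $3^{k+1}$ branching cost against the $2^k\beta + 2^{-k}$ quantitative gain so that optimising $k$ recovers $\beta^{1/8}$ (matching Lemma~\ref{lem:sufftoshow}), is the main technical burden.
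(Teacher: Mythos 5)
Your high-level intuition is right --- Schur complement, a decoupling step (Lemma~4.7 of Costello--Tao--Vu), and a structured/unstructured dichotomy feeding into $q_{n-1}(\beta)$ are exactly the ingredients --- but the mechanism you describe does not match the bound, and two of the moves would not work as stated.

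First, there is no $k$-fold iteration of decoupling. The decoupling lemma is applied \emph{once}, and the exponent $1/4$ is simply the exponent that a single application of Lemma~\ref{lem:decouple} produces. If you iterated Cauchy--Schwarz or decoupling $k$ (or $k+1$) times, the resulting exponent would decay like $2^{-k}$ or $4^{-k}$, not remain pinned at $1/4$; your own explanation of the $1/4$ as ``two successive squarings'' is in tension with your claim to iterate $k$ times. The parameter $k$ enters instead as the size of a fixed coordinate set $J \subset [n-1]$ in a single partition $I \cup J = [n-1]$: decoupling is carried out along this partition, producing the vector $z = M_{n-1}^{-1} w^*_J$ where $w = u-u' \in \{-2,0,2\}^{n-1}$ and $w^*_J$ is its restriction to $J$. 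The three powers of $k$ each arise from $|J| = k$ in a single pass, not from recursion: $3^k$ is a union bound over the $3^k$ possible vectors $w_J^* \in \{-2,0,2\}^J$ when defining the structured event; $2^k\beta$ comes from the coordinate-deletion inequality $\rho(z_I) \le 2^{|J|}\rho(z)$ (Lemma~\ref{lem:rhobound}); and $2^{-k}$ is the probability that $w_J = 0$, i.e.\ that $z = 0$, in which case the anticoncentration estimate is vacuous.

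Second, the ordering of your dichotomy versus decoupling is backwards in a way that matters. You split first on whether $\rho(W) \ge \beta$ for $W = M_{n-1}^{-1}u$, and only afterwards invoke decoupling in the unstructured case. But $W$ depends on $u$, and decoupling replaces $u$ by fresh independent copies; the event $\{\rho(W) < \beta\}$ is not stable under such replacements, so the conditioning leaks. The fix in the actual proof is that the structured event $\cU_\beta^{(J)}$ is a condition on $M_{n-1}$ \emph{alone} (it asks that every $v$ with $M_{n-1}v \in W(J) := \{-2,0,2\}^J \times \{0\}^{I}$ has $\rho(v) \le \beta$), and the decoupling inequality is applied \emph{before} the dichotomy, conditional on $M_{n-1}$, so that all the $u$-randomness is already in play when the structured/unstructured split is made. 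Your split, as written, would have to be repaired along these lines before the rest of the argument can go through.
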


The proof of Lemma~\ref{lem:rank1}, which is similar to that given in~\cite[Section~2.3]{FeJ}, is provided in Section~\ref{subsec:n1}. Combining Lemmas~\ref{lem:red1},~\ref{lem:rank2} and \ref{lem:rank1}, we obtain Lemma~\ref{lem:sufftoshow}.

}

\medskip
\pagebreak

{\setstretch{1.15}

\begin{proof}[Proof of Lemma~\ref{lem:sufftoshow}]
Observe first that $q_n(\beta) \ge 2^{-n}$ for every $\beta < 1/2$ (to see this, set $v = (1,0,\ldots,0)$), so the claimed bound holds trivially if $\beta > n^{-1}$ or $\beta < 2^{-n}$. We may therefore assume that $k := \lfloor \log_4(1/\beta) \rfloor \in [n-2]$, and therefore, by Lemmas~\ref{lem:red1},~\ref{lem:rank2} and~\ref{lem:rank1}, we obtain
\begin{align*}
\Pr\big( \det(A_n) = 0 \big) & \, \le \, 4n \sum_{m = n}^{2n-2} \Big( \beta + q_{m-1}(\beta) + 2 \cdot \big( 3 \beta^{1/2} \big)^{1/4} + \beta^{-1} q_{m-1}(\beta) \Big)\\
& \, \le \, 16n \sum_{m = n-1}^{2n-3} \bigg( \beta^{1/8} + \frac{q_m(\beta)}{\beta} \bigg).
\end{align*}
as required.
\end{proof}

\subsection{The proof of Lemma~\ref{lem:red1}}\label{subsec:reductions}

As noted above, Lemma~\ref{lem:red1} is a straightforward consequence of~\cite[Lemmas~2.1 and~2.3]{N12}. The first of these two lemmas is as follows.

\begin{lemma}[Lemma~2.1 of~\cite{N12}]\label{lem:Ngu:10}
For any $0 \le k \le n - 1$,
\begin{equation}\label{eq:rank}
\Pr\big( \rank(M_n) = k \big) \le 2 \cdot \Pr\big( \rank(M_{2n-k-1}) = 2n-k-2 \big).
\end{equation}
\end{lemma}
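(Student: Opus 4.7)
The plan is to couple $M_n$ with a larger matrix $M_N$, where $N := 2n - k - 1$, by viewing $M_n$ as the top-left $n \times n$ principal submatrix of $M_N$. Setting $c := n - k$ (the target corank of $M_n$), note that $N - n = c - 1$, so $M_N$ is obtained from $M_n$ by appending exactly $c - 1$ extra symmetric rows/columns. The estimate~\eqref{eq:rank} would follow from the conditional inequality
$$\Pr\big( \rank(M_N) = N - 1 \,\big|\, \rank(M_n) = k \big) \,\ge\, \tfrac12,$$
which says that, conditional on $M_n$ having corank $c$, we go all the way down to corank $1$ as we reveal the remaining $c - 1$ rows/columns.

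To control this, reveal the extra rows and columns one at a time, producing a nested sequence $M_n \subset M_{n+1} \subset \cdots \subset M_N$, where $M_{m+1}$ is obtained from $M_m$ by appending a column $b \in \{-1,1\}^m$, its transpose, and a single diagonal entry $x \in \{-1,1\}$. A direct block-matrix computation (analyzing $\ker(M_{m+1})$ by splitting on whether the last coordinate vanishes) shows that
$$\text{corank}(M_{m+1}) - \text{corank}(M_m) \in \{-1,0,+1\},$$
and, crucially, the difference equals $-1$ if and only if $b \notin \ker(M_m)^\perp \subseteq \F_p^m$ (where the orthogonal complement is taken with respect to the standard bilinear form; recall $\ker(M_m)^\perp = \text{col}(M_m)$ since $M_m$ is symmetric). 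This event depends only on $b$ and not on $x$, which is essential for the steps to decouple cleanly. Since we have to go from corank $c$ to corank $1$ in exactly $c - 1$ steps, success requires a ``$(-1)$-step'' at every single stage.

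To bound the probability of a $(-1)$-step, I would use the following Odlyzko-type estimate over $\F_p$: for any subspace $V \subseteq \F_p^m$ of dimension $d$,
$$\big| V \cap \{-1,1\}^m \big| \,\le\, 2^d.$$
The proof is elementary: put a basis of $V$ into reduced row-echelon form, which identifies $d$ pivot coordinates $I \subseteq [m]$, and observe that the coordinate projection $V \to \F_p^I$ is an isomorphism on $V$; hence restricting to $\pm 1$ vectors yields an injection $V \cap \{-1,1\}^m \hookrightarrow \{-1,1\}^I$. Applying this to $V = \ker(M_m)^\perp$, which has dimension $m - c_m$ where $c_m$ is the current corank, gives $\Pr(b \in \ker(M_m)^\perp \,|\, M_m) \le 2^{-c_m}$, so a $(-1)$-step happens with conditional probability at least $1 - 2^{-c_m}$.

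Chaining these bounds along the $c - 1$ steps (at which the corank values visited are $c, c-1, \ldots, 2$) yields
$$\Pr\big( \rank(M_N) = N - 1 \,\big|\, \rank(M_n) = k \big) \,\ge\, \prod_{c' = 2}^{c} \big( 1 - 2^{-c'} \big) \,\ge\, \prod_{j=2}^{\infty} \big( 1 - 2^{-j} \big) \,>\, \tfrac12,$$
as required. The main technical point I expect to be careful about is the block-matrix case analysis classifying the three possible corank transitions, and in particular verifying that the $(-1)$-step is determined purely by $b$; this independence from $x$ (and from earlier diagonal entries) is what makes the iterated conditioning go through without additional bookkeeping. The Odlyzko-type estimate itself is essentially a one-line linear-algebra exercise once the statement is phrased correctly over $\F_p$.
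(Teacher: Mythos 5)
Your proposal is correct and is essentially the same argument as the paper's: both add one row/column at a time, observe (via the Odlyzko bound over $\F_p$, Observation~\ref{obs:odl}) that the new $\pm1$ row lies in the column space of the current matrix with probability at most $2^{-c_m}$ where $c_m$ is the current corank, conclude that the corank drops by $1$ at each step with probability at least $1-2^{-c_m}$, and chain these to obtain $\prod_{j\ge 2}(1-2^{-j})>\tfrac12$. The only cosmetic differences are that you phrase the step in terms of corank and $\ker(M_m)^\perp$ rather than rank and the row/column span, you spell out the full three-way case analysis (including the role of the diagonal entry $x$, which the paper leaves implicit since it only needs the one direction), and you append rows at the bottom rather than the top.
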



To prove Lemma~\ref{lem:Ngu:10} we will need the following observation of Odlyzko~\cite{Odl}; since it is usually stated in $\R^n$, we provide the short proof.

\begin{obs}\label{obs:odl}
Let $V$ be a subspace of $\F_p^n$ of dimension at most $k$. Then 
$$\big| V \cap \{ -1, 1 \}^n \big| \le 2^k.$$
\end{obs}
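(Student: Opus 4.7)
The plan is to use the projection argument originally due to Odlyzko. Without loss of generality, assume $V$ has dimension exactly $d \le k$; otherwise we just bound by a smaller quantity. The key idea is to find a coordinate projection that is injective on $V$, and then observe that restricting this projection to $V \cap \{-1,1\}^n$ yields an injection into a set of size $2^d$.

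First I would choose coordinates $i_1,\ldots,i_d \in [n]$ such that the projection map $\pi \colon V \to \F_p^d$, given by $\pi(v) = (v_{i_1},\ldots,v_{i_d})$, is a linear isomorphism. Such coordinates exist because any $d$-dimensional subspace of $\F_p^n$ contains a basis whose coordinate representation, viewed as a $d \times n$ matrix, has rank $d$; hence some $d$ of its columns are linearly independent, and those columns correspond to the desired coordinates. (This standard fact holds over any field.)

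Next I would observe that $-1 \ne 1$ in $\F_p$ since $p > 2$, so $\{-1,1\}^n$ is a well-defined subset of $\F_p^n$ of size $2^n$, and the image $\pi(V \cap \{-1,1\}^n)$ is contained in $\{-1,1\}^d \subset \F_p^d$, simply because $\pi$ just reads off $d$ coordinates of its input. Since $\pi$ is injective on the whole of $V$, it is in particular injective on the subset $V \cap \{-1,1\}^n$, so
\[
\big| V \cap \{-1,1\}^n \big| \,=\, \big| \pi\big( V \cap \{-1,1\}^n \big) \big| \,\le\, \big|\{-1,1\}^d\big| \,=\, 2^d \,\le\, 2^k.
\]

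There is no real obstacle here: the argument is a one-line linear algebra observation, and the only minor point requiring care is that we are working over $\F_p$ rather than $\R$, which is handled by noting that $1$ and $-1$ remain distinct in $\F_p$ when $p > 2$.
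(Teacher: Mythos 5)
Your proof is correct and is essentially the same argument as the paper's: the paper selects $k$ linearly independent rows of the basis matrix and notes that the values in $\{-1,1\}^k$ at those rows determine the vector uniquely, which is exactly your coordinate projection being an isomorphism onto $\F_p^d$. The phrasing differs slightly (you speak of an injective projection, the paper of solving $Ax = b$), but the content is identical.
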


\begin{proof}
Form an $n \times k$ matrix over $\F_p$ whose columns are a basis $\{v^{(1)},\ldots,v^{(k)}\}$ of $V$, and choose $k$ linearly independent rows. We obtain an invertible matrix $A$, and so for each $b \in \{-1,1\}^k$, there is a unique solution in $\F_p^k$ to the set of equations $A x = b$. The $2^k$ vectors $\sum_{i=1}^k x_i v^{(i)}$ (one for each $b \in \{-1,1\}^k$) are the only possible elements of $V \cap \{ -1, 1 \}^n$.
\end{proof}
 
We can now prove Lemma~\ref{lem:Ngu:10}.

\begin{proof}[Proof of Lemma~\ref{lem:Ngu:10}]
We claim that, for any $0 \le k \le n - 1$, 
\begin{equation}\label{eq:rankit}
\Pr\big( \rank(M_{n+1}) = k + 2 \;\big|\; \rank(M_n) = k  \big) \ge 1 - 2^{k - n}.
\end{equation}
where we remind the reader that $M_n$ is obtained from $M_{n+1}$ by removing the first row and column. Let $W$ be the subspace spanned by the rows of $M_n$, and note that, by Observation~\ref{obs:odl}, if $\rank(M_n) = k$ then $W$ intersects $\{-1,1\}^n$ in at most $2^{k}$ vectors. 

Let $v \in \F_p^n$ be the vector formed by removing the first element from the first row of $M_{n+1}$. By the remarks above, it follows that $\Pr(v \notin W) \ge 1 - 2^{k-n}$. We claim that if $v \not\in W$ then $\rank(M_{n+1}) = k + 2$. To see this, note first that if $v \not\in W$ then the rank of the final $n$ columns of $M_{n+1}$ is $k+1$. Now, since $M_{n+1}$ is symmetric, the first column of $M_{n+1}$ is the same as the first row, and if $v \not\in W$ then $v$ is not in the span of the columns of $M_n$. It follows that $\rank(M_{n+1}) = k+2$, as claimed, and~\eqref{eq:rankit} follows.

It follows immediately from~\eqref{eq:rankit} that  
$$\Pr\big( \rank(M_{n+t}) = k + 2t \;\big|\; \rank(M_{n+ t-1}) = k + 2t-2) \big) \ge 1 - 2^{k + t - n - 1}$$
for every $k \ge 0$ and $1 \le t \le n-k$. Now, building $M_{n+t}$ from $M_n$ by adding one row and column at a time, it follows that
$$\Pr\big( \rank(M_{2n-k-1}) = 2n - k -2 \;\big|\; \rank(M_n) = k \big) \ge  \prod_{i = 2}^{n-k}(1 - 2^{-i}) \ge \frac{1}{2},$$
which implies~\eqref{eq:rank}.
\end{proof}

We can now deduce Lemma~\ref{lem:red1} using~\cite[Lemma~2.3]{N12}, which is the following observation. Let us write $M_n^{(i)}$ for the (symmetric) matrix obtained from $M_n$ by removing the $i$th row and the $i$th column. 

\begin{lemma}[Lemma~2.3 of~\cite{N12}]\label{lem:Ngu:remove}
If\/ $\rank(M_n) = n-1$, then\/ $\max_{i \in [n]} \rank(M_n^{(i)}) \ge n - 2$.
\end{lemma}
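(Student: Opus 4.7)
The plan is to exploit the symmetry of $M_n$ together with the fact that its corank is exactly one. Since $\rank(M_n) = n-1$, the kernel of $M_n$ is one-dimensional; let $u \in \F_p^n \setminus \{0\}$ be a generator. By symmetry of $M_n$, the transpose has the same kernel, so $u^T M_n = 0$, i.e., the rows of $M_n$ satisfy the single linear dependence $\sum_{j=1}^n u_j \cdot \mathrm{row}_j(M_n) = 0$.

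Next I would choose an index $i \in [n]$ with $u_i \neq 0$, which exists because $u \neq 0$. This choice ensures that $\mathrm{row}_i(M_n)$ is expressible as a linear combination of the remaining rows, so deleting row $i$ preserves the row space and hence the rank: the $(n-1) \times n$ matrix $M'$ obtained from $M_n$ by removing row $i$ still has rank $n-1$.

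Finally, to obtain $M_n^{(i)}$ from $M'$ one deletes the $i$th column. Since deleting a single column decreases rank by at most $1$, we conclude $\rank(M_n^{(i)}) \geq (n-1) - 1 = n-2$, which gives the claim (with $i$ as the maximising index).

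The main step to carry out carefully is the first one: verifying that symmetry of $M_n$ together with the one-dimensional kernel implies that the coefficients of the unique (up to scalar) row dependence are precisely the entries of $u$, so that an index $i$ with $u_i \neq 0$ really does correspond to a row that is a combination of the others. Once that is in place, the rank arithmetic (row deletion preserves rank because the deleted row lies in the span of the rest; column deletion drops rank by at most one) is routine and the bound follows immediately, with no obstacle of substance.
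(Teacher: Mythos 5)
Your proof is correct and follows essentially the same route as the paper's: identify a row of $M_n$ lying in the span of the others, delete it to obtain an $(n-1)\times n$ matrix of rank $n-1$, then delete the matching column and lose at most one from the rank. The only cosmetic difference is that you locate the redundant row by passing through the kernel vector $u$ and appealing to the symmetry of $M_n$, whereas the paper simply chooses $n-1$ linearly independent rows and discards the remaining one; symmetry is not actually needed for this step, since any $n\times n$ matrix of rank $n-1$ has some row in the span of the others, but invoking it does no harm.
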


\begin{proof}
Choose $n-1$ rows of $M_n$ whose span has dimension $n-1$, and remove the remaining row, giving an $(n-1) \times n$ matrix of rank $n-1$. Hence, removing any column from this matrix, we obtain a matrix of rank at least $n-2$. 
\end{proof}

\begin{proof}[Proof of Lemma~\ref{lem:red1}]
By Lemma~\ref{lem:Ngu:10}, we have
\begin{equation}\label{eq:rkbound}
\Pr\big( \det(M_n) = 0 \big) = \, \sum_{k = 1}^{n-1}\Pr\big( \rank(M_n) = k \big) \le \, 2 \sum_{k = 1}^{n-1} \Pr\big( \rank(M_{2n - k - 1}) = 2n - k - 2 \big).
\end{equation}
We therefore need to bound $\Pr\big( \rank(M_m) = m - 1 \big)$ for each $n \le m \le 2n - 2$. By Lemma~\ref{lem:Ngu:remove}, and by symmetry, we have
\begin{align*}
\Pr\big( \rank(M_m) = m-1) & \, \le \, \sum_{i = 1}^m \Pr\Big( \big\{ \rank(M_m) = m - 1 \big\} \cap \big\{ \rank(M_m^{(i)}) \ge m - 2 \big\} \Big) \\
& \, \le \, m \cdot \Pr\Big( \big\{ \rank(M_m) = m - 1 \big\} \cap \big\{ \rank(M_{m-1}) \in \{m - 2,m-1\} \big\} \Big).
\end{align*}
Combining this with \eqref{eq:rkbound} gives the statement of the lemma.
\end{proof}

\subsection{The case $\rank(M_{n-1}) = n-2$}\label{subsec:n2}

In this subsection we will prove Lemma~\ref{lem:rank2}, following the presentation in~\cite[Section~2.2]{FeJ}. Let us write $\adj(M)$ for the \emph{adjugate} of a matrix $M$ over $\F_p$. 
We will need the following lemma of Nguyen~\cite{N12}, see~\cite[Lemma~2.5]{FeJ}. 

\begin{lemma}\label{lem:ngured}
If $\rank(M_{n-1}) = n-2$, then there exists a non-trivial column $a \in \F_p^{n-1}$ of $\adj(M_{n-1})$ such that
\begin{itemize}
\item[$(a)$] $M_{n-1} \cdot a = 0$, and 
\item[$(b)$] if $\det(M_n) = 0$, then $\sum_{i=2}^{n} a_i x_i = 0$,
\end{itemize}
where $a = (a_2,\ldots,a_n)$, and $(x_1,\ldots, x_n)$ is the first row of $M_n$. 
\end{lemma}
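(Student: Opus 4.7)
The plan is to extract $a$ from the adjugate of $M_{n-1}$ and then, for part (b), split into cases based on the structure of a kernel vector of $M_n$, using the symmetry of $M_{n-1}$ in one of the cases.

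First I would set up notation: write $M_n$ in block form as
\[
M_n = \begin{pmatrix} x_1 & y^T \\ y & M_{n-1} \end{pmatrix}, \qquad y = (x_2,\ldots,x_n)^T,
\]
so $\sum_{i=2}^n a_i x_i = a^T y$ in the notation of the lemma. Since $\rank(M_{n-1}) = n-2$, some $(n-2)\times (n-2)$ minor of $M_{n-1}$ is non-zero, hence $\adj(M_{n-1})$ has at least one non-zero entry, and therefore at least one non-trivial column $a$. Moreover, from the identity $M_{n-1} \cdot \adj(M_{n-1}) = \det(M_{n-1}) \cdot I = 0$, every column of $\adj(M_{n-1})$ lies in $\ker(M_{n-1})$, so for any non-trivial column $a$ of $\adj(M_{n-1})$ property (a) holds automatically. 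In fact, since $\rank(M_{n-1}) = n-2$, the kernel $\ker(M_{n-1})$ is one-dimensional, so $a$ spans it.

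Next I would prove (b). Assume $\det(M_n) = 0$ and fix a non-zero vector $v = (v_1, v') \in \F_p \times \F_p^{n-1}$ with $M_n \cdot v = 0$. Reading off the first row and the remaining rows of $M_n$ gives
\[
v_1 x_1 + y^T v' = 0 \qquad \text{and} \qquad v_1 y + M_{n-1} v' = 0.
\]
Split on $v_1$. If $v_1 = 0$, then $M_{n-1} v' = 0$, so $v'$ spans $\ker(M_{n-1})$ and is thus a non-zero scalar multiple of $a$; the first equation then gives $y^T v' = 0$, hence $a^T y = 0$, which is what we want. If $v_1 \neq 0$ then, after rescaling, $y = -M_{n-1} v'$, and here is where I would use symmetry: since $M_{n-1}$ is symmetric, $a \in \ker(M_{n-1})$ also lies in the \emph{left} kernel, so
\[
a^T y = -a^T M_{n-1} v' = -(M_{n-1} a)^T v' = 0,
\]
again giving $\sum_{i=2}^n a_i x_i = 0$.

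There is no real obstacle here; the only subtlety is remembering to invoke the symmetry of $M_{n-1}$ to handle the $v_1 \neq 0$ case, since without symmetry one would only get $a \in \ker(M_{n-1})$ but not $a \in \ker(M_{n-1}^T)$. The existence of a non-trivial column of $\adj(M_{n-1})$ also relies crucially on $\rank(M_{n-1}) = n-2$ being exactly $(n-1)-1$ rather than smaller (otherwise all $(n-2)\times(n-2)$ minors would vanish and $\adj(M_{n-1})$ would be zero); this is the one place where the precise rank hypothesis is used.
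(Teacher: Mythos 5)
Your proof is correct, but it takes a genuinely different route from the paper's for part $(b)$. The paper follows Nguyen's original argument: it invokes the cofactor-expansion identity $\det(M_n) = x_1 \det(M_{n-1}) - \sum_{2 \le i,j \le n} c_{ij} x_i x_j$, where $c_{ij}$ are the entries of $\adj(M_{n-1})$, together with the observation that $\adj(M_{n-1})$ is a symmetric rank-one matrix and hence factors as $c_{ij} = \lambda a_i a_j$; the hypothesis $\det(M_n) = \det(M_{n-1}) = 0$ then yields $\big(\sum_i a_i x_i\big)^2 = 0$, and $(b)$ follows since $\F_p$ has no zero divisors. You instead pick a kernel vector $v = (v_1,v')$ of $M_n$, read off the block equations, and split on whether $v_1$ vanishes: when $v_1 = 0$ you use that $\ker(M_{n-1})$ is one-dimensional, and when $v_1 \ne 0$ you use the symmetry of $M_{n-1}$ to pass from $a \in \ker(M_{n-1})$ to $a^T M_{n-1} = 0$. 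Both arguments are sound and both exploit the symmetry of $M_{n-1}$ at some point, but yours avoids the cofactor-expansion identity and the rank-one factorisation of the adjugate entirely, which is arguably the more elementary path; the paper's quadratic-form identity is the one that recurs throughout the Costello--Tao--Vu/Nguyen framework in which this lemma originates, so the paper's presentation keeps that thread visible.
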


\begin{proof}
Recall (see, e.g.,~\cite[page~22]{HJ}) that if $\rank(M_{n-1}) = n - 2$, then 
$$M_{n-1} \cdot \adj(M_{n-1}) = 0 \qquad \text{and} \qquad \rank\big( \adj(M_{n-1}) \big) = 1.$$
It follows that there exists a non-trivial column vector $a$ of $\adj(M_{n-1})$, and $M_{n-1} \cdot a = 0$.

To show that property $(b)$ holds, recall that, since $M_n$ is symmetric, 
$$\det(M_n) = x_1 \det(M_{n-1}) - \sum_{2 \le i,j \le n} c_{ij} x_i x_j,$$ 
where $c_{ij}$ are the entries of $\adj(M_{n-1})$. Since $\adj(M_{n-1})$ is a symmetric matrix of rank~$1$, its entries can be written in the form $c_{ij} = \lambda a_i a_j$ for some $\lambda \in \F_p \setminus \{ 0\}$. Hence
\begin{equation}\label{eq:factor}
0 = \sum_{2 \le i,j \le n}a_i a_j x_i x_j = \bigg( \sum_{2 \le i\le n}a_i x_i \bigg)^2,
\end{equation}
since $\det(M_{n-1}) = \det(M_n) = 0$, as required.
\end{proof}

We now use Lemma~\ref{lem:ngured} to deduce Lemma~\ref{lem:rank2}, cf.~\cite[Section~2.2]{FeJ}.

\begin{proof}[Proof of Lemma~\ref{lem:rank2}]
By Lemma~\ref{lem:ngured}, it follows that in order to bound the probability that $\rank(M_n) = n-1$ and $\rank(M_{n-1}) = n-2$, it suffices to bound the probability that there exists a vector $a \in \F_p^{n-1} \setminus \{0\}$ (unique up to a constant factor) with $M_{n-1} \cdot a = 0$ and $a \cdot x = 0$, where $x \in \{-1,1\}^{n-1}$ is a random vector chosen uniformly and independent of $M_{n-1}$.

We will partition this event into `structured' and `unstructured' cases, using the event
$$\mathcal{U}_{\beta} := \big\{ \rho(v) \le \beta \text{ for every vector $v \in \F_p^{n-1} \setminus \{0\}$ with $M_{n-1}\cdot v = 0$} \big\}.$$
Observe first that, for any $M_{n-1} \in \cU_\beta$, and any $a \in \F_p^{n-1} \setminus \{0\}$ with $M_{n-1} \cdot a = 0$, we have
$$\Pr\big( a \cdot x = 0 \,\big|\, M_{n-1} \big) \le \beta,$$
and hence
$$\Pr\Big( \big\{ \rank(M_{n}) = n-1 \big\} \cap \big\{ \rank(M_{n-1}) = n-2 \big\} \cap \cU_\beta \Big) \le \beta.$$
On the other hand, by the definition~\eqref{app:qdef} of $q_n(\beta)$, we have
$$\Pr\big( \cU^c_\beta \big) = \Pr\big( \exists \, v \in \F_p^{n-1}\setminus \{0\} : M_{n-1} \cdot v = 0 \text{ and } \rho(v) > \beta \big) \le q_{n-1}(\beta).$$
It follows that
$$\Pr\Big( \big\{ \rank(M_{n}) = n-1 \big\} \cap \big\{ \rank(M_{n-1}) = n-2 \big\} \Big) \le \beta + q_{n-1}(\beta),$$
as required.
\end{proof}

\subsection{The case $\rank(M_{n-1}) = n-1$}\label{subsec:n1}

It only remains to prove Lemma~\ref{lem:rank1}. The strategy is similar to that used in the previous subsection (in particular, we will split our event into a `structured' case and an `unstructured' case), but now it is trickier to relate our event to $q_n(\beta)$, as we do not have the simple factorisation of the determinant used in Lemma~\ref{lem:ngured}. Instead, we will apply the following `decoupling' lemma of Costello, Tao and Vu~\cite{CTV}. 

\begin{lemma}[Lemma~4.7 of~\cite{CTV}]\label{lem:decouple}
Let $X$ and $Y$ be independent random variables, and let $\cE(X,Y)$ be an event that depends on $X$ and $Y$. Then
$$\Pr\big( \cE(X,Y) \big) \le \Big( \Pr\big( \cE(X,Y) \cap \cE(X',Y) \cap \cE(X,Y') \cap \cE(X',Y') \big) \Big)^{1/4},$$
where $X'$ and $Y'$ are independent copies of $X$ and $Y$.
\end{lemma}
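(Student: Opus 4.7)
The plan is to prove Lemma~\ref{lem:decouple} by applying the Cauchy--Schwarz inequality (equivalently, Jensen's inequality for the convex function $t \mapsto t^2$) twice: once conditionally on $Y$, and then again conditionally on the pair $(X, X')$. Each application of Cauchy--Schwarz squares the probability of the event on the left-hand side while introducing an independent copy of one of the random variables on the right-hand side, and after two applications all four desired copies appear.

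To be more precise, first write
$$\Pr\big( \cE(X,Y) \big) = \Ex_Y\Big[ \Pr\big( \cE(X,Y) \,\big|\, Y \big) \Big].$$
Applying Cauchy--Schwarz to the outer expectation and using that $X$ and $X'$ are conditionally independent given $Y$ (since $X, X', Y$ are mutually independent), I would recognise that
$$\Pr\big( \cE(X,Y) \,\big|\, Y \big)^2 = \Pr\big( \cE(X,Y) \cap \cE(X',Y) \,\big|\, Y \big),$$
which yields, after taking the expectation over $Y$, the intermediate bound
$$\Pr\big( \cE(X,Y) \big)^2 \le \Pr\big( \cE(X,Y) \cap \cE(X',Y) \big).$$

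Next I would repeat the argument, this time conditioning on $(X, X')$ and introducing an independent copy $Y'$ of $Y$. Writing
$$\Pr\big( \cE(X,Y) \cap \cE(X',Y) \big) = \Ex_{X,X'}\Big[ \Pr\big( \cE(X,Y) \cap \cE(X',Y) \,\big|\, X,X' \big) \Big]$$
and applying Cauchy--Schwarz again, the square of the inner conditional probability equals the conditional probability of the four-fold intersection $\cE(X,Y) \cap \cE(X',Y) \cap \cE(X,Y') \cap \cE(X',Y')$, since $Y$ and $Y'$ are conditionally independent given $(X, X')$. This gives
$$\Pr\big( \cE(X,Y) \cap \cE(X',Y) \big)^2 \le \Pr\big( \cE(X,Y) \cap \cE(X',Y) \cap \cE(X,Y') \cap \cE(X',Y') \big).$$
Combining the two inequalities produces $\Pr(\cE(X,Y))^4$ on the left and the four-fold intersection probability on the right; taking fourth roots yields the claimed bound.

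There is essentially no hard step here: the only subtlety worth double-checking is the identity $\Pr(\cE(X,Y)\mid Y)^2 = \Pr(\cE(X,Y) \cap \cE(X',Y)\mid Y)$ (and its analogue at the second stage), which depends on the fact that $X'$ is an \emph{independent} copy of $X$, so that conditionally on $Y$ the indicators $\mathbf{1}_{\cE(X,Y)}$ and $\mathbf{1}_{\cE(X',Y)}$ are independent with the same conditional distribution. Once this is in place, the proof is a direct two-step Cauchy--Schwarz calculation.
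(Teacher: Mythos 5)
Your proof is correct, and it is the standard two-step Cauchy--Schwarz (Jensen) decoupling argument: condition on $Y$, use that $X$ and $X'$ are conditionally i.i.d.\ given $Y$ to turn a square into a two-fold intersection, take expectations, then repeat conditioning on $(X,X')$ to bring in $Y'$. The paper does not actually prove the lemma here --- it cites Lemma~4.7 of~\cite{CTV} and offers only a remark that it is equivalent to the classical fact that a bipartite graph with parts of sizes $m,n$ and $cmn$ edges contains at least $c^4 m^2 n^2$ (possibly degenerate) copies of $C_4$, via encoding the event as a bipartite graph on the ranges of $X$ and $Y$. So your route is genuinely different in presentation: you give a direct, self-contained analytic argument that works for arbitrary random variables, whereas the paper points to a graph-theoretic reformulation that, taken literally, presumes discrete ranges (and would require an approximation argument in general). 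That said, the two are the same argument at heart --- the $C_4$ counting bound is itself proved by applying Cauchy--Schwarz twice --- so your version is best viewed as the underlying engine made explicit, trading the paper's appeal to a well-known combinatorial theorem for a cleaner and more general derivation. One small point worth making explicit in a polished write-up: ``independent copies'' must be read as saying that $X,X',Y,Y'$ are mutually independent, which is what licenses the identities $\Pr(\cE(X,Y)\mid Y)^2=\Pr(\cE(X,Y)\cap\cE(X',Y)\mid Y)$ and its second-stage analogue; you flag this correctly.
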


It was remarked in~\cite{CTV} that Lemma~\ref{lem:decouple} is equivalent to the classical fact (which was essentially proved by Erd\H{o}s~\cite{E38} in 1938) that a bipartite graph with parts of size $m$ and $n$ and $cmn$ edges contains at least $c^4 m^2 n^2$ (possibly degenerate) copies of $C_4$. Indeed, to deduce Lemma~\ref{lem:decouple} from this theorem, simply define a bipartite graph, each of whose vertices represents an element of the range of $X$ or $Y$, and whose edges encode the event $\cE$.

In order to state the key technical lemma that we will use to prove Lemma~\ref{lem:rank1}, we need a little notation. Given a vector $v \in \F_p^m$ and a set $J \subset [m]$, let $v_J \in \F_p^{|J|}$ denote the restriction of $v$ to the coordinates of $J$, and let $v^*_J$ be the vector in $\F_p^m$ whose $i$th coordinate is $v_i \cdot \mathbbm{1}[i \in J]$. Moreover, let $u,u' \in \{-1,1\}^{n-1}$ be chosen uniformly and independently at random, and define $w \in \{ -2, 0, 2\}^{n-1}$ by setting $w_i := u_i - u'_i$ for each $i \in [n-1]$.

The following lemma was essentially proved in~\cite[Section~4.6]{CTV} (see also~\cite[Section~2.3]{FeJ}).

\begin{lemma}\label{lem:dec}
For any non-trivial partition $I \cup J = [n - 1]$, we have
$$\Pr\big( \rank(M_n) = \rank(M_{n-1}) = n-1 \big) \le 2 \cdot \Ex\Big[ \max_{a \in \F_p} \Pr\big( z_I \cdot w_I = a \,\big|\, M_{n-1} \big)^{1/4} \mathbbm{1}\big[ \rank(M_{n-1}) = n-1 \big] \Big],$$
where $z := M_{n-1}^{-1} \cdot w^*_J$, and the expectation is over the choice of $M_{n-1}$. 
\end{lemma}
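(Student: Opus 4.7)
My plan is to combine the Schur complement identity with the Costello--Tao--Vu decoupling lemma (Lemma~\ref{lem:decouple}). I would begin by writing $M_n$ in block form, with $m_{11}$ in the top-left corner, $u \in \{-1,1\}^{n-1}$ the off-diagonal part of the first row (and, by symmetry, the first column), and $M_{n-1}$ the bottom-right $(n-1) \times (n-1)$ block. Whenever $M_{n-1}$ is invertible, the Schur complement identity gives
\[
\det(M_n) = \det(M_{n-1}) \cdot \bigl( m_{11} - u^T M_{n-1}^{-1} u \bigr),
\]
so the event $\{\rank(M_n) = \rank(M_{n-1}) = n-1\}$ coincides with $\{M_{n-1}\text{ is invertible}\} \cap \{m_{11} = u^T M_{n-1}^{-1} u\}$. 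Since $m_{11}$ is uniform on $\{\pm 1\}$ and independent of $(M_{n-1},u)$, averaging first over $m_{11}$ and then using a union bound over $\{-1,+1\}$ produces
\[
\Pr\bigl( m_{11} = u^T M_{n-1}^{-1} u \;\bigm|\; M_{n-1} \bigr) \le \max_{b \in \F_p} \Pr\bigl( u^T M_{n-1}^{-1} u = b \;\bigm|\; M_{n-1} \bigr),
\]
with room to spare; the factor of~$2$ in the lemma will emerge from this step.

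Next, for each fixed $b \in \F_p$ I would bound $\Pr(u^T M_{n-1}^{-1} u = b \mid M_{n-1})$ by decoupling. Introduce an independent copy $u'$ of $u$, decompose $u = u^*_I + u^*_J$, and let $\cE_b(X, Y)$ denote the event $\{u^T M_{n-1}^{-1} u = b\}$, viewed as a function of $X := u_I$ and $Y := u_J$. Lemma~\ref{lem:decouple} applied conditionally on $M_{n-1}$ bounds $\Pr(\cE_b(X, Y) \mid M_{n-1})^4$ by the joint probability that $\cE_b$ holds simultaneously for the four pairs $(X,Y), (X',Y), (X,Y'), (X',Y')$ built from $u$ and $u'$.

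The crux is an alternating-sum computation on the four resulting equations. Writing $M := M_{n-1}^{-1}$ and expanding via
\[
u^T M u = (u^*_I)^T M u^*_I + 2 (u^*_I)^T M u^*_J + (u^*_J)^T M u^*_J,
\]
together with the corresponding expansions for the other three pairs (using the symmetry of $M$), the combination $E_1 - E_2 - E_3 + E_4$ causes every purely $I$--$I$ and $J$--$J$ quadratic term (as well as the right-hand-side constants $b$) to cancel in pairs, leaving only $2 (w^*_I)^T M_{n-1}^{-1} w^*_J = 0$, where $w := u - u'$. Dividing by $2$ (valid as $p > 2$) and using $M_{n-1}^{-1} w^*_J = z$, this reduces exactly to $w_I \cdot z_I = 0$. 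Hence
\[
\Pr\bigl(\cE_b(X,Y) \;\bigm|\; M_{n-1}\bigr)^4 \le \Pr\bigl(w_I \cdot z_I = 0 \;\bigm|\; M_{n-1}\bigr) \le \max_{a \in \F_p} \Pr\bigl(w_I \cdot z_I = a \;\bigm|\; M_{n-1}\bigr),
\]
a bound uniform in $b$. Combining with the Schur-complement step and taking expectation over $M_{n-1}$ restricted to $\{\rank(M_{n-1}) = n-1\}$ delivers the lemma. The only step I expect to require real care is the alternating-sum bookkeeping: one must check that the diagonal quadratic terms $(u^*_I)^T M u^*_I$ and $(u^*_J)^T M u^*_J$ (and their $u'$-counterparts) cancel exactly, so that only the mixed bilinear term survives and the symmetry of $M_{n-1}$ can be put to use; everything else is routine manipulation.
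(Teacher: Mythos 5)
Your proof is correct and follows essentially the same strategy as the paper's: the Costello--Tao--Vu decoupling lemma (Lemma~\ref{lem:decouple}) applied conditionally on $M_{n-1}$, followed by the alternating-sum cancellation that kills the $I$--$I$ and $J$--$J$ diagonal blocks and leaves only the mixed bilinear term $2\,(w^*_I)^T M_{n-1}^{-1} w^*_J = 2\, z_I \cdot w_I$. The only differences are cosmetic: you phrase the reduction to $u^T M_{n-1}^{-1} u \in \{-1,1\}$ via the Schur complement rather than the paper's null-vector argument, and you take the union bound over $b \in \{-1,1\}$ \emph{before} decoupling (so each $\cE_b$ has a fixed right-hand side and the alternating sum of constants is exactly $0$) rather than \emph{after} (over the set $2D-2D$ as in the paper) --- this actually yields constant $1$ rather than $2$ once the $\tfrac12$ gain from $m_{11}$ is retained, so your aside that the factor of $2$ ``emerges from this step'' is a minor mis-statement, though harmless.
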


\begin{proof}
{\setstretch{1.13}
Let $X := (u_i)_{i \in I}$ and $Y := (u_i)_{i \in J}$ be random variables, and note that $u$ is determined by $X$ and $Y$. Now define, for each choice of $M_{n-1}$, an event 
$$\cE(X,Y) := \big\{ \exists \, v \in \F_p^{n-1} : M_{n-1} \cdot v = u \text{ and } u \cdot v \in \{-1,1\} \big\}$$
depending on $X$ and $Y$. We claim that if $\rank(M_{n-1}) = n-1$, and the first row of $M_n$ is $(x_1,u_1,\ldots,u_{n-1})$ for some $x_1 \in \{-1,1\}$, then
$$\big\{ \rank(M_n) = n-1 \big\} \Rightarrow \big\{ u \in \cE(X,Y) \big\}.$$
Indeed, since $\det(M_n) = 0 \ne \det(M_{n-1})$ there exists a vector $v \in \F_p^n$ such that $M_n \cdot v = 0$ and $v_1 = -1$. Letting $v' = (v_2,\ldots,v_n)$, we see that $M_{n-1} \cdot v' = u$ and $u \cdot v' \in \{-1,1\}$. 

Now, for each choice of $M_{n-1}$, define\footnote{Here we set $X' := (u'_i)_{i \in I}$ and $Y' := (u'_i)_{i \in J}$, so $X'$ and $Y'$ are independent copies of $X$ and $Y$.} 
$$\cE_1 := \cE(X,Y) \cap \cE(X',Y) \cap \cE(X,Y') \cap \cE(X',Y').$$
By Lemma~\ref{lem:decouple}, we have
$$\Pr\big( \cE(X,Y) \,\big|\, M_{n-1} \big) \le \Pr\big( \cE_1 \,\big|\, M_{n-1} \big)^{1/4},$$
and hence
\begin{align*}\label{eq:lemap}
\Pr\Big( \rank(M_n) = \rank(M_{n-1}) = n-1 \Big) & \, \le \, \Ex\Big[ \Pr\big( \cE(X,Y) \,\big|\, M_{n-1} \big) \mathbbm{1}\big[ \rank(M_{n-1}) = n-1 \big] \Big] \\
& \, \le \, \Ex\Big[ \Pr\big( \cE_1 \,\big|\, M_{n-1} \big)^{1/4} \mathbbm{1}\big[ \rank(M_{n-1}) = n-1 \big] \Big],
\end{align*}
where the expectation is over the choice of $M_{n-1}$. 

To complete the proof of the lemma, it will therefore suffice to show that
\begin{equation}\label{eq:prob:E1:16}
\Pr\big( \cE_1 \,\big|\, M_{n-1} \big) \le 16 \cdot \max_{a \in \F_p} \Pr\big( z_I \cdot w_I = a \,\big|\, M_{n-1} \big)
\end{equation}
for all $M_{n-1}$ with $\rank(M_{n-1}) = n-1$. To prove~\eqref{eq:prob:E1:16}, let us fix $M_{n-1}$ (arbitrarily among those with $\rank(M_{n-1}) = n-1$) and set $A := M_{n-1}^{-1}$ and $D := \{-1, 1\}$. We claim that if $u \in \cE(X,Y)$, then $u^T A u \in D$. To see this, simply observe that 
$$u^T A u = u^T A \cdot M_{n-1} v = u^T v \in \{-1, 1\} = D.$$ 
Recalling that $u = u(X,Y)$, define $f(X,Y) := u^T A u$, and observe that if $\cE_1$ holds, then 
\begin{equation}\label{eq:ustar}
f(X,Y) - f(X',Y) - f(X,Y') + f(X',Y') \in 2D - 2D,
\end{equation}
by the observation above. We claim that the left-hand side of~\eqref{eq:ustar} is equal to $2 z_I \cdot w_I$. To see this, note that 
$$f(X,Y) = u^T A u = \sum_{1 \le i,j \le n-1}A_{ij} u_iu_j,$$
}and (abusing notation) let us write $f(X,Y)_{ij} := A_{ij} u_iu_j$. Now, observe that if $i,j \in I$, then $f(X,Y)_{ij} = f(X,Y')_{ij}$ and $f(X',Y)_{ij} = f(X',Y')_{ij}$, and therefore 
$$\sum_{i,j \in I} f(X,Y)_{ij} - f(X',Y)_{ij} - f(X,Y')_{ij} + f(X',Y)_{ij} = 0.$$
Similarly, if $i,j \in J$ then $f(X,Y)_{ij} = f(X',Y)_{ij}$ and $f(X,Y')_{ij} = f(X',Y')_{ij}$, and hence
$$f(X,Y) - f(X',Y) - f(X,Y') + f(X',Y') = 2 \sum_{i \in I}\sum_{ j \in J}A_{ij}(u_i - u_i')(u_j - u_j').$$
Recalling that $w = u - u'$ and $z_i := \sum_{j \in J} A_{ij} w_j$, it follows that
\begin{equation*}\label{eq:zw}
    z_I \cdot w_I = \sum_{i \in I} (u_i - u_i') \sum_{j \in J} A_{ij}(u_j - u_j'),
\end{equation*}
so the left-hand side of~\eqref{eq:ustar} is equal to $2 z_I \cdot w_I$, as claimed. Since $|D| = 2$, it follows that
$$\Pr\big( \cE_1 \,\big|\, M_{n-1} \big) \le 16 \cdot \max_{a \in \F_p} \Pr\big( z_I \cdot w_I = a \,\big|\, M_{n-1} \big),$$
as claimed. As noted above, this completes the proof of the lemma.
\end{proof}

In the proof of Lemma~\ref{lem:rank1} we will need the following variant of $\rho(v)$. For any $n \in \N$ and $v \in \F_p^n$, define
$$\rho_{1/2}(v):= \max_{a \in \F_p} \Pr\big( u_1 v_1 + \cdots + u_n v_n = a \big),$$
where $u_1,\ldots,u_n$ are iid random variables taking the value $0$ with probability $1/2$, and the values $\pm 1$ each with probability $1/4$. We will need the following simple inequalities. 

\begin{lemma}[Lemma~2.8 and~2.9 of~\cite{FeJ}]\label{lem:rhobound}
For any $v \in \F_p^n$, and any partition $I \cup J = [n]$,
$$\rho_{1/2}(v) \le \rho(v) \qquad \text{and} \qquad \rho(v) \le \rho(v_I) \le 2^{|J|} \rho(v).$$
\end{lemma}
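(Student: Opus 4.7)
The three inequalities in Lemma~\ref{lem:rhobound} are each straightforward consequences of the definitions of $\rho$ and $\rho_{1/2}$, and I would prove them in the order stated.

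For the first inequality $\rho_{1/2}(v) \le \rho(v)$, the plan is to use the coupling $u_i = (s_i + t_i)/2$, where $s,t \in \{-1,1\}^n$ are independent and uniform. In $\F_p$ with $p > 2$ this is well-defined (since $2$ is invertible), and one checks immediately that each $u_i$ takes value $0$ with probability $1/2$ and $\pm 1$ each with probability $1/4$, so $u$ has the correct distribution. The equation $\sum_i u_i v_i = a$ is then equivalent to $\sum_i s_i v_i + \sum_i t_i v_i = 2a$, and conditioning on the value of $\sum_i s_i v_i$ and applying the definition of $\rho(v)$ to the $t$-sum gives $\Pr\bigl( \sum_i u_i v_i = a \bigr) \le \rho(v)$; taking the maximum over $a$ yields the claim.

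For the middle inequality $\rho(v) \le \rho(v_I)$, I would simply condition on the random signs $(u_j)_{j \in J}$. For any fixed outcome of $(u_j)_{j \in J}$, the event $\sum_{i \in [n]} u_i v_i = a$ becomes a shifted event on $(u_i)_{i \in I}$ of the form $\sum_{i \in I} u_i v_i = a'$, which has probability at most $\rho(v_I)$ by definition; averaging over $(u_j)_{j \in J}$ preserves the bound, and taking the maximum over $a$ completes the argument.

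The third inequality $\rho(v_I) \le 2^{|J|} \rho(v)$ is the only one that uses a pigeonhole-style observation. Choose $b \in \F_p$ maximising $\Pr\bigl( \sum_{j \in J} u_j v_j = b \bigr)$; since $(u_j)_{j \in J}$ is uniform over $2^{|J|}$ equally likely sign patterns, this maximum is at least $2^{-|J|}$. By independence of $(u_i)_{i \in I}$ and $(u_j)_{j \in J}$,
\[ \Pr\bigl( \textstyle\sum_{i \in I} u_i v_i = a \bigr) \cdot \Pr\bigl( \textstyle\sum_{j \in J} u_j v_j = b \bigr) \, = \, \Pr\bigl( \textstyle\sum_{i \in [n]} u_i v_i = a + b \bigr) \, \le \, \rho(v), \]
and rearranging gives the stated bound. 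None of the three steps presents any real obstacle; the only mildly clever ingredient is the $(s+t)/2$ coupling in the first inequality, which relies on the assumption $p > 2$.
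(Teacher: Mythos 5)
Your proof is correct and takes essentially the same approach as the paper's (conditioning on $u_J$ for the middle inequality, the $(s+t)/2$ coupling for the first, which the paper phrases via $\rho_{1/2}(v) = \rho(v \oplus v)$, and restricting to a favourable sign pattern on $J$ for the third). One small slip worth fixing: in your display for the third inequality, the first relation should be $\le$ rather than $=$, since the product of the two probabilities equals $\Pr\bigl( \sum_{i \in I} u_i v_i = a \text{ and } \sum_{j \in J} u_j v_j = b\bigr)$, which is merely a lower bound for $\Pr\bigl(\sum_{i \in [n]} u_i v_i = a+b\bigr)$; the inequality points the right way, so the conclusion is unaffected.
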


\begin{proof}
Observe first that 
$$\rho(v) \le \sum_{w \in \{ -1, 1 \}^{|J|}} \Pr\big( u_J = w \big) \cdot \max_{a \in \F_p} \Pr\big( u_I \cdot v_I = a - w \cdot v_J \,\big|\, u_J = w \big) \, \le \, \rho (v_I).$$
Since $\rho_{1/2}(v) = \rho(v \oplus v)$, it follows that $\rho_{1/2}(v) \le \rho(v)$. Finally, if $a \in \F_p$ maximises $\Pr\big( u_I \cdot v_I = a \big)$, then 
$$\rho(v_I) = 2^{|J|} \cdot \Pr\big( u_I \cdot v_I = a \big) \prod_{j \in J} \Pr\big( u_j =1 \big) \le 2^{|J|} \cdot \Pr\bigg( u \cdot v = a + \sum \limits_{j \in J} v_j \bigg) \le 2^{|J|} \cdot \rho(v),$$
as claimed.
\end{proof}

We are now ready to prove our final lemma, cf.~\cite[Section~2.3]{FeJ}.

\begin{proof}[Proof of Lemma~\ref{lem:rank1}]
Recall that $1 \le k \le n - 2$, and let $J \subset [n-1]$ with $|J| = k$. By Lemma~\ref{lem:dec}, it will suffice to show that
$$\Ex\Big[ \max_{a \in \F_p} \Pr\big( z_I \cdot w_I = a \,\big|\, M_{n-1} \big)^{1/4} \mathbbm{1}\big[ \rank(M_{n-1}) = n-1 \big] \Big] \le \big( 2^{|J|} \beta + 2^{-|J|} \big)^{1/4} + 3^{|J|} q_{n-1}(\beta),$$
where $I = [n] \setminus J$ and $z = M_{n-1}^{-1} \cdot w^*_J$ is defined whenever $\rank(M_{n-1}) = n-1$. Recall that $w \in \{ -2, 0, 2\}^{n-1}$, and observe that therefore $M_{n-1} \cdot z = w_J^* \in W(J)$, where 
$$W(J) := \big\{ v \in \{-2,0,2\}^{n-1} : v_j = 0 \text{ for all } j \not\in J \big\}.$$
We will use the following event to partition into cases:
$$\cU_\beta^{(J)} := \Big\{ \rho(v) \le \beta \text{ for every vector $v \in \F_p^{n-1} \setminus \{0\}$ such that $M_{n-1}\cdot v \in W(J)$} \Big\}.$$ 
We will bound the expectation above using the following three claims. 

\bigskip
\noindent \textbf{Claim 1:} $\Pr\big( M_{n-1} \not\in \cU_\beta^{(J)} \big) \le 3^{|J|} q_{n-1}(\beta)$.

\begin{proof}[Proof of Claim~1]
If $\cU_\beta^{(J)}$ does not hold for $M_{n-1}$, then there exists a vector $v \in \F_p^{n-1} \setminus \{0\}$ such that $M_{n-1}\cdot v \in W(J)$ and $\rho(v) 
 > \beta$. 
For each individual vector $w \in W(J)$, the probability that this holds with $M_{n-1}\cdot v = w$ is at most $q_{n-1}(\beta)$, by~\eqref{app:qdef}. Hence, summing over $w \in W(J)$, and noting that $|W(J)| = 3^{|J|}$, the claim follows.
\end{proof}

\medskip
\noindent \textbf{Claim 2:} If $\rank(M_{n-1}) = n-1$, then $\Pr\big( z = 0 \,\big|\, M_{n-1} \big) \le 2^{-|J|}$.

\begin{proof}[Proof of Claim~2]
If $z = 0$ then $w_J^* = M_{n-1} \cdot z = 0$. Since $w_i = 0$ occurs with probability $1/2$ for each $i \in J$, and these events are independent, the claim follows immediately. 
\end{proof}

\medskip
\noindent \textbf{Claim 3:} If $M_{n-1} \in \cU_\beta^{(J)}$ and $\rank(M_{n-1}) = n-1$, then 
$$\ds\max_{a \in \F_p} \Pr\Big( \big\{ z_I \cdot w_I = a \big\} \cap \big\{ z \ne 0 \big\} \;\big|\; M_{n-1} \Big) \le 2^{|J|} \beta.$$

\begin{proof}[Proof of Claim~3]
Recall that $w_J$ and $M_{n-1}$ together determine $z$, and that the entries of $w_I$ are independent of $w_J$, and observe that $\rho_{1/2}(z_I) = \max_{a \in \F_p} \Pr\big( z_I \cdot w_I = a \big)$. Therefore
$$\Pr\Big( \big\{ z_I \cdot w_I = a \big\} \cap \big\{ z \ne 0 \big\} \,\big|\, M_{n-1} \Big) \le \Ex\Big[ \rho_{1/2}(z_I) \mathbbm{1}\big[ z \ne 0 \big] \;\big|\; M_{n-1} \Big]$$
for every $a \in \F_p$, where the expectation is over the choice of $w_J$. Now, by Lemma~\ref{lem:rhobound},
$$\rho_{1/2}(z_I) \le \rho(z_I) \le 2^{|J|} \rho(z).$$
Since $M_{n-1} \in \cU_\beta^{(J)}$ and $M_{n-1} \cdot z = w_J^* \in W(J)$, if $z \ne 0$ then $\rho(z) \le \beta$. It follows that
$$\Ex\Big[ \rho_{1/2}(z_I) \mathbbm{1}\big[ z \ne 0 \big] \;\big|\; M_{n-1} \Big] \le 2^{|J|} \beta,$$
as claimed.
\end{proof}

By Claims~1,~2 and~3, it follows that 
$$\Ex\Big[ \max_{a \in \F_p} \Pr\big( z_I \cdot w_I = a \,\big|\, M_{n-1} \big)^{1/4} \mathbbm{1}\big[ \rank(M_{n-1}) = n-1 \big] \Big] \le \big( 2^{|J|} \beta + 2^{-|J|} \big)^{1/4} + 3^{|J|} q_{n-1}(\beta),$$
and, as noted above, this completes the proof of Lemma~\ref{lem:rank1}.
\end{proof}

As shown in Section~\ref{app:overview:sec}, this completes the proof of Lemma~\ref{lem:sufftoshow}.

\section{Hal\'{a}sz's Anticoncentration Lemma}\label{app:Halasz}


In this appendix we will provide, for completeness, a proof of Lemma~\ref{lem:halasz}, which is due to Hal\'asz~\cite{H77}. Let us fix a prime $p > 3$ and an integer $n \in \N$; the first step is the following bound on $\rho(v)$. Recall that $\| \cdot \|$ denotes the distance to the nearest integer.

\begin{lemma}\label{lem:H:first:bound}
For every $v \in \Z_p^n$, 
\begin{equation}\label{eq:H:first:bound}
\rho (v) \le \dfrac{1}{p} \cdot \sum \limits_{k \in \Z_p} \exp \left (-  \sum \limits_{j = 1}^{n}  \left \|\dfrac{k \cdot v_j}{p} \right \|^2 \right ).
\end{equation}
\end{lemma}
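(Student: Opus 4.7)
My plan is to use Fourier inversion on $\Z_p$, combined with an elementary pointwise bound on $|\cos(\pi\cdot)|$ and a change of variable that exploits the oddness of $p$.

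First, since $u_1,\ldots,u_n$ are iid uniform on $\{-1,+1\}$, the characteristic function of $X := \sum_j u_j v_j \in \Z_p$ is
$$\hat f(k) := \Ex\bigl[e^{2\pi i k X/p}\bigr] = \prod_{j=1}^{n}\cos\!\left(\tfrac{2\pi k v_j}{p}\right),$$
by independence of the $u_j$. Fourier inversion on $\Z_p$ gives $\Pr(X=a) = \frac{1}{p}\sum_{k}e^{-2\pi i ka/p}\hat f(k)$, so by the triangle inequality
$$\rho(v) \le \frac{1}{p}\sum_{k \in \Z_p}\prod_{j=1}^{n}\bigl|\cos(2\pi k v_j/p)\bigr|.$$

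The second step is to establish the elementary inequality $|\cos(\pi t)| \le \exp\bigl(-2\|t\|^2\bigr)$ for every $t\in \R$. Since both sides are even and $1$-periodic, this reduces to $\cos(\pi t) \le e^{-2t^2}$ for $t\in [0,1/2]$, which follows from $1-\cos(\pi t) = 2\sin^2(\pi t/2) \ge 2t^2$ (using $\sin x \ge 2x/\pi$ on $[0,\pi/2]$) together with $1-x\le e^{-x}$. Applying this inequality with $t = 2 k v_j / p$ to each factor in the product above yields
$$\prod_{j=1}^{n}\bigl|\cos(2\pi k v_j/p)\bigr| \le \exp\!\left(-2\sum_{j=1}^{n}\left\|\tfrac{2 k v_j}{p}\right\|^{2}\right).$$

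Finally, because $p$ is an odd prime, $k\mapsto 2k$ is a bijection of $\Z_p$, and the value $\|k v_j/p\|$ depends only on $k v_j \bmod p$. Substituting $k' = 2k \bmod p$ in the sum over $\Z_p$ therefore turns each $\|2 k v_j/p\|$ into $\|k' v_j/p\|$, and using $e^{-2x}\le e^{-x}$ for $x\ge 0$ gives the claimed bound. The proof has no genuinely hard step; the only point that requires a small trick is that the naive pointwise inequality $|\cos(2\pi y)|\le e^{-\|y\|^2}$ is \emph{false} (it is essentially equal to $1$ when $y$ is close to $1/2$, while $e^{-\|y\|^2}$ is strictly less than $1$ there), which forces one to first ``double the frequency'' via $\cos(2\pi y)=\cos(\pi\cdot 2y)$, apply the $|\cos(\pi\cdot)|$ bound, and only then absorb the extra factor of $2$ in the argument via the $k\mapsto 2k$ bijection on $\Z_p$.
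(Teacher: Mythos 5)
Your proof is correct and follows essentially the same approach as the paper: Fourier inversion on $\Z_p$, the pointwise bound $|\cos(\pi t)|\le e^{-c\|t\|^2}$, and the $k\mapsto 2k$ bijection to absorb the doubled frequency (you apply the cosine bound before the change of variable, the paper after, but these are the same idea). Your remark that the naive $|\cos(2\pi y)|\le e^{-\|y\|^2}$ fails near $y=1/2$ correctly identifies the one subtlety, and your stronger constant $e^{-2\|t\|^2}$ is a harmless overkill.
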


\begin{proof}
We need to bound, for each $a \in \Z_p$, the probability that $u \cdot v = a$, where $u$ is chosen uniformly at random from $\{-1,1\}^n$. The first step is to rewrite this probability as
$$\Pr\big( u \cdot v = a \big) = \dfrac{1}{p} \cdot \sum \limits_{k \in \Z_p} \Ex\bigg[ \exp\bigg( \frac{2 \pi i \cdot (u \cdot v - a) k}{p} \bigg) \bigg],$$
using the fact that $\sum_{k \in \Z_p} \exp\big( 2\pi i \cdot x k / p \big) = 0$ for every $x \in \Z_p \setminus \{0\}$. Now, noting that 
$$\Ex\bigg[ \exp\bigg( \frac{2 \pi i \cdot u_j v_j k}{p} \bigg) \bigg] = \dfrac{1}{2} \Big( e^{2 \pi i k v_j / p} + e^{- 2 \pi i k v_j / p} \Big) = \cos\bigg( \frac{2 \pi k \cdot v_j}{p} \bigg)$$
for each $k \in \Z_p$ and $j \in [n]$, and recalling that the $u_j$ are independent, it follows that
\begin{align*}
\Pr\big( u \cdot v = a \big) & \, = \, \dfrac{1}{p} \cdot \sum \limits_{k \in \Z_p} \exp\bigg( - \frac{2 \pi i \cdot a \cdot k}{p} \bigg) \prod_{j = 1}^{n} \cos\bigg( \frac{2 \pi k \cdot v_j}{p} \bigg)\\
& \, \le \, \dfrac{1}{p} \cdot \sum \limits_{k \in \Z_p} \prod_{j = 1}^{n} \bigg| \cos\bigg( \frac{\pi k \cdot v_j}{p} \bigg) \bigg|,
\end{align*}
where we used the fact that $\{ 2k : k \in \Z_p \} = \Z_p$. 

Finally, using the inequality $\big |\cos(\pi x/p) \big | \le \exp\big( - \| x/p \|^2 \big)$, we obtain
$$\rho(v) = \max_{a \in \Z_p} \Pr\big( u \cdot v = a \big) \le \dfrac{1}{p} \cdot \sum \limits_{k \in \Z_p} \exp\bigg( - \sum_{j = 1}^n \bigg\|\dfrac{k \cdot v_j}{p} \bigg\|^2 \bigg),$$
as claimed. 
\end{proof}

We next rewrite the right-hand side of~\eqref{eq:H:first:bound} in terms of the level sets $T_t(v)$. 

\begin{lemma}\label{lem:H:second:bound}
For every $v \in \Z_p^n \setminus \{0\}$ and $\ell \ge 1$, 
\begin{equation}\label{eq:H:second:bound}
\rho (v) \le \dfrac{1}{p} + \dfrac{e}{p} \sum_{t =1}^{\lceil \ell \rceil} e^{-t} \big| T_t(v) \big| + e^{-\ell}.
\end{equation}
\end{lemma}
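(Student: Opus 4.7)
The plan is to start from the bound provided by Lemma~\ref{lem:H:first:bound} and to analyse the sum on the right-hand side by decomposing $\Z_p$ according to the size of the exponent. Write $f(k) := \sum_{j=1}^n \|k \cdot v_j / p\|^2$, so that~\eqref{eq:H:first:bound} reads $\rho(v) \le p^{-1} \sum_{k \in \Z_p} e^{-f(k)}$, and note that the level sets are precisely $T_t(v) = \{k \in \Z_p : f(k) \le t\}$. Since $v \neq 0$ and $p$ is prime, $f(k) > 0$ for every $k \neq 0$, while $f(0) = 0$; hence the term $k = 0$ in the sum is exactly $1$ and supplies the $1/p$ summand on the right-hand side of~\eqref{eq:H:second:bound}.

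Next I would handle the remaining contribution $\sum_{k \neq 0} e^{-f(k)}$ by splitting into the ranges $f(k) \le \ell$ and $f(k) > \ell$. For $f(k) > \ell$, I would simply use $e^{-f(k)} < e^{-\ell}$ and the trivial bound $|\{k : f(k) > \ell\}| \le p$, giving a contribution of at most $p \cdot e^{-\ell}$, which after dividing by $p$ produces the additive $e^{-\ell}$ term in~\eqref{eq:H:second:bound}. For $f(k) \le \ell$, I would perform a layer-cake style decomposition: assign each such $k$ to the integer $t_k := \lceil f(k) \rceil \in \{1, \dots, \lceil \ell \rceil\}$, so that $e^{-f(k)} \le e^{-(t_k - 1)} = e \cdot e^{-t_k}$ and $k \in T_{t_k}(v)$. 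Grouping by the value of $t_k$ then yields
$$\sum_{k \neq 0,\, f(k) \le \ell} e^{-f(k)} \; \le \; e \sum_{t=1}^{\lceil \ell \rceil} e^{-t} \big|\{k \neq 0 : t-1 < f(k) \le t\}\big| \; \le \; e \sum_{t=1}^{\lceil \ell \rceil} e^{-t} |T_t(v)|,$$
which after division by $p$ is exactly the middle term in~\eqref{eq:H:second:bound}.

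Combining the three pieces gives the claimed inequality, so there is no real obstacle here: the whole argument is a one-line rewriting of Lemma~\ref{lem:H:first:bound} followed by a straightforward layer decomposition. The only small care needed is to separate the $k=0$ contribution (producing the bare $1/p$) from the other terms, and to observe that the rounding loss $e^{-(t_k-1)}$ versus $e^{-t_k}$ is what produces the harmless factor $e$ in front of the sum.
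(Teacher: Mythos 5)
Your proof is correct and takes essentially the same route as the paper: both start from Lemma~\ref{lem:H:first:bound}, isolate the $k=0$ contribution (giving $1/p$ since $T_0(v) = \{0\}$ when $v \neq 0$), decompose the remaining $k$ into the annuli $T_t(v) \setminus T_{t-1}(v)$ with the bound $e^{-f(k)} \le e\cdot e^{-t}$ for $k$ in the $t$-th shell, and dispose of the tail ($f(k) > \ell$, or equivalently $t > \lceil\ell\rceil$) by the trivial bound $e^{-\ell}\cdot|\Z_p|/p = e^{-\ell}$.
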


\begin{proof}
By Lemma~\ref{lem:H:first:bound} and the definition~\eqref{def:level:sets} of $T_t(v)$, we have
$$\rho(v)\le \dfrac{1}{p} \bigg( |T_0(v)| + \sum \limits_{t=1}^{n} \big | T_t(v) \setminus T_{t-1}(v) \big| \cdot e^{-(t-1)} \bigg).$$
Now observe that $T_0(v) =\{0\}$, since $v \ne 0$, and therefore
$$\rho(v) \, \le \, \dfrac{1}{p} + \dfrac{e}{p} \sum_{t =1}^{\lceil \ell \rceil} e^{-t} \big| T_t(v) \big| + e^{-\ell}$$
for any $\ell \ge 1$, as required. 
\end{proof}

In order to deduce Lemma~\ref{lem:halasz} from Lemma~\ref{lem:H:second:bound}, we will need the following simple lemma. 

\begin{lemma} \label{lem:msumsetofT}
For any $m \in \N$ and $t \ge 0$, and any vector $v \in \Z_p^n$, 
$$m \cdot T_t(v) \subset T_{m^2t}(v)$$
where $m \cdot T$ denotes the $m$-fold sumset of a set $T$.
\end{lemma}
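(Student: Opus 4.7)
The plan is to reduce the claim to two elementary inequalities applied coordinate-wise: the triangle inequality for $\|\cdot\|$ (distance to the nearest integer), and the Cauchy--Schwarz inequality. Fix elements $k_1,\dots,k_m \in T_t(v)$, and set $k := k_1 + \cdots + k_m$ (all arithmetic in $\Z_p$); the goal is to show that $k \in T_{m^2 t}(v)$, i.e.\ that $\sum_{i=1}^n \|k \cdot v_i / p\|^2 \le m^2 t$.

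First, for each fixed $i \in [n]$, I would observe that $\|\cdot\|$ satisfies the triangle inequality $\|x+y\| \le \|x\|+\|y\|$, since the distance to the nearest integer is a seminorm on $\R/\Z$. Since $k \cdot v_i / p \equiv \sum_{j=1}^m (k_j \cdot v_i)/p \pmod 1$, iterating this gives
\[
\bigg\|\frac{k \cdot v_i}{p}\bigg\| \le \sum_{j=1}^m \bigg\|\frac{k_j \cdot v_i}{p}\bigg\|.
\]
Next, squaring and applying Cauchy--Schwarz (or the power-mean inequality) on the right-hand side yields
\[
\bigg\|\frac{k \cdot v_i}{p}\bigg\|^2 \le m \sum_{j=1}^m \bigg\|\frac{k_j \cdot v_i}{p}\bigg\|^2.
\]

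Finally, summing over $i \in [n]$ and using the hypothesis $k_j \in T_t(v)$ for each $j$,
\[
\sum_{i=1}^n \bigg\|\frac{k \cdot v_i}{p}\bigg\|^2 \le m \sum_{j=1}^m \sum_{i=1}^n \bigg\|\frac{k_j \cdot v_i}{p}\bigg\|^2 \le m \cdot m \cdot t = m^2 t,
\]
which shows $k \in T_{m^2 t}(v)$ by the definition \eqref{def:level:sets}. There is no real obstacle here; the only point that deserves care is the remark that the multiplication $k \cdot v_i$ defined in the paper takes values in $\Z$, but since we ultimately take $\|\cdot/p\|$ the result only depends on the residue mod $p$, so passing to $k = k_1 + \cdots + k_m \in \Z_p$ is harmless.
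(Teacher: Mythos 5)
Your proof is correct and follows essentially the same route as the paper: coordinate-wise triangle inequality for $\|\cdot\|$, then the elementary inequality $(\sum_{j=1}^m x_j)^2 \le m\sum_{j=1}^m x_j^2$ (which the paper attributes to convexity and you to Cauchy--Schwarz/power-mean --- the same fact), then summing over coordinates. Your remark about the multiplication $k \cdot v_i$ landing in $\Z$ versus working mod $p$ is a sensible clarification but not a departure from the paper's argument.
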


\begin{proof}
For each $a_1,\ldots a_m \in T_t(v)$, we have
$$\sum_{k=1}^n \bigg\| \sum_{j=1}^{m} \frac{a_j \cdot v_k}{p} \bigg\|^2 \le \, \sum_{k=1}^n \bigg( \sum_{j=1}^{m} \bigg\| \frac{a_j \cdot v_k}{p} \bigg\| \bigg)^2 \le m \sum_{j=1}^m \sum_{k=1}^n \bigg\| \frac{a_j \cdot v_k}{p} \bigg\|^2 \le m^2 t$$
by the triangle inequality for $\|\cdot\|$, convexity, and the definition of $T_t(v)$. 
\end{proof}

Finally, we will need the Cauchy--Davenport theorem. 

\begin{lemma}\label{lem:cauchydavenport}
Let $m \in \N$, let $p$ be a prime, and let $A \subset \Z_p$ be such that $m \cdot A \ne \Z_p$. Then 
$$|m \cdot A| \ge  m|A| - m + 1.$$
\end{lemma}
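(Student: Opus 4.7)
The plan is to reduce this $m$-fold inequality to the classical two-set Cauchy--Davenport theorem by induction on $m$. I may assume $A$ is non-empty, since otherwise the right-hand side $m|A| - m + 1$ is non-positive and the inequality is vacuous. The base case $m = 1$ is immediate: $|1 \cdot A| = |A| = 1 \cdot |A| - 1 + 1$.

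For the inductive step, I first observe that the hypothesis $m \cdot A \ne \Z_p$ forces $(m-1) \cdot A \ne \Z_p$, because if $(m-1) \cdot A = \Z_p$ then, since $A$ is non-empty, $m \cdot A = A + \Z_p = \Z_p$, a contradiction. The induction hypothesis then yields $|(m-1) \cdot A| \ge (m-1)|A| - (m-2)$. Applying the classical two-set Cauchy--Davenport theorem (which asserts that for non-empty $X, Y \subset \Z_p$ with $p$ prime, $|X + Y| \ge \min(p,\, |X| + |Y| - 1)$) with $X = A$ and $Y = (m-1) \cdot A$, I obtain
$$|m \cdot A| \;=\; \bigl| A + (m-1) \cdot A \bigr| \;\ge\; \min\!\bigl(p,\; |A| + |(m-1) \cdot A| - 1 \bigr) \;\ge\; m|A| - m + 1,$$
where the final inequality uses the assumption $m \cdot A \ne \Z_p$ to exclude the value $p$ from the minimum.

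The only substantive ingredient is the two-set Cauchy--Davenport theorem itself, which is classical. Its standard proof goes via the Dyson $e$-transform: given a counter-example with $|B|$ minimal, one picks $e = a_0 - b_0$ for some $a_0 \in A$ and $b_0 \in B$ and sets $A_e := A \cup (B + e)$ and $B_e := B \cap (A - e)$. A direct check shows $A_e + B_e \subset A + B$ and $|A_e| + |B_e| = |A| + |B|$, and by choosing $e$ appropriately one can arrange $B_e \subsetneq B$ while keeping $B_e$ non-empty, contradicting minimality of $|B|$. The main obstacle in the overall argument is therefore this classical two-set inequality; once it is in hand, the $m$-fold version follows in a couple of lines by the induction above.
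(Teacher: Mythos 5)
Your proof is correct. Note that the paper itself gives no proof of Lemma~\ref{lem:cauchydavenport}, treating it as the classical ($m$-fold) Cauchy--Davenport theorem, so there is no paper argument to compare against. Your induction on $m$, reducing to the two-set Cauchy--Davenport inequality, is the standard derivation and is carried out cleanly: the non-emptiness reduction, the observation that $(m-1)\cdot A \ne \Z_p$ whenever $m \cdot A \ne \Z_p$ and $A \ne \emptyset$, the inductive estimate $|(m-1)\cdot A| \ge (m-1)|A| - (m-2)$, and the use of $m\cdot A \ne \Z_p$ to rule out the $p$ branch of the minimum are all correct. The sketch of the Dyson $e$-transform proof of the two-set case is accurate, though of course one could equally well black-box that classical ingredient.
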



We are now ready to prove Hal\'{a}sz's Anticoncentration Lemma.

\begin{proof}[Proof of Lemma~\ref{lem:halasz}]
Let $v \in \Z_p^n \setminus \{0\}$, and let $1 \le t \le \ell \le 2^{-6} |v|$. We claim first that $|T_\ell(v)| < p$. To see this, let $a$ be a uniformly-chosen random element of $\Z_p$, and note that for each fixed $k \in \Z_p \setminus \{0\}$ we have $\Pr\big( \| a \cdot k / p \| \ge 1/4 \big) > 1/4$, and therefore
\begin{equation}\label{eq:easy:expectation}
\Ex\bigg[ \sum_{i=1}^n \left\| \frac{a \cdot v_i}{p} \right\|^2 \bigg] > \frac{|v|}{2^6}.
\end{equation}
Since $\ell \le 2^{-6}|v|$, it follows that there exists $k \in \Z_p$ with $k \not\in T_\ell(v)$, as claimed. 

Now, by Lemma \ref{lem:msumsetofT}, applied with $m := \big\lfloor \sqrt{\ell/t} \big\rfloor \ge \sqrt{\ell}/(2\sqrt{t})$, and by the definitions of $T_t(v)$ and $|v|$, we have
$$|m \cdot T_t(v)| \le |T_{m^2 t}(v)| \le |T_\ell(v)|.$$
By the Cauchy--Davenport theorem, it follows that $|m \cdot T_t(v)| \ge m \big( |T_t(v)| - 1 \big)$, and hence
$$|T_t(v)| \le 1 + \dfrac{|T_\ell(v)|}{m} \le 1 + 2 \sqrt{\dfrac{t}{\ell}} \cdot |T_\ell(v)|.$$
Combining this with Lemma~\ref{lem:H:second:bound}, we obtain
$$\rho (v) \le \dfrac{3}{p} + \dfrac{2e}{p} \cdot \frac{|T_\ell(v)|}{\sqrt{\ell}} \sum_{t = 1}^{\lceil \ell \rceil} \sqrt{t} e^{-t} + e^{-\ell} \le \dfrac{3}{p} + \frac{4 |T_\ell(v)|}{p \sqrt{\ell}} + e^{-\ell},$$
as claimed.
\end{proof}

}


\begin{thebibliography}{99}

\bibitem{BMS} J.~Balogh, R.~Morris and W.~Samotij, Independent sets in hypergraphs, \emph{J. Amer. Math. Soc.}, \textbf{28} (2015), 669--709.

\bibitem{BMS18} J.~Balogh, R.~Morris and W.~Samotij, The method of hypergraph containers, \emph{Proc.~Int.~Cong.~Math.}, Rio de Janeiro, 2018, Vol.~3, 3045--3078. 

\bibitem{BVW} J.~Bourgain, V.~H.~Vu and P.~M.~Wood, On the singularity probability of discrete random matrices, \emph{J.~Funct.~Anal.}, \textbf{258} (2010), 559--603.


\bibitem{CTV} K.~P.~Costello, T.~Tao and V.~Vu, Random symmetric matrices are almost surely nonsingular, \emph{Duke Math.~J.}, \textbf{135} (2006), 395--413.

\bibitem{E38} P.~Erd\H{o}s, On sequences of integers no one of which divides the product of two others and on some related problems, \emph{Mitt. Forsch.-Inst. Math. Mech. Univ. Tomsk}, \textbf{2} (1938), 74--82.

\bibitem{E45} P.~Erd\H{o}s, On a lemma of Littlewood and Offord, \emph{Bull.~Amer.~Math.~Soc.}, \textbf{51} (1945), 898--902.

\bibitem{E66} C.G.~Esseen, On the Kolmogorov--Rogozin inequality for the concentration function, \emph{Z.~Wahrscheinlichkeitstheorie verw Gebiete}, \textbf{5} (1966), 210--216.


 \bibitem{FJLS} A.~Ferber, V.~Jain, K.~Luh, and W.~Samotij, On the counting problem in inverse Littlewood--Offord theory, arXiv:1904.10425.

\bibitem{FeJ} A.~Ferber and V.~Jain, Singularity of random symmetric matrices -- a combinatorial approach to improved bounds, \emph{Forum Math., Sigma}, \textbf{7} (2019), e22. 

\bibitem{FF88} P.~Frankl and Z. F\"uredi, Solution of the Littlewood--Offord problem in high dimensions, \emph{Ann. Math.}, \textbf{128} (1988), 259--270.

\bibitem{H77} G.~Hal\'asz, Estimates for the concentration function of combinatorial number theory
and probability, \emph{Period.~Math.~Hungar.}, \textbf{8} (1977), 197--211.

\bibitem{HJ} R.~A.~Horn and C.~R.~Johnson, Matrix Analysis (second edition), Cambridge University Press, 2013.

\bibitem{KKSz} J.~Kahn, J.~Koml\'os and E.~Szemer\'edi, On the probability that a random $\pm 1$ matrix is singular, \emph{J.~Amer.~Math.~Soc.}, \textbf{8} (1995), 223--240.

\bibitem{K67} J.~Koml\'os, On the determinant of $(0,1)$ matrices, \emph{Studia Sci.~Math.~Hungar.}, \textbf{2} (1967), 7--22.

\bibitem{LO43} J.~E.~Littlewood and A.~C.~Offord, On the number of real roots of a random algebraic equation III, Rec.~Math.~(Mat.~Sbornik) N.S., \textbf{12} (1943), 277--286.
 
\bibitem{N12} H.~H.~Nguyen, Inverse Littlewood--Offord problems and the singularity of random symmetric matrices, \emph{Duke Math.~J.}, \textbf{161} (2012), 545--586.

\bibitem{NV11} H.~H.~Nguyen and V.~H.~Vu, Optimal inverse Littlewood--Offord theorems, \emph{Adv.~Math.}, \textbf{226} (2011), 5298--5319. 

\bibitem{NV13} H.~H.~Nguyen and V.~H.~Vu, Small ball probability, inverse theorems, and applications, In:~Erd\H{o}s Centennial, pages 409--463. Springer, 2013.

\bibitem{Odl} A.~M.~Odlyzko, On subspaces spanned by random selections of $\pm 1$ vectors, \emph{J.~Combin.~Theory Ser.~A}, \textbf{47} (1988), 124--133.

\bibitem{R61} B.A.~Rogozin, On the increase of dispersion of sums of independent random variables, \emph{Teor.~Verojatnost.~i Primenen.}, \textbf{6} (1961), 106--108.



\bibitem{RV08} M.~Rudelson and R.~Vershynin, The Littlewood--Offord problem and invertibility of random matrices, \emph{Adv.~Math.}, \textbf{218} (2008), 600--633. 

\bibitem{RV09} M.~Rudelson and R.~Vershynin, Smallest singular value of a random rectangular matrix, \emph{Comm.~Pure Appl.~Math.}, \textbf{62} (2009), 1707--1739.

\bibitem{RV10} M.~Rudelson and R.~Vershynin, Non-asymptotic theory of random matrices: extreme singular values, \emph{Proc.~Int.~Cong.~Math.}, Hyderabad, 2010, Vol.~3, 1576--1602.

\bibitem{SS65} A.~S\'ark\H{o}zy and E.~Szemer\'edi, Uber ein Problem von Erd\H{o}s und Moser, \emph{Acta Arith.}, \textbf{11} (1965), 205--208.
 
\bibitem{ST} D.~Saxton and A.~Thomason, Hypergraph containers, \emph{Inv. Math.}, \textbf{201} (2015), 1--68.


\bibitem{TV07} T.~Tao and V.~Vu, On the singularity probability of random Bernoulli matrices, \emph{J.~Amer.~Math.~Soc.}, \textbf{20} (2007), 603--628.

\bibitem{TV09a} T.~Tao and V.~Vu, Inverse Littlewood--Offord theorems and the condition number of random discrete matrices, \emph{Ann.~Math.}, \textbf{169} (2009), 595--632.

\bibitem{TV09b} T.~Tao and V.~Vu, From the Littlewood--Offord problem to the circular law: universality
of the spectral distribution of random matrices, \emph{Bull. Amer. Math. Soc.}, \textbf{46} (2009), 377--396.


\bibitem{T18} K.~Tikhomirov, Singularity of random Bernoulli matrices, \emph{Ann.~Math.}, \textbf{191} (2020), 593--634.

\bibitem{V14} R. Vershynin, Invertibility of symmetric random matrices, \emph{Random Structures Algorithms}, \textbf{44} (2014), 135--182.

\bibitem{Vu14} V.~Vu, Combinatorial problems in random matrix theory, \emph{Proc.~Int.~Cong.~Math.}, Seoul, 2014, Vol.~4, 489--508.

\end{thebibliography}
\end{document}